\numberwithin{equation}{section}
\newcommand{\bfzero}{\boldsymbol{0}}
\newcommand{\bfa}{\boldsymbol{a}}
\newcommand{\bfb}{\boldsymbol{b}}
\newcommand{\bfx}{\boldsymbol{x}}
\newcommand{\bfu}{\boldsymbol{u}}
\newcommand{\bfv}{\boldsymbol{v}}
\newcommand{\bfw}{\boldsymbol{w}}
\newcommand{\bff}{\boldsymbol{f}}
\newcommand{\bfg}{\boldsymbol{g}}
\newcommand{\bfn}{\boldsymbol{n}}
\newcommand{\bfV}{\boldsymbol{V}}
\newcommand{\bfH}{\boldsymbol{H}}
\newcommand{\bfRM}{\boldsymbol{RM}}
\newcommand{\bfsig}{\boldsymbol{\sigma}}
\newcommand{\bfeps}{\boldsymbol{\epsilon}}
\newcommand{\bfpi}{\boldsymbol{\pi}}
\newcommand{\bfvarphi}{\boldsymbol{\varphi}}
\newcommand{\tn}{\vert\kern-0.25ex\vert\kern-0.25ex\vert}
\newcommand{\bndD}{{\partial \Omega_D}}
\newcommand{\bndN}{{\partial \Omega_N}}
\newcommand{\mcK}{\mathcal{K}}
\newcommand{\mcF}{\mathcal{F}}
\newcommand{\bft}{\boldsymbol{t}}
\newcommand{\bfP}{\boldsymbol{P}}
\newcommand{\bfp}{\boldsymbol{p}}
\newcommand{\bfy}{\boldsymbol{y}}
\newcommand{\mcP}{\mathcal{P}}
\newcommand{\IR}{\mathbb{R}}
\newtheorem{lem}{Lemma}[section]
\newtheorem{thm}{Theorem}[section]
\newtheorem{rem}{Remark}[section]
\newenvironment{proof}{\noindent \newline {\bf Proof.}}
{\hfill \mbox{\fbox{} } \newline}
\newcommand{\bbA}{{\bf A}}
\newcommand{\bbM}{{\bf M}}
\newcommand{\hatv}{\widehat{\bfv}}
\newcommand{\hatw}{\widehat{\bfw}}
\begin{document}

\title{\bf Cut Finite Element Methods\\ for Linear Elasticity Problems\thanks{This research was supported in part by 
the Swedish Foundation for Strategic Research Grant No.\ AM13-0029, 
the Swedish Research Council Grant No. 2013-4708, and 
the Swedish strategic research programme eSSENCE.}}

\date{}
\author[1]{Peter Hansbo\footnote{peter.hansbo@ju.se}}
\author[2]{Mats G. Larson\footnote{mats.larson@umu.se}}
\author[2]{Karl Larsson\footnote{karl.larsson@umu.se}}
\affil[1]{\it Department of Mechanical Engineering, J\"onk\"oping University, SE-551\,11 J\"onk\"oping, Sweden}
\affil[2]{\it Department of Mathematics and Mathematical Statistics, Ume{\aa}~University, SE-901\,87 Ume{\aa}, Sweden}
\maketitle

\abstract{We formulate a cut finite element method for linear elasticity 
based on higher order elements on a fixed background mesh. Key to the method 
is a stabilization term which provides control of the jumps in the derivatives 
of the finite element functions across faces in the vicinity of the boundary. 
We then develop the basic theoretical results including error estimates and 
estimates of the condition number of the mass and stiffness matrices. We apply 
the method to the standard displacement problem, the frequency response problem, 
and the eigenvalue problem. We present several numerical examples including 
studies of thin bending dominated structures relevant for engineering applications. 
Finally, we develop a cut finite element method for fibre reinforced materials 
where the fibres are modeled as a superposition of a truss and a Euler-Bernoulli
beam. The beam model leads to a fourth order problem which we discretize using the restriction of the bulk finite element space to the fibre together with a 
continuous/discontinuous finite element formulation. Here the bulk material 
stabilizes the problem and it is not necessary to add additional stabilization 
terms.
}

\pagebreak

\tableofcontents

\pagebreak

\section{Introduction}
\paragraph{The CutFEM Paradigm}
In this contribution we develop a cut finite element method (CutFEM) 
for linear elasticity. The CutFEM paradigm is based on the following 
main concepts
\begin{itemize}
\item The computational domain is considered as a subset of polygonal 
domain which is equipped with a so called background mesh. The computational 
domain is represented on the background mesh and its boundary and interior 
interfaces are allowed to cut through the elements in an arbitrary fashion. 
On the background mesh there is a finite element space.
\item The active mesh is the set of elements that intersects the computational domain. The finite element space on the active mesh is taken to be the restriction 
of the finite element space on the background mesh.
\item The finite element method is based on a variational formulation on the computational domain, which includes weak enforcement of all interface and boundary conditions. 
\item Stabilization terms are added to handle the loss of control emanating 
from the presence of cut elements. The stabilization terms essentially restores 
the properties of standard finite element formulations.
\end{itemize}
The CutFEM is designed in such a way that we can establish the following theoretical results
\begin{itemize}
\item Optimal order a priori error estimates.
\item Optimal order bounds on the condition number of the stiffness and mass matrices.
\end{itemize}
These results hold also for higher order polynomials provided 
the geometric representation of the domain and quadrature computations 
are sufficiently accurate. See the overview article \cite{BurClaHan15} and the 
references therein for further details.

\paragraph{Contributions}

In this contribution we focus on applications of 
CutFEM in solid mechanics. In particular we consider:
\begin{itemize}
\item Formulation of CutFEM for linear elasticity including an easy to access 
account of the  theoretical results. We also provide extensive numerical results illustrating the performance of the method for test problems of practical interest. 
We study the displacement problem, the frequency response problem, and the 
eigenvalue problem. We also study the behavior of the method for thin structures 
which are common in engineering applications. 
\item  Employing the CutFEM paradigm to the study of reinforced bulk 
materials, where the reinforcement consists of fibres, which are modeled 
as a superposition of beams and trusses. The beam model leads to a fourth 
order problem which we discretize using techniques from continuous/discontinuous Galerkin methods and the finite element space is the restriction of the bulk 
finite element space to the fibre. This approach requires a continuous finite element 
space consisting of at least second order piecewise polynomials. In its simplest form, we simply superimpose the additional stiffness, from the fibres to the bulk stiffness, by summing the bilinear forms for the fibres to the bulk material 
bilinear form and using the same finite element space. Note that here the the bulk material provides sufficient stability and no further stabilization is necessary. 
We provide illustrating numerical examples.
\end{itemize} 

\paragraph{Previous Work} 

The possibility of discretizing partial differential equations on the cut part of elements was first proposed by Olshanskii, Reusken, and Grande 
\cite{OlReGr09}, and has been further developed by this group in, e.g., \cite{OlReXu12,GrOlRe15,GrRe16}.
The work presented here builds on the CutFEM developments by the authors and their coworkers concerning stabilization of cut elements in \cite{BeBuHa09,BuHa10,BuHa12,MasLarLog14,multipatch-cutfem} and concerning beam, plate, and membrane formulations in \cite{HaLaLa14,CeHaLa16,HaLa17}.
Related work has been presented by Fries et al. \cite{FrOmScSt17,FrOm16} and by Rank and coworkers \cite{PaDuRa07,BaVaRa13}, see also \cite{ScRu15,PrVeZwBr17}.


%

\paragraph{Outline} In Section 2 we formulate the 
governing equations, the different model problems, and the finite 
element method, in Section 3 we present the theoretical results 
including error estimates and bounds for the condition number 
of the mass and stiffness matrix, in Section 4 we present extensive 
numerical results, in Section 5 we develop a cut finite element method 
for fibre reinforced materials.

\section{Linear Elasticity and the Cut Finite Element Method}

\subsection{Linear Elasticity and Model Problems}\label{sec:linear-elasticity}

Let $\Omega$ be a domain in $\mathbb{R}^d$, $d=2$ or $3$, with 
boundary $\partial \Omega = \bndD \cup \bndN$, $\bndD \cap \bndN=\emptyset$, 
and exterior unit normal $\bfn$. We consider the following problems: 
\begin{itemize}
\item (\emph{The Displacement Problem})\, Find the displacement
$\bfu:\Omega \rightarrow \IR^d$ such that
\begin{subequations}\label{eq:standard}
\begin{alignat}{2}\label{eq:standard-a}
-\bfsig(\bfu)\cdot \nabla &= \bff \qquad 
&& \text{in $\Omega$}
\\ \label{eq:standard-b}
\bfsig(\bfu) \cdot \bfn &= \bfg_N \qquad && 
\text{on $\bndN$}
\\ \label{eq:standard-c}
\bfu &= \bfg_D \qquad && 
\text{on $\bndD$}
\end{alignat}
\end{subequations}
where the stress and strain tensors are defined by 
\begin{equation}
\bfsig(\bfu) = 2\mu \bfeps(\bfu) + \lambda\text{tr}(\bfeps(\bfu)),
\qquad 
\bfeps(\bfu) = \frac{1}{2}\Big( \bfu\otimes\nabla + \nabla \otimes \bfu \Big)
\end{equation}
with Lam\'e parameters $\lambda$ and $\mu$,
$\bff$, $\bfg_N$, $\bfg_D$ are given data, and $\bfa \otimes \bfb$ 
is the tensor product of vectors $\bfa$ and $\bfb$ with elements
$(\bfa \otimes \bfb)_{ij} = a_i b_j$.

\item (\emph{The Frequency Response Problem})\, Given a frequency $\omega$ find the 
displacement 
$\bfu:\Omega \rightarrow \IR^d$, 
such that
\begin{subequations}\label{eq:frequency}
\begin{alignat}{2}\label{eq:frequency-a}
-\bfsig(\bfu)\cdot \nabla - \omega^2 \bfu &= \bff \qquad 
&& \text{in $\Omega$}
\\ \label{eq:frequency-b}
\bfsig(\bfu) \cdot \bfn &= \bfg_N \qquad && 
\text{on $\bndN$}
\\ \label{eq:frequency-c}
\bfu &= \bfzero \qquad && 
\text{on $\bndD$}
\end{alignat}
\end{subequations}
Here we often compute the energy 
\begin{equation}
E(\omega) = \int_\Omega \bfsig(\bfu(\omega)):\bfeps(\bfu(\omega))\label{eq:energyE}
\end{equation}
for a range of values of the parameter $\omega$.

\item (\emph{The Eigenvalue Problem})\, Find the displacement 
eigenvector 
$\bfu:\Omega \rightarrow \IR^d$ and eigenvalue 
$\lambda \in \IR$, such that
\begin{subequations}\label{eq:evp}
\begin{alignat}{2}\label{eq:evp-a}
-\bfsig(\bfu)\cdot \nabla - \lambda \bfu &= \bfzero \qquad 
&& \text{in $\Omega$}
\\ \label{eq:evp-b}
\bfsig(\bfu) \cdot \bfn &= \bfzero \qquad && 
\text{on $\bndN$}
\\ \label{eq:evp-c}
\bfu &= \bfzero \qquad && 
\text{on $\bndD$}
\end{alignat}
\end{subequations}
This problem has an infinite sequence of solutions $(\bfu_i,\lambda_i)$ 
where $0\leq \lambda_1 <\lambda_2 <\dots$, such that 
$\lambda_i\rightarrow \infty$ and there are no accumulation points. There 
may be multiple eigenvalues, which is common in applications due 
to symmetries of the domain.
\end{itemize}

\subsection{Weak Formulations}

Let $\bfV_{\bfg}=\{\bfv \in \bfH^1(\Omega) : \text{$\bfv=\bfg$ on $\partial\Omega_{D}$}\}$, 
and define the forms
\begin{align}
m(\bfv,\bfw) &= (\bfv,\bfw)_\Omega
\\
a(\bfv,\bfw) &=  2\mu (\bfeps(\bfv),\bfeps(\bfw))_\Omega 
+ \lambda (\text{tr}(\bfeps(\bfv)),\text{tr}(\bfeps(\bfw)))_\Omega
\end{align}
Here and below we use the standard notation $H^s(\omega)$ 
for the $\IR^d$ valued Sobolev space of order $s$ on the set 
$\omega$, with norm $\|\cdot\|_{H^s(\omega)}$, for $s=0$ 
we use the notation $L^2(\omega)$ and denote the scalar 
product by $(\cdot, \cdot)_\omega$, and the norm by 
$\|\cdot \|_\omega$.

The weak formulations of problems 
\eqref{eq:standard}, \eqref{eq:frequency}, and 
\eqref{eq:evp} take the form:
\begin{itemize}
\item (\emph{The Displacement Problem})\, Find $\bfu \in \bfV_{\bfg_D}$ such that
\begin{equation}\label{eq:standard-weak}
a(\bfu,\bfv) = l(\bfv),  
\qquad \forall \bfv \in \bfV_0 
\end{equation}
where the linear form on the right hand side is defined by
\begin{align}
l(\bfv) &= (\bff,\bfv)_\Omega + (\bfg_N,\bfv)_{\partial \Omega_N} 
\end{align}
\item (\emph{The Frequency Response Problem})\, Given $\omega\in\IR$ find $\bfu \in \bfV_0$ such that 
\begin{equation}\label{eq:frequency-weak}
a(\bfu,\bfv) - \omega^2 m(\bfu,\bfv) = l(\bfv),  
\qquad \forall \bfv \in \bfV_0 
\end{equation}

\item (\emph{The Eigenvalue Problem})\, Find $(\bfu,\lambda) \in \bfV_0 \times \IR$ such that 
\begin{equation}\label{eq:evp-weak}
a(\bfu,\bfv) - \lambda m(\bfu,\bfv) = 0,  
\qquad \forall \bfv \in \bfV_0 
\end{equation}
\end{itemize}

Let $\bfRM = \text{ker}(\bfeps)$ be the space of linearized 
rigid body motions, which has dimension $3$ in $\IR^2$ 
and $6$ in $\IR^3$, see \cite{BreSco08} Section 11. We assume 
for simplicity that the Dirichlet boundary condition completely 
determines the linearized rigid body part of the solution, i.e., 
if $v \in \bfRM$ and $v=0$ on $\partial \Omega_D$ then $v=0$, 
then problem (\ref{eq:standard-weak}) is well posed. More general 
situations may be considered by working in a quotient space. For 
instance, an important case in practice is a free elastic body, 
$\partial\Omega_D = \emptyset$. We then seek the solution in the 
quotient space $H^1(\Omega)/\bfRM$ and problem 
(\ref{eq:standard-weak}) is well posed if the right hand side satisfies
\begin{equation}
l_h(\bfv) = 0, \qquad \bfv \in \bfRM
\end{equation}
The frequency response problem (\ref{eq:frequency-weak}) is well 
posed if $\omega$ is not an eigenvalue.

\subsection{The Mesh and Finite Element Spaces}

\begin{figure}
\centering
\includegraphics[width=0.8\linewidth]{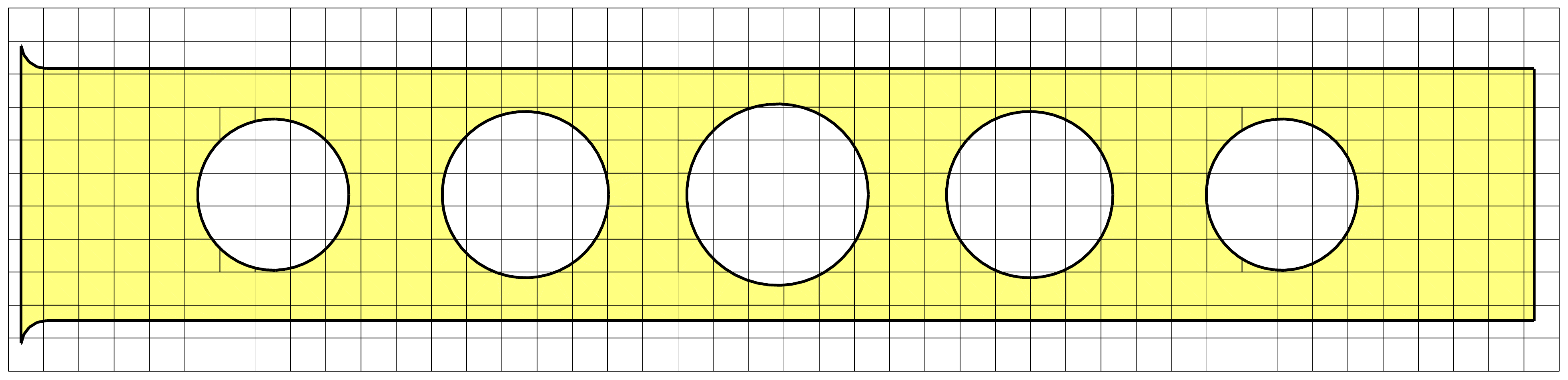}
\caption{The domain $\Omega$ and the background mesh $\mcK_{h,0}$.}\label{fig:model-mesh}
\end{figure}

\begin{figure}
\centering
\includegraphics[width=0.8\linewidth]{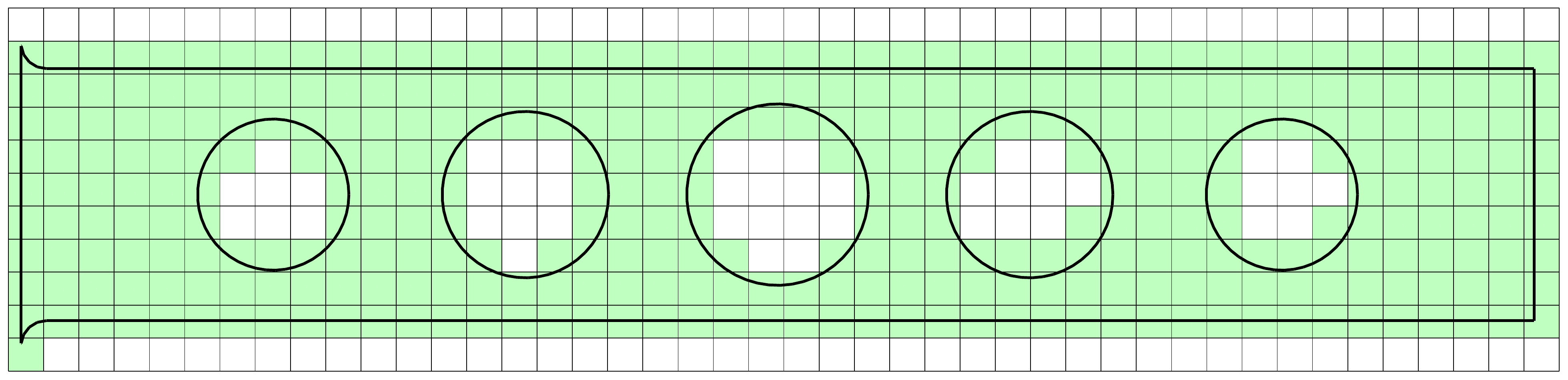}
\caption{The domain $N_h(\Omega)$ of the active mesh $\mcK_{h}$.}\label{fig:model-active}
\end{figure}

\begin{figure}
\centering
\includegraphics[width=0.8\linewidth]{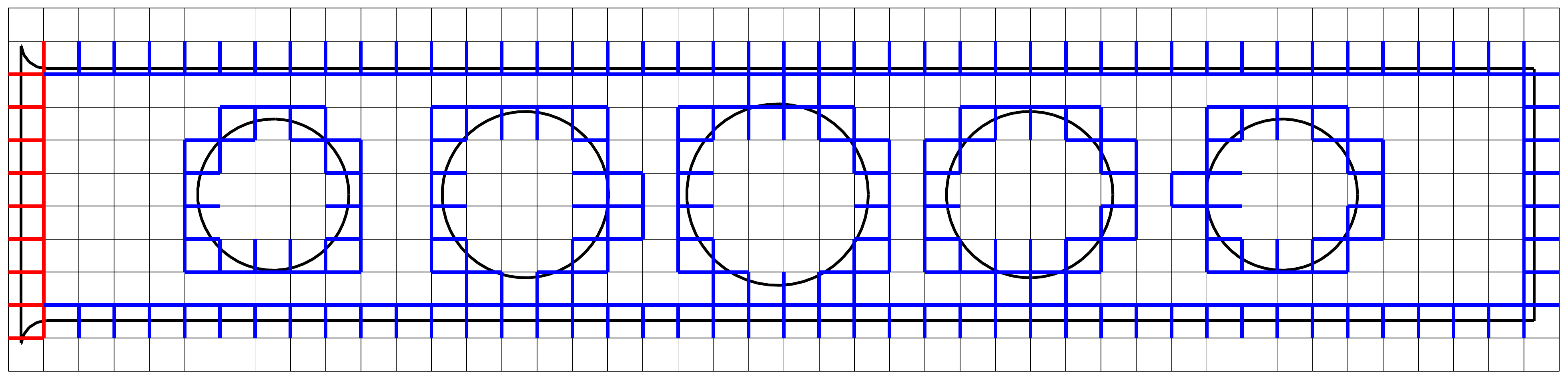}
\caption{The sets of faces used in stabilization of the method. 
Here the set $\mcF_h(\partial\Omega_D)$ associated with the 
Dirichlet boundary is shown in red and the set 
$\mcF_h(\partial\Omega)\setminus\mcF_h(\partial\Omega_D)$ 
associated with the Neumann boundary is shown in blue.}\label{fig:faces-stab}
\end{figure}

\begin{itemize}
\item Let $\Omega_0$ be polygonal domain such that 
$\Omega \subset\Omega_0 \subset \mathbb{R}^d$ and let 
$\{\mcK_{0,h}, h \in (0,h_0]\}$ be a family of quasiuniform 
partitions, with mesh parameter $h$, of $\Omega_0$ into 
shape regular elements $K$. We denote the set of faces 
in $\mcK_{h,0}$ by $\mcF_{h,0}$. We refer to $\mcK_{h,0}$ 
as the background mesh. 

\item Let
\begin{equation}
\mcK_{h} = \{K \in \mcK_{0,h} \, : \, K \cap 
\Omega
\neq \emptyset \} 
\end{equation}
be the submesh of $\mcK_{h,0}$ consisting of elements 
that intersect $\Omega$. 
We denote the set of faces 
in $\mcK_{h}$ by $\mcF_{h}$ and we refer to $\mcK_{h}$ 
as the active mesh.
The domain of the active mesh is
\begin{equation}
N_h(\Omega) = \cup_{K \in \mcK_{h}}
\end{equation}
and thus $N_h(\Omega)$ covers $\Omega$ and we obviously have 
$\Omega \subset N_h(\Omega)$.

\item Given some subset of the boundary $\Gamma\subset\partial\Omega$ let
\begin{align}
\mcK_h(\Gamma) &=
\{ K \in \mcK_h \, : \, \overline{K} \cap \Gamma \neq \emptyset \}
\\
\mcF_h(\Gamma) &=
\{ F \in \mcF_h \, : \, F \cap \partial K \neq \emptyset \, , \ K \in \mcK_h(\Gamma) \}
\end{align}
Thus $\mcF_h(\Gamma)$ is the set of interior faces belonging to elements in $\mcK_h$ that intersects~$\Gamma$.

\item Let $\bfV_{0,h}$ be the space of piecewise continuous $\IR^d$ 
valued polynomials of order $p$ defined on $\mcK_{0,h}$. Let the 
finite element space on $N_h(\Omega)$ be defined by 
\begin{equation}
\bfV_{h} = \bfV_{0,h}|_{N_{h}(\Omega)}
\end{equation}

\end{itemize}

We illustrate the computational domain $\Omega$ and the background mesh $\mcK_{h,0}$ in Figure~\ref{fig:model-mesh}, the domain of active elements $N_h(\Omega)$ in Figure~\ref{fig:model-active}, and the sets of faces $\mcK_h(\Gamma_N)$ and $\mcK_h(\Gamma_D)$ in Figure~\ref{fig:faces-stab}.

\subsection{Bilinear Forms}
\begin{itemize}

\item Let $\mcF_h(\partial \Omega)$ in be the set of interior faces that belongs to an element $K$ 
such that $K\cap \partial \Omega \neq \emptyset$, and define the stabilization form
\begin{align}\label{def:jh}
j_h(\bfv,\bfw) &= \sum_{F \in \mcF_h(\partial \Omega)} 
\sum_{l=1}^p h^{2l+1} ([D_{\bfn_F}^l \bfv],[D_{\bfn_F}^l \bfw])_F
\end{align}
where $D_{\bfn_F}^l$ is the $l$:th partial derivative in the direction 
of the normal $\bfn_F$ to the face $F\in \mcF_h(\partial \Omega)$ and $[\cdot]$ 
denotes the jump in a discontinuous function at $F$.

\item Define the stabilized forms
\begin{align}\label{def:mh}
m_{h}(\bfv,\bfw) &= m(\bfv,\bfw) + \gamma_m j_{h}(\bfv,\bfw)
\\
\label{def:ah}
a_{h}(\bfv,\bfw) &= a(\bfv,\bfw) +  \gamma_a h^{-2}  j_{h}(\bfv,\bfw)
\end{align}
where $\gamma_m,\gamma_a > 0$ are parameters.
We will see that the stabilized forms have the same general 
properties as the corresponding forms on standard (not cut) 
elements.

\item Define the stabilized Nitsche form  
\begin{align}
\label{def:Ah}
A_h(\bfv,\bfw) &= a_{h}(\bfv,\bfw) 
-(\bfsig(\bfv)\cdot \bfn ,\bfw)_{\partial \Omega_{D}}
-(\bfv, \bfsig(\bfw)\cdot \bfn)_{\partial \Omega_{D}}
+ \beta h^{-1}b_h(\bfv,\bfw)
\end{align}
where $\beta>0$ is a parameter and 
\begin{equation}
b_h(\bfv,\bfw) = 2\mu (\bfv,\bfw)_{\partial \Omega_{D}}
+ \lambda (\bfv\cdot \bfn,\bfw\cdot \bfn)_{\partial \Omega_{D}}
\end{equation}
Note that the stabilization is included in the form $a_h$, see (\ref{def:ah}).
\end{itemize}

\begin{rem}
We will see in the analysis that we may use an alternative form 
of the stabilization term in $a_h$ defined as follows. Define 
\begin{align}
j_{h,D} (\bfv,\bfw) &= \sum_{F \in \mcF_h(\partial \Omega_D)}
\sum_{l=1}^p h^{2l+1} ([D_{\bfn_F}^l \bfv],[D_{\bfn_F}^l \bfw])_F
\\
j_{h,N} (\bfv,\bfw) &= \sum_{F \in \mcF_h(\partial \Omega) \setminus \mcF_h(\partial \Omega_D)} 
\sum_{l=1}^p h^{2l+1} ([D_{\bfn_F}^l \bfv],[D_{\bfn_F}^l \bfw])_F
\end{align}
with the obvious notation, and let 
\begin{equation}\label{def:jh-alt}
a_h(v,w) = a(v,w) + \gamma_a \left( j_{h,N}(v,w) + h^{-2} j_{h,D}(v,w) \right)
\end{equation}
Thus we use a stronger stabilization only on the Dirichlet boundary, which is 
needed in the proof of the coercivity of $A_h$. The weaker control on the 
Neumann part of the boundary is sufficient to establish the bounds on the 
condition number. See Figure \ref{fig:faces-stab}. The stabilization of the form 
$m_h$ is always done using $j_h$ as defined in (\ref{def:jh}). As a rule of thumb 
weaker stabilization yields more accurate numerical results and therefore (\ref{def:jh-alt}) 
may be prefered in practice.
\end{rem}

\subsection{The Finite Element Method} 

With $m_h$ and $A_h$ defined in (\ref{def:mh}) and 
(\ref{def:Ah}) we have the finite element methods 
\begin{itemize}
\item (\emph{Static Load})
Find $\bfu_h \in \bfV_h$ such that
\begin{equation}\label{eq:standard-fem}
A_h(\bfu_h ,\bfv) = L_h(\bfv), \qquad 
\forall \bfv \in \bfV_h
\end{equation}
where the right hand side is given 
by
\begin{align}
L_h(\bfv) &= (\bff,\bfv)_{\Omega} 
+ (\bfg_N,\bfv)_{\Gamma_N}
\\&\qquad\nonumber
- (\bfg_D,\bfsig(\bfv)\cdot \bfn )_{\partial \Omega_D}
+ \beta h^{-1} b_h(\bfg_D,\bfv)_{\partial \Omega_D} 
\end{align}

\item (\emph{Frequency Response})
Given $\omega\in\IR$ find $\bfu_h \in \bfV_h$ such that 
\begin{equation}\label{eq:frequency-fem}
A_h(\bfu_h,\bfv) - \omega^2 m_h(\bfu_h,\bfv) = L_h(\bfv),  
\qquad \forall \bfv \in \bfV_h
\end{equation}

\item (\emph{Eigenvalue Problem})
Find $(\bfu_h,\lambda_h) \in \bfV_h\times \mathbb{R}$ 
such that
\begin{equation}\label{eq:evp-fem}
A_h(\bfu_h,\bfv) - \lambda_h m_h(\bfu_h,\bfv) = 0, \qquad 
\forall \bfv \in \bfV_h
\end{equation}
\end{itemize}

\section{Implementational Aspects}

\begin{figure}
\centering
\includegraphics[width=0.49\linewidth]{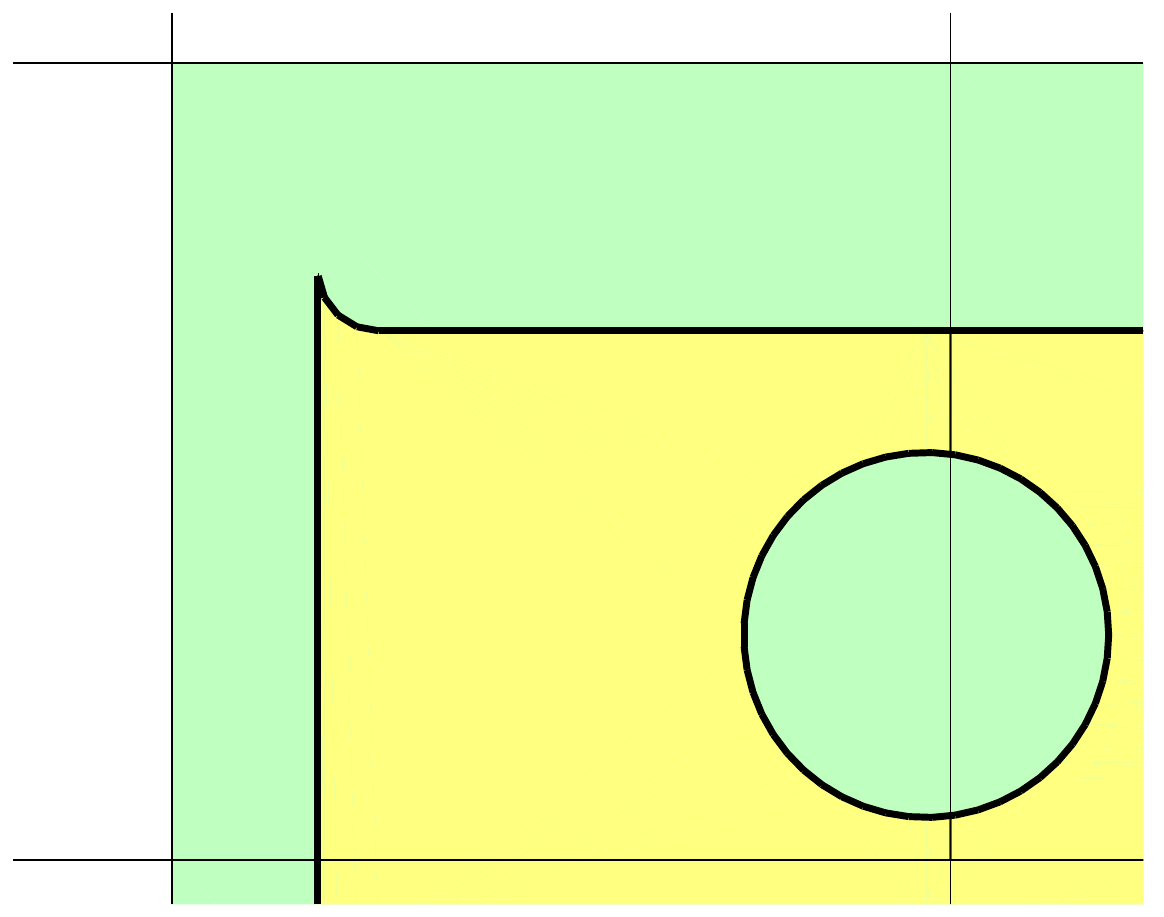}
\includegraphics[width=0.49\linewidth]{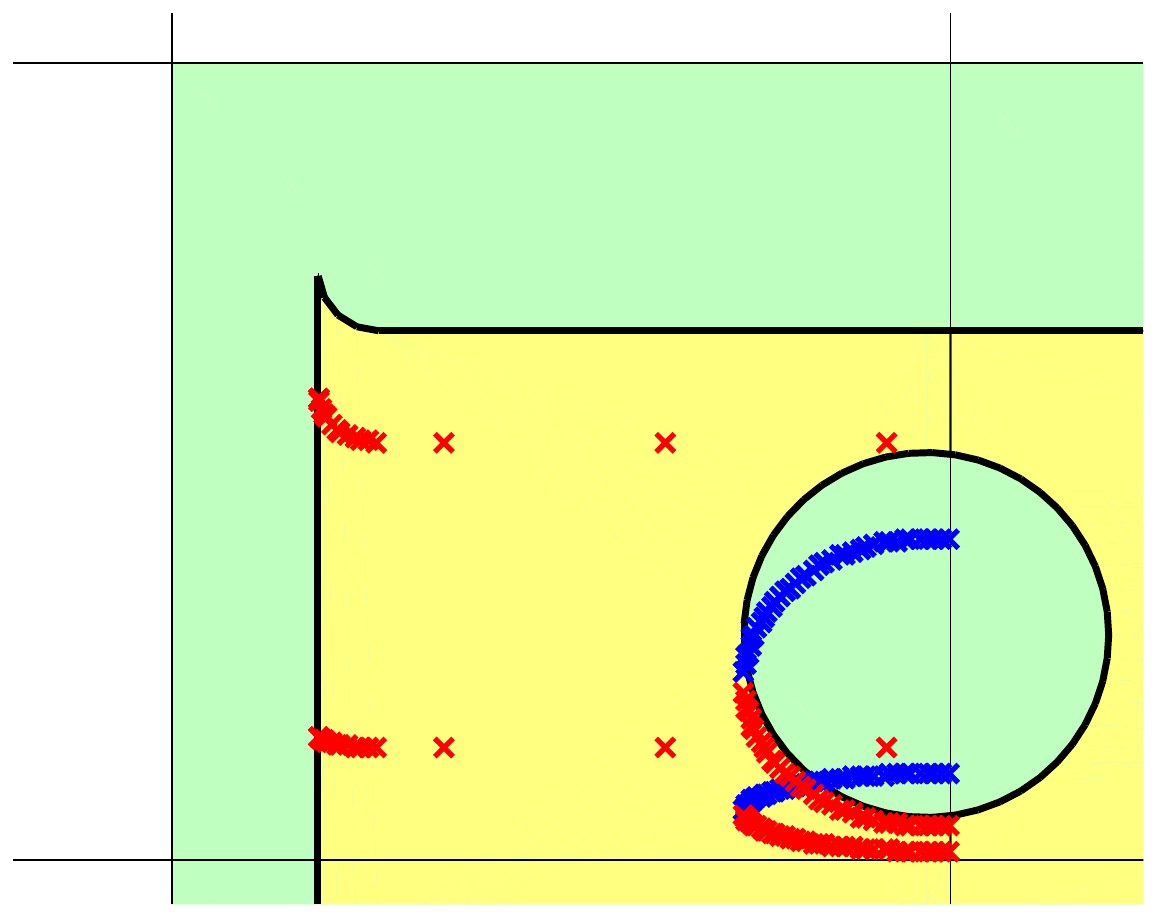}
\caption{Left: Geometry described by piecewise linear boundary representation where the round off in the top left corner consists of 5 line segments and the (complete) circle consists of 50 segments. Right: Quadrature points constructed such that tensor product polynomials of order $2$ are exactly integrated over the intersection between the element and the domain. Red points have positive weights and blue points have negative weights.}
\label{eq:cut-elm-implement}
\end{figure}

\subsection{Boundary Representation}

While it is common for fictitious domain methods to be based on implicit representations of the geometry, such as level-sets or signed distance functions, mechanical components typically are parametrically described using CAD. We therefore in the present work focus on parametric boundary representations.

For simplicity we in our implementation use a piecewise linear boundary representation albeit with a resolution which is independent of the finite element mesh. Thus, within a single element in the mesh we can represent geometric features of arbitrary complexity, see for example Figure~\ref{eq:cut-elm-implement}. 

\subsection{Quadrature}

To assemble our matrices we need to correctly integrate over the intersection between the computational domain $\Omega$ and each active element $K$. As the geometry of $\Omega\cap K$ is allowed to be arbitrarily complex we use the 2D quadrature rule described in \cite{multipatch-cutfem} for accurate integration of both full and tensor product polynomials of higher order over this intersection. This rule assumes the domain of integration is described as a number of closed loops of piecewise polynomial curves and by using the divergence theorem the integral is first posed as a boundary integral. Using the fundamental theorem of calculus it is then reformulated as a sequence of one dimensional integrals, one in each spatial dimension, which are evaluated using appropriate Gauss quadrature rules chosen based on the polynomial structure of the integrand and the polynomial order of the boundary description. Example quadrature points for integration of a tensor product polynomial of order $2$ are displayed in Figure~\ref{eq:cut-elm-implement}. Note that each boundary segment produces a number of quadrature points.

\section{Theoretical Results}

In this section we develop the basic theoretical results 
concerning stability, conditioning of the stiffness and mass 
matrices, and error estimates for the displacement problem 
(\ref{eq:standard-a}-\ref{eq:standard-c}). The main results are:
\begin{itemize} 
\item The stabilized forms $m_h$ and $a_h$ enjoy the same 
coercivity and continuity properties with respect to the proper 
norms on $N_h(\Omega)$ as corresponding standard forms 
on $\Omega$ equipped with a fitted mesh. 
\item Optimal order approximation holds in the relevant 
norms since there is a stable interpolation operator obtained 
by composing a standard Scott-Zhang interpolation operator 
with an extension operator that in a $H^s$ stable way extends a
function from $\Omega$ to a neighborhood of $\Omega$ containing 
$N_h(\Omega)$.
\item Optimal order error estimates follows from the stability 
and continuity properties combined with the approximation properties.
\item The mass and stiffness matrices have condition numbers that scale 
with the mesh size in the same way as for standard fitted meshes.
\end{itemize}

\subsection{Norms}
\begin{itemize}
\item Define the norms
\begin{equation}
\|\bfv\|^2_{m_h} = m_h(\bfv,\bfv), 
\qquad
\| \bfv \|^2_{a_h} = a_h(\bfv,\bfv),
\qquad 
\|\bfv \|^2_{j_h} = j_h(\bfv,\bfv) 
\end{equation}
We recall that $m_h$ and $a_h$ are the stabilized forms, see (\ref{def:mh}) 
and (\ref{def:ah}), so that
\begin{equation}
\|\bfv\|^2_{m_h} = \|\bfv \|^2_\Omega + \| \bfv \|^2_{j_h}, 
\qquad
\|\bfv\|^2_{a_h} = \|\nabla \bfv \|^2_\Omega + h^{-2} \| \bfv \|^2_{j_h}
\end{equation}
\item Define the discontinuous Galerkin norm associated with 
the form $A_h$,
\begin{align}
\tn \bfv \tn_h^2 &= 
\| \bfv \|^2_{a_h} 
+ 
h \|\bfsig(\bfv)\|^2_{\partial\Omega_D} 
+ 
h^{-1}\tn \bfv \tn^2_{b_h}
\end{align}
\end{itemize}

\subsection{Inverse Estimates}

The form $j_h$ provides additional control of the variation 
of functions in $\bfV_h$ close to the boundary. This additional 
control implies that certain inverse inequalities that hold for 
standard elements also hold for the stabilized forms. We have the 
following results.

\begin{lem} The following inverse estimates hold 
\begin{alignat}{2}
\label{eq:jhL2control}
\| \bfv \|_{N_h(\Omega)} 
&\lesssim \| \bfv \|_{m_{h}}, 
\qquad &&\forall \bfv \in \bfV_h
\\
\label{eq:jhgradcontrol}
\| \nabla \bfv  \|_{N_h(\Omega)} 
&\lesssim \| \bfv \|_{a_h},
\qquad &&\forall \bfv \in \bfV_h
\\
\label{eq:jhenergy-control}
2\mu \| \bfeps(\bfv)  \|^2_{N_h(\Omega)} 
+ \lambda \| \mathrm{tr} \epsilon(\bfv) \|^2_{N_h(\Omega)}
&\lesssim \tn \bfv \tn_h^2,
\qquad &&\forall \bfv \in \bfV_h
\end{alignat}
\end{lem}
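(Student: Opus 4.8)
The plan is to reduce all three estimates to a single fictitious-domain inequality expressing that the jump control supplied by $j_h$ lets one bound $L^2$-type norms over the whole active domain $N_h(\Omega)$ by the corresponding norms over $\Omega$ together with $\|\cdot\|_{j_h}$. Concretely, after recalling $\|\bfv\|^2_{m_h}=\|\bfv\|^2_\Omega+\|\bfv\|^2_{j_h}$, the estimate \eqref{eq:jhL2control} is equivalent to
\[
\|\bfv\|^2_{N_h(\Omega)}\lesssim \|\bfv\|^2_\Omega+\|\bfv\|^2_{j_h},\qquad \forall\,\bfv\in\bfV_h,
\]
and once this is in hand the remaining two estimates follow by applying the same machinery to $\nabla\bfv$ and by bounding the strain pointwise.

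The first ingredient is a single-face transfer estimate: if $K_1,K_2\in\mcK_h$ share a face $F\in\mcF_h(\partial\Omega)$, then
\[
\|\bfv\|^2_{K_2}\lesssim \|\bfv\|^2_{K_1}+\sum_{l=1}^p h^{2l+1}\|[D_{\bfn_F}^l\bfv]\|^2_F .
\]
To prove it I would let $\widetilde{\bfv}_1$ be the polynomial $\bfv|_{K_1}$ extended to $K_2$; since $L^2$ norms of polynomials of degree $\le p$ over neighbouring shape-regular elements are equivalent with scale-invariant constants, $\|\widetilde{\bfv}_1\|_{K_2}\lesssim\|\bfv\|_{K_1}$. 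The difference $\bfw=\bfv|_{K_2}-\widetilde{\bfv}_1$ is a polynomial of degree $\le p$; writing $s$ for the normal coordinate along $\bfn_F$ measured from $F$, its terminating Taylor expansion reads $\bfw=\sum_{l=0}^p \tfrac{s^l}{l!}\,[D_{\bfn_F}^l\bfv]|_F$, where the $l=0$ term vanishes because $\bfv$ is continuous across $F$; integrating over $K_2$ and scaling then yields the $h^{2l+1}$-weighted bound. I would next invoke the mesh geometry: by shape-regularity and quasiuniformity each $K\in\mcK_h$ is joined to an element $K^\ast\subset\Omega$ by a chain of neighbouring elements of uniformly bounded length whose shared faces all lie in $\mcF_h(\partial\Omega)$. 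Iterating the transfer estimate along such a chain bounds $\|\bfv\|^2_K$ by $\|\bfv\|^2_{K^\ast}$ plus the $j_h$-contributions of the traversed faces, and summing over $K$ with finite overlap yields the displayed inequality and hence \eqref{eq:jhL2control}.

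For \eqref{eq:jhgradcontrol} I would run the identical chain argument on $\nabla\bfv$, a piecewise polynomial of degree $\le p-1$ that need not be continuous across $F$, so the transfer estimate now sums the jumps $[D_{\bfn_F}^l\nabla\bfv]$ over $l=0,\dots,p-1$. Decomposing each such jump into its normal part $[D_{\bfn_F}^{l+1}\bfv]$ and its tangential parts $\partial_\tau[D_{\bfn_F}^l\bfv]$ (the $l=0$ tangential part vanishing by continuity), and trading each tangential derivative for a factor $h^{-1}$ through a face inverse inequality, every resulting term is of the form $h^{-2}\times(\text{a }j_h\text{ term of order between }1\text{ and }p)$, so the total is controlled by $h^{-2}\|\bfv\|^2_{j_h}$, matching $\|\bfv\|^2_{a_h}=\|\nabla\bfv\|^2_\Omega+h^{-2}\|\bfv\|^2_{j_h}$. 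Finally \eqref{eq:jhenergy-control} is immediate: $|\bfeps(\bfv)|$ and $|\mathrm{tr}\,\bfeps(\bfv)|$ are pointwise bounded by $|\nabla\bfv|$, so its left-hand side is $\lesssim\|\nabla\bfv\|^2_{N_h(\Omega)}\lesssim\|\bfv\|^2_{a_h}\le\tn\bfv\tn_h^2$.

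The step I expect to be the main obstacle is the weight bookkeeping in \eqref{eq:jhgradcontrol}: one must verify that the orders $l=0,\dots,p-1$ produced by transferring $\nabla\bfv$, after the normal/tangential split and the face inverse inequalities, reproduce exactly the orders $1,\dots,p$ weighted by $h^{2l+1}$ that define $j_h$, and that no tangential derivative of a normal-derivative jump escapes this span. The second delicate point is the geometric hypothesis guaranteeing chains of uniformly bounded length connecting each active element to an interior one; this is where shape-regularity, quasiuniformity, and sufficient resolution of $\partial\Omega$ by the mesh enter, and it underlies the finite-overlap summation used throughout.
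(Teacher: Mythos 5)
Your treatment of \eqref{eq:jhL2control} and \eqref{eq:jhgradcontrol} is essentially the paper's own argument: the single-face transfer estimate via the terminating normal-direction Taylor expansion of $\bfv_1-\bfv_2$ (with the $l=0$ term killed by continuity), the equivalence of $L^2$ norms of a fixed polynomial on neighbouring shape-regular elements, iteration along chains of uniformly bounded length into the interior, and, for the gradient, the tangential/normal splitting of the jump with a face inverse inequality trading $\nabla_F$ for $h^{-1}$. Your weight bookkeeping for \eqref{eq:jhgradcontrol} checks out and reproduces the paper's computation in a slightly reorganized form: the paper differentiates the Taylor identity rather than transferring $\nabla\bfv$ as a discontinuous piecewise polynomial of degree $p-1$, but the resulting terms, and the final $h^{-2}\|\bfv\|^2_{j_h}$ bound, are the same.

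The gap is in \eqref{eq:jhenergy-control}. The chain $2\mu\|\bfeps(\bfv)\|^2_{N_h(\Omega)}+\lambda\|\mathrm{tr}\,\bfeps(\bfv)\|^2_{N_h(\Omega)}\lesssim\|\nabla\bfv\|^2_{N_h(\Omega)}\lesssim\|\bfv\|^2_{a_h}\le\tn\bfv\tn^2_h$ fails at two points. First, the constant in the initial pointwise bound is of order $2\mu+\lambda$, which destroys the Lam\'e weighting that the left-hand side of \eqref{eq:jhenergy-control} is designed to preserve. Second, and more seriously, $\tn\cdot\tn^2_h$ is built from $a_h(\bfv,\bfv)$, whose volume part is the elastic energy $2\mu\|\bfeps(\bfv)\|^2_\Omega+\lambda\|\mathrm{tr}\,\bfeps(\bfv)\|^2_\Omega$ and not the full $H^1$ seminorm; bounding $\|\nabla\bfv\|^2_\Omega$ by that energy requires a discrete Korn inequality (a rigid rotation has vanishing strain and vanishing jumps but nonvanishing gradient), and even where Korn applies the constant degrades like $\lambda/\mu$ in the nearly incompressible limit. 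This matters because \eqref{eq:jhenergy-control} is precisely the estimate used to absorb the boundary strain terms in the coercivity proof of $A_h$. The paper avoids both problems by transferring the weighted energy directly: the polynomial inverse inequalities $\|\bfeps(\bfw)\|_{K_1}\lesssim\|\bfeps(\bfw)\|_{K_2}$ and $\|\mathrm{tr}\,\bfeps(\bfw)\|_{K_1}\lesssim\|\mathrm{tr}\,\bfeps(\bfw)\|_{K_2}$ preserve the $2\mu$ and $\lambda$ weights on the element-to-element part, and only the difference $\bfv_1-\bfv_2$ --- handled through the identity $2\bfeps(\bfv)=\bfv\otimes\nabla_F+\nabla_F\otimes\bfv+(D_n\bfv)\otimes\bfn_F+\bfn_F\otimes(D_n\bfv)$, exactly as in your argument for \eqref{eq:jhgradcontrol} --- lands in the stabilization term, where a $(2\mu+\lambda)$ factor is harmless. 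You should replace your final step with this direct transfer.
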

\begin{proof} {\bf (\ref{eq:jhL2control}).} We first 
consider two neighboring elements
$K_1$ and $K_2$ sharing a face $F$. We shall prove that 
\begin{equation}\label{eq:jh-twoelements-l2}
\| \bfv \|^2_{K_1} 
\lesssim 
\| \bfv \|^2_{K_2}
+ 
\sum_{j=1}^p \|[D^j_n \bfv]\|^2_F 
h^{2j+1}
\end{equation}
Iterating (\ref{eq:jh-twoelements-l2}) a uniformly bounded number 
of times we reach an element in the interior of $\Omega$ and thus 
we can control the elements on the boundary in terms of elements 
in the interior and the stabilization term. 
To prove (\ref{eq:jh-twoelements-l2}) we consider $\bfv\in V_h$ and 
let $\bfv_i$ be the polynomial on $K_1 \cup K_2$ such that 
$\bfv_i = v|_{K_i}$, $i=1,2$. We then note that 
\begin{align}
\| \bfv_1 \|_{K_1} &\leq  \| \bfv_1 - \bfv_2 \|_{K_1} + \| \bfv_2 \|_{K_1} 
\lesssim 
 \| \bfv_1 - \bfv_2 \|_{K_1} + \| \bfv_2 \|_{K_2}
\end{align}
where we used the inverse inequality
\begin{equation}\label{eq:inverse-elements}
\| \bfw \|_{K_1} \lesssim  \| \bfw \|_{K_2} 
\end{equation}
which holds for all $\bfw \in \mcP_p (K_1 \cup K_2 )$ since $K_1$ and $K_2$ are 
shape regular. 

Now for each $\bfx \in K_1$ we have the identity
\begin{equation}\label{proog:jhcontrola}
\bfv_1(\bfx) - \bfv_2(\bfx) 
= \sum_{j=1}^p [D^j_n \bfv(\bfx_F)] x_n^j 
\end{equation}
where $\bfx = \bfx_F + x_n \bfn_F$, with $\bfn_F$ the unit normal to $F$ exterior to $K_2$, 
and $x_n \in \IR$ the distance from $\bfx$ to the hyperplane containing $F$. 
Let $P_F (K_1) = \{ \bfx_F = \bfx - x_n \bfn_F : \bfx\in K_1 \}$ be the projection of $K_1$ 
onto the hyperplane containing $F$ and $h_n\lesssim h$ be the 
maximal value of $x_n$ over $K_1$. Then we have the estimate
\begin{align}
\| \bfv_1 - \bfv_2 \|^2_{K_1} 
&\leq
\int_0^{h_n} \|[D^j_n \bfv]\|^2_{P_F (K_1)} x_n^{2j} \, d x_n 
\\
&\lesssim \sum_{j=1}^p \|[D^j_n \bfv]\|^2_{P_F (K_1)}
h_n^{2j+1}
\\
&\lesssim \sum_{j=1}^p \|[D^j_n \bfv]\|^2_F 
h_n^{2j+1}
\end{align}
where we used the inverse estimate 
\begin{equation}\label{eq:inverse-face}
\|[D^j_n \bfv]\|_{P_F (K_1)} \lesssim \|[D^j_n \bfv]\|_{F}
\end{equation} 
which holds with a uniform constant due to shaperegularity  
and the fact that $[D^j_n \bfv]$ is a polynomial on $P_F(K_1)$.
{\hfill \mbox{\fbox{} } \newline}
\\

\noindent{\bf{(\ref{eq:jhgradcontrol}).}} Here we have 
\begin{align}
\| \nabla \bfv_1 \|_{K_1} 
&\leq  
\|\nabla (\bfv_1 - \bfv_2) \|_{K_1} + \| \nabla \bfv_2 \|_{K_1} 
\lesssim 
 \|\nabla (\bfv_1 - \bfv_2) \|_{K_1} + \| \nabla \bfv_2 \|_{K_2}
\end{align}
where we used the inverse inequality
\begin{equation}\label{eq:inverse-elements-grad}
\| \nabla \bfw \|_{K_1} \lesssim  \| \nabla \bfw \|_{K_2} 
\end{equation}
for all $w \in \mcP_p(K_1 \cup K_2 )$, which holds since 
$\nabla w \in \mcP_{p-1}(K_1 \cup K_2 )$ is also a polynomial. 
Computing the gradient of (\ref{proog:jhcontrola}) we have the 
identity  
\begin{align}
\nabla (\bfv_1(x)- \bfv_2(x)) 
&= \sum_{j=1}^p (\nabla_F [D^j_n \bfv(\bfx_F)]) x_n^j  
+ j[D^j_n \bfv(\bfx_F)] x_n^{j-1}
\end{align}
where $\nabla_F = (1 - \bfn_F \otimes \bfn_F) \nabla$ is the gradient 
tangent 
to $F$. We then have the estimate
\begin{align}
\nonumber
&\|\nabla (\bfv_1(x)- \bfv_2(x))\|_{K_1}^2 
\\
&\qquad \lesssim \sum_{j=1}^p \int_0^{h_n} \Big( 
\|\nabla_F [D^j_n \bfv(\bfx_F)]\|^2_{P_F(K_1)} x_n^{2j}  
+ \|[D^j_n \bfv(\bfx_F)]\|^2_{P_F(K_1)} x_n^{2(j-1)} \Big) \, d x_n
\\
&\qquad \lesssim \sum_{j=1}^p \int_0^{h_n} \Big( 
h^{-2}\|[D^j_n \bfv(\bfx_F)]\|^2_{P_F(K_1)} x_n^{2j}  
+ \|[D^j_n \bfv(\bfx_F)]\|^2_{P_F(K_1)} x_n^{2(j-1)} \Big) \, d x_n
\\
&\qquad \lesssim \sum_{j=1}^p \|[D^j_n \bfv(\bfx_F)]\|^2_{P_F(K_1)} h_n^{2j-1} 
\\
&\qquad \lesssim \sum_{j=1}^p \|[D^j_n \bfv(\bfx_F)]\|^2_{F} h_n^{2j-1} 
\end{align}
where we used an inverse estimate to remove the tangent gradient 
in the first term on the right hand side and finally we used 
the inverse estimate (\ref{eq:inverse-face}).
{\hfill \mbox{\fbox{} } \newline}
\\

\noindent{\bf{(\ref{eq:jhenergy-control}).}} Proceeding as above we 
have the estimate
\begin{align}
2 \mu \| \bfeps (\bfv_1) \|^2_{K_1} 
+ \lambda \| \text{tr} \bfeps (\bfv_1) \|^2_{K_1}
&\lesssim 
2 \mu \| \bfeps (\bfv_1 - \bfv_2) \|^2_{K_1} + \lambda \| \text{tr} \bfeps (\bfv_1 - \bfv_2) \|^2_{K_1}
\\ \nonumber
&\qquad 
+ 2 \mu \| \bfeps (\bfv_2) \|^2_{K_2} 
+ \lambda \| \text{tr} \bfeps (\bfv_2) \|^2_{K_2}
\end{align}
where we used the inverse inequalities
\begin{equation}\label{eq:inverse-elements-eps}
\| \bfeps(\bfw) \|_{K_1} \lesssim  \| \bfeps(\bfw) \|_{K_2}, 
\qquad 
\| \text{tr} \bfeps(\bfw) \|_{K_1} \lesssim  \| \text{tr} \bfeps(\bfw) \|_{K_2}
\end{equation}
for all $w \in \mcP_k(K_1 \cup K_2 )$, which hold 
since $\bfeps(w) \in \mcP_{p-1}(K_1 \cup K_2 )$ and 
$\text{tr} \bfeps(w) \in \mcP_{p-1}(K_1 \cup K_2 )$ are both polynomials. 
Next, in view of the identity
\begin{align}
2\bfeps(\bfv) &= \bfv \otimes \nabla + \nabla \otimes \bfv
\\
&=  \bfv \otimes \nabla_F + \nabla_F \otimes \bfv + (D_n \bfv) \otimes \bfn_F 
+ \bfn_F \otimes (D_n \bfv)
\end{align}
we may prove (\ref{eq:jhenergy-control}) using the same approach as for 
the gradient estimate (\ref{eq:jhgradcontrol}).
\end{proof}


\subsection{Properties of the Forms}

\paragraph{Properties of $m_h$} 

\begin{lem}\label{lem:mh} The bilinear form $m_h$, defined by (\ref{def:mh}), is continuous and coercive 
\begin{equation}
m_h(\bfv,\bfw) \lesssim \| \bfv \|_{N_h(\Omega)} 
\| \bfw \|_{N_h(\Omega)},
\qquad 
\| \bfv \|^2_{N_h(\Omega)}
\lesssim m_h(\bfv,\bfv),\qquad \bfv,\bfw\in \bfV_h 
\end{equation}
\end{lem}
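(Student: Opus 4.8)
The plan is to prove the two inequalities separately, observing that coercivity is essentially immediate from the inverse estimate (\ref{eq:jhL2control}) of the preceding lemma, whereas continuity needs a standard inverse-estimate argument for the stabilization seminorm. For coercivity, I would use the definition $\|\bfv\|^2_{m_h} = m_h(\bfv,\bfv)$ of the norm together with (\ref{eq:jhL2control}), which gives $\|\bfv\|_{N_h(\Omega)} \lesssim \|\bfv\|_{m_h}$. Squaring yields
\begin{equation}
\|\bfv\|^2_{N_h(\Omega)} \lesssim \|\bfv\|^2_{m_h} = m_h(\bfv,\bfv),
\end{equation}
which is precisely the claimed coercivity, so no further work is required in this direction.

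For continuity I would split $m_h(\bfv,\bfw) = (\bfv,\bfw)_\Omega + \gamma_m j_h(\bfv,\bfw)$ and bound each term. The mass term is handled by Cauchy--Schwarz together with the inclusion $\Omega \subset N_h(\Omega)$,
\begin{equation}
(\bfv,\bfw)_\Omega \leq \|\bfv\|_\Omega \|\bfw\|_\Omega \leq \|\bfv\|_{N_h(\Omega)} \|\bfw\|_{N_h(\Omega)}.
\end{equation}
Since $j_h$ is itself an inner product, Cauchy--Schwarz gives $j_h(\bfv,\bfw) \leq \|\bfv\|_{j_h} \|\bfw\|_{j_h}$, so it suffices to establish the inverse bound $\|\bfv\|_{j_h} \lesssim \|\bfv\|_{N_h(\Omega)}$ for all $\bfv \in \bfV_h$. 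For a single face $F$ shared by elements $K_1,K_2$ and each $1 \leq l \leq p$, I would apply a trace inequality $\|w\|^2_F \lesssim h^{-1}\|w\|^2_K$ followed by the polynomial inverse estimate $\|D_{\bfn_F}^l \bfv\|_K \lesssim h^{-l}\|\bfv\|_K$ to obtain
\begin{equation}
h^{2l+1} \|[D_{\bfn_F}^l \bfv]\|^2_F \lesssim h^{2l}\|D_{\bfn_F}^l \bfv\|^2_{K_1\cup K_2} \lesssim \|\bfv\|^2_{K_1 \cup K_2}.
\end{equation}
Summing over all faces in $\mcF_h(\partial \Omega)$ and using shape regularity to bound the number of times any element is counted, I arrive at $\|\bfv\|^2_{j_h} \lesssim \|\bfv\|^2_{N_h(\Omega)}$. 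Combining the two term-wise bounds, with $\gamma_m$ absorbed into the hidden constant, delivers the continuity estimate.

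The main obstacle, such as it is, lies in the continuity of the stabilization term: one must check that the weighting $h^{2l+1}$ in the definition (\ref{def:jh}) of $j_h$ is exactly calibrated so that the single negative power of $h$ from the trace inequality and the $2l$ negative powers from the inverse estimate are precisely cancelled, leaving no residual factor of $h$. One must also confirm that the overlap in summing over faces is uniformly bounded, which follows from shape regularity. The coercivity direction, by contrast, carries no difficulty once (\ref{eq:jhL2control}) is in hand.
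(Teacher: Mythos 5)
Your proposal is correct and follows essentially the same route as the paper: coercivity is read off directly from the inverse estimate (\ref{eq:jhL2control}), and continuity reduces to the bound $\|\bfv\|_{j_h} \lesssim \|\bfv\|_{N_h(\Omega)}$ for $\bfv \in \bfV_h$, which the paper also invokes (without spelling out the trace-plus-inverse-estimate computation that you supply, and which checks out: $h^{2l+1}\cdot h^{-1}\cdot h^{-2l}=1$).
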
 
\begin{proof} Coercivity follows 
directly from the inverse estimate (\ref{eq:jhL2control}) 
and to show continuity we use an inverse estimate to conclude 
that 
\begin{equation}
\|\bfv\|_{j_h} \lesssim \|\bfv\|_{N_h(\Omega)}, \qquad \bfv \in \bfV_h
\end{equation}

\end{proof}

\paragraph{Properties of $A_h$}
First we note that we have the following Poincar\'e inequality 
which shows that $\tn \cdot \tn_h$ is indeed a norm on $\bfV_h$ 
and plays an important role in the estimate of the condition number 
of the stiffness matrix associated with $A_h$.

\begin{lem}\label{lem:Poincare} The following Poincar\'e estimate holds
\begin{equation}\label{eq:Poincare}
\| \bfv \|_{N_h(\Omega)} 
\lesssim \tn \bfv \tn_h,
\qquad
\bfv \in \bfV_h
\end{equation}
\end{lem}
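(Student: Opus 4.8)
The plan is to prove \eqref{eq:Poincare} in two steps: first establish the bound on the physical domain $\Omega$, that is $\|\bfv\|_\Omega \lesssim \tn\bfv\tn_h$, by a Korn--Poincar\'e argument that exploits the control of the strain on $\Omega$ and of the trace on $\bndD$ already contained in $\tn\cdot\tn_h$; then transfer this $L^2(\Omega)$ estimate to the larger active domain $N_h(\Omega)$ by invoking the inverse estimate \eqref{eq:jhL2control}. Throughout I use that each constituent of $\tn\bfv\tn_h$ is individually dominated by $\tn\bfv\tn_h$, and that $h \le h_0$ so that positive powers of $h$ may be absorbed into the hidden constant.

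For the first step, note that $\bfv \in \bfV_h$ is a continuous piecewise polynomial on $N_h(\Omega)$, so its restriction to $\Omega$ lies in $\bfH^1(\Omega)$. Under the standing assumption that any $\bfr \in \bfRM$ with $\bfr = \bfzero$ on $\bndD$ vanishes identically, a standard compactness argument built on Korn's second inequality yields the Korn--Poincar\'e estimate $\|\bfv\|_\Omega \lesssim \|\bfeps(\bfv)\|_\Omega + \|\bfv\|_{\bndD}$, with a constant depending only on $\Omega$ and $\bndD$. The strain term is controlled since $2\mu\|\bfeps(\bfv)\|^2_\Omega \le a(\bfv,\bfv) \le \|\bfv\|^2_{a_h} \le \tn\bfv\tn^2_h$. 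For the trace term, the definition of the norm gives $h^{-1}\tn\bfv\tn^2_{b_h} \le \tn\bfv\tn^2_h$, and since $\tn\bfv\tn^2_{b_h} = b_h(\bfv,\bfv) \ge 2\mu\|\bfv\|^2_{\bndD}$ we get $\|\bfv\|_{\bndD} \lesssim h^{1/2}\tn\bfv\tn_h \lesssim \tn\bfv\tn_h$. Combining the two bounds gives $\|\bfv\|_\Omega \lesssim \tn\bfv\tn_h$.

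For the second step, I would apply \eqref{eq:jhL2control} together with the identity $\|\bfv\|^2_{m_h} = \|\bfv\|^2_\Omega + \|\bfv\|^2_{j_h}$ to write $\|\bfv\|_{N_h(\Omega)} \lesssim \|\bfv\|_{m_h} \lesssim \|\bfv\|_\Omega + \|\bfv\|_{j_h}$. The first term was just bounded. For the second, the favorable weight in $\|\bfv\|^2_{a_h} = \|\nabla\bfv\|^2_\Omega + h^{-2}\|\bfv\|^2_{j_h}$ gives $\|\bfv\|_{j_h} \le h\,\|\bfv\|_{a_h} \le h\,\tn\bfv\tn_h \lesssim \tn\bfv\tn_h$. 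Hence $\|\bfv\|_{N_h(\Omega)} \lesssim \tn\bfv\tn_h$, which is the claimed estimate.

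I expect the first step to be the main obstacle. The norm $\tn\cdot\tn_h$ controls only the linearized strain $\bfeps(\bfv)$ on $\Omega$, not the full gradient, so a plain Poincar\'e inequality does not suffice and the Korn--Poincar\'e inequality must be used; this is precisely where the hypothesis that the Dirichlet data pins down the rigid body part is essential, since it is what makes $\|\bfeps(\cdot)\|_\Omega + \|\cdot\|_{\bndD}$ a genuine norm on $\bfH^1(\Omega)$. If instead $\bndD = \emptyset$, this step breaks down: one then drops the boundary term and reads \eqref{eq:Poincare} in the quotient space $\bfH^1(\Omega)/\bfRM$, consistent with the well-posedness discussion for a free elastic body. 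The remaining manipulations are routine scalings in $h$ and direct uses of the inverse estimates from the preceding lemma.
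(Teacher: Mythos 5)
Your argument is correct. Note first that the paper states Lemma \ref{lem:Poincare} without providing a proof, so there is no ``paper route'' to compare against; your two-step decomposition is, however, exactly the argument the surrounding machinery is built to support. Step one --- the Korn--Poincar\'e inequality $\|\bfv\|_\Omega \lesssim \|\bfeps(\bfv)\|_\Omega + \|\bfv\|_{\bndD}$ on the \emph{physical} domain $\Omega$, valid with an $h$-independent constant precisely because of the standing assumption that $\bfr\in\bfRM$ with $\bfr=\bfzero$ on $\bndD$ implies $\bfr=\bfzero$ --- is the essential content, and you correctly identify that working on $\Omega$ rather than on $N_h(\Omega)$ is what keeps the constant independent of how the boundary cuts the mesh. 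Step two, transferring from $\Omega$ to $N_h(\Omega)$ via the inverse estimate \eqref{eq:jhL2control} and absorbing $\|\bfv\|_{j_h}\le h\|\bfv\|_{a_h}$, is the intended role of the ghost-penalty term and is carried out correctly; the same bound on $\|\bfv\|_{j_h}$ survives under the weaker alternative stabilization \eqref{def:jh-alt}, so your proof covers that variant too. Two cosmetic caveats: your hidden constants absorb $\mu$, $\lambda$ and the stabilization parameters $\gamma_m,\gamma_a$ (harmless here, but worth stating since the paper tracks $2\mu$ and $\lambda$ explicitly elsewhere), and the bound $2\mu\|\bfeps(\bfv)\|^2_\Omega\le a(\bfv,\bfv)\le\|\bfv\|^2_{a_h}$ uses that $\lambda\ge 0$, which holds for the Lam\'e parameter in the ranges considered. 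Your closing remark about the $\bndD=\emptyset$ case correctly delimits the scope of the lemma.
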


Next we turn to continuity and coercivity of $A_h$.

\begin{lem}\label{lem:Ah} The bilinear form $A_h$, defined by 
(\ref{def:ah}), is continuous
\begin{equation}\label{eq:ahcont}
A_h(\bfv,\bfw) \lesssim \tn \bfv \tn_{h} \tn \bfw \tn_{h},
\qquad 
\forall \bfv,\bfw \in \bfV + \bfV_h  
\end{equation}
and, if the stabilization parameter $\beta$ is large 
enough, coercive 
\begin{equation}\label{eq:ahcoer}
 \tn \bfv \tn_h^2 \lesssim a_h(\bfv,\bfv), 
\qquad
\forall
\bfv \in \bfV_h  
\end{equation}
\end{lem}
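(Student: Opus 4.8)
The plan is to treat the two assertions separately, since they rest on quite different tools: continuity \eqref{eq:ahcont} is a term-by-term Cauchy--Schwarz estimate that uses no inverse inequalities and therefore holds on all of $\bfV+\bfV_h$, whereas coercivity \eqref{eq:ahcoer} hinges on an inverse trace estimate available only on $\bfV_h$, and it is exactly there that the stabilization does its work.

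For continuity I would expand $A_h$ according to \eqref{def:Ah} and bound each of its four contributions by $\tn\cdot\tn_h$. The bulk term obeys $a_h(\bfv,\bfw)\le\|\bfv\|_{a_h}\|\bfw\|_{a_h}$ since $a_h$ is a symmetric positive form, and the penalty term is controlled by $\beta h^{-1}b_h(\bfv,\bfw)\le\beta\big(h^{-1/2}\tn\bfv\tn_{b_h}\big)\big(h^{-1/2}\tn\bfw\tn_{b_h}\big)$. Each Nitsche consistency term is split by inserting $h^{\pm1/2}$,
\begin{equation}
(\bfsig(\bfv)\cdot\bfn,\bfw)_{\partial\Omega_D}\le\big(h^{1/2}\|\bfsig(\bfv)\|_{\partial\Omega_D}\big)\big(h^{-1/2}\|\bfw\|_{\partial\Omega_D}\big),
\end{equation}
where the first factor is $\le\tn\bfv\tn_h$ and the second is $\lesssim h^{-1/2}\tn\bfw\tn_{b_h}\le\tn\bfw\tn_h$, using $\tn\bfw\tn^2_{b_h}\ge2\mu\|\bfw\|^2_{\partial\Omega_D}$ and $|\bfsig(\bfv)\cdot\bfn|\le|\bfsig(\bfv)|$. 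Adding the four bounds gives \eqref{eq:ahcont}.

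For coercivity I would set $\bfw=\bfv$ and use symmetry of the boundary pairing to write
\begin{equation}
A_h(\bfv,\bfv)=\|\bfv\|^2_{a_h}-2(\bfsig(\bfv)\cdot\bfn,\bfv)_{\partial\Omega_D}+\beta h^{-1}\tn\bfv\tn^2_{b_h}.
\end{equation}
The only indefinite term is the Nitsche cross term, which I would bound by a weighted Young inequality,
\begin{equation}
2\big|(\bfsig(\bfv)\cdot\bfn,\bfv)_{\partial\Omega_D}\big|\le\epsilon\,h\|\bfsig(\bfv)\|^2_{\partial\Omega_D}+\epsilon^{-1}h^{-1}\tn\bfv\tn^2_{b_h},
\end{equation}
yielding $A_h(\bfv,\bfv)\ge\|\bfv\|^2_{a_h}-\epsilon\,h\|\bfsig(\bfv)\|^2_{\partial\Omega_D}+(\beta-\epsilon^{-1})h^{-1}\tn\bfv\tn^2_{b_h}$. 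The negative boundary-stress term must be absorbed into the bulk energy, which requires the inverse trace estimate
\begin{equation}\label{plan:invtrace}
h\|\bfsig(\bfv)\|^2_{\partial\Omega_D}\lesssim\|\bfv\|^2_{a_h},\qquad\bfv\in\bfV_h,
\end{equation}
with some constant $C_I$. Granting \eqref{plan:invtrace}, I would fix $\epsilon$ so small that $\epsilon C_I\le\tfrac12$ and then choose $\beta$ large enough that $\beta-\epsilon^{-1}\ge c_0>0$, obtaining $A_h(\bfv,\bfv)\gtrsim\|\bfv\|^2_{a_h}+h^{-1}\tn\bfv\tn^2_{b_h}$; the missing third norm contribution $h\|\bfsig(\bfv)\|^2_{\partial\Omega_D}$ is then recovered once more from \eqref{plan:invtrace}, giving $\tn\bfv\tn^2_h\lesssim A_h(\bfv,\bfv)$.

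The main obstacle is establishing \eqref{plan:invtrace} with a constant independent of how $\partial\Omega_D$ cuts the mesh, and I would do this in two steps. First, a fictitious-domain inverse trace inequality for the polynomial $\bfsig(\bfv)$ on the \emph{full} elements meeting $\partial\Omega_D$ gives $h\|\bfsig(\bfv)\|^2_{\partial\Omega_D}\lesssim\|\bfsig(\bfv)\|^2_{\cup K}$, with $K$ ranging over $\mcK_h(\partial\Omega_D)$; the constant is cut-independent precisely because the trace is taken of a polynomial living on all of $K$ rather than only on $\Omega\cap K$. Second, using $\|\bfsig(\bfv)\|^2\lesssim2\mu\|\bfeps(\bfv)\|^2+\lambda\|\mathrm{tr}\,\bfeps(\bfv)\|^2$, I would pass from the full cut elements to controllable quantities by the energy inverse estimate \eqref{eq:jhenergy-control}---or rather the sharper bound $2\mu\|\bfeps(\bfv)\|^2_{N_h(\Omega)}+\lambda\|\mathrm{tr}\,\bfeps(\bfv)\|^2_{N_h(\Omega)}\lesssim\|\bfv\|^2_{a_h}$ that its proof actually delivers (the elasticity analogue of \eqref{eq:jhgradcontrol}), so as to avoid circularity. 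This is exactly the point where the stronger, $h^{-2}$-weighted Dirichlet stabilization $j_{h,D}$ from the Remark is indispensable: the two-element identity underlying that estimate produces face weights $h^{2j-1}=h^{-2}\cdot h^{2j+1}$, so only with the factor $h^{-2}$ present in $a_h$ is the control transferred from the boundary elements into the interior bounded by $\|\bfv\|^2_{a_h}$. Combining the two steps yields \eqref{plan:invtrace} and closes the coercivity argument.
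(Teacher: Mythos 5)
Your proposal is correct and follows essentially the same route as the paper: continuity by term-by-term Cauchy--Schwarz, and coercivity via Young's inequality on the Nitsche cross term, a cut-position-independent elementwise inverse trace inequality to pass from $\partial\Omega_D\cap K$ to the full element $K$, the stabilization-based inverse estimate to transfer that control into $\|\bfv\|^2_{a_h}$, and finally choosing $\beta$ large. Your explicit remark about using the elasticity analogue of \eqref{eq:jhgradcontrol} rather than \eqref{eq:jhenergy-control} as stated, to avoid circularity, is a correct reading of what the paper's proof actually relies on.
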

\begin{proof}
We show continuity of $A_h$ on $\bfV + \bfV_h$, directly 
using the Cauchy-Schwarz inequality. To show coercivity we first 
note that using the Cauchy-Schwarz inequality 
\begin{align}
\Big|(\bfsig(\bfv)\cdot n,\bfv)_{\Gamma_D}\Big| 
&\leq
2\mu \|\bfeps(\bfv)\|_{\partial \Omega_D} \|\bfv\|_{\partial \Omega_D}
+ 
\lambda \|\text{tr}\bfeps(\bfv)\|_{\partial \Omega_D} \|\bfv\cdot \bfn\|_{\partial \Omega_D}
\\
&\lesssim
\Big( 2\mu \|\bfeps(\bfv)\|_{\partial \Omega_D}^2
+ 
\lambda \|\text{tr}\bfeps(\bfv)\|_{\partial \Omega_D}^2\Big)^{1/2} 
\tn \bfv \tn_{b_h}
\\
&\lesssim
\delta h \Big( 2\mu \|\bfeps(\bfv)\|_{\partial \Omega_D}^2
+ 
\lambda \|\text{tr}\bfeps(\bfv)\|_{\partial \Omega_D}^2\Big)
+  
\delta^{-1}h^{-1} \tn \bfv \tn^2_{b_h}
\end{align}
for each $\delta>0$. Using the elementwise inverse 
inequality
\begin{equation}
h \Big( 2\mu \|\bfeps(\bfv)\|_{\partial \Omega_D\cap K}^2
+ 
\lambda \|\text{tr}\bfeps(\bfv)\|_{\partial \Omega_D\cap K}^2\Big)
\lesssim 
2\mu \|\bfeps(\bfv)\|_{K}^2
+ 
\lambda \|\text{tr}\bfeps(\bfv)\|_{K}^2
\end{equation}
where the hidden constant is independent of the position 
of the boundary $\partial \Omega_D$, see \cite{HaHaLa03}, followed 
by the inverse estimate (\ref{eq:jhgradcontrol}) we conclude 
that 
\begin{equation}
h \Big( 2\mu \|\bfeps(\bfv)\|_{\partial \Omega_D\cap K}^2
+ 
\lambda \|\text{tr}\bfeps(\bfv)\|_{\partial \Omega_D\cap K}^2\Big)
\lesssim \tn \bfv \tn^2_{a_h}
\end{equation}
This is the crucial inverse inequality required in the 
standard proof, see \cite{LarBen13} Section 14.2, of 
coercivity of $A_h$ on $\bfV_h$, which holds if $\beta$ 
is large enough. Note that the size of the penalty parameter 
is completely independent of the actual position of the 
domain $\Omega$ in the background mesh.
\end{proof}

\paragraph{Properties of  $L_h$} The form $L_h$ is continuous 
on $\bfV_h$, 
\begin{equation}\label{eq:Lh}
L_h(\bfv) \lesssim h^{-1/2} \tn \bfv \tn_h, \qquad \forall \bfv \in \bfV_h
\end{equation}

\begin{rem}
For fixed $h \in (0,h_0]$ we may then use the coercivity and continuity of $A_h$, 
see Lemma \ref{lem:Ah}, the continuity (\ref{eq:Lh}), and apply the Lax-Milgram 
lemma to conclude that there exists a unique solution to the finite element 
problem (\ref{eq:standard-fem}).
\end{rem}

\subsection{Conditioning of the Mass and Stiffness Matrices}

Let $\{\bfvarphi_{j}\}_{j=1}^{N}$ be the standard Lagrange basis 
in $\bfV_{h}$ and $N$ the dimension of 
$\bfV_{h}$. Let  $\bfv = \sum_{j=1}^{N} \widehat{v}_j \bfvarphi_{j}$ 
be the expansion of $\bfv$ in the Lagrange basis. We 
define the mass and stiffness matrices, ${\bf M}$ and 
${\bf A}$ with elements
\begin{equation}\label{def:mass-stiffness-matrix}
m_{ij} = m_h(\bfvarphi_i,\bfvarphi_j),\qquad 
a_{ij} = A_h(\bfvarphi_i,\bfvarphi_j), 
\end{equation}
which is equivalent to the identities 
\begin{equation}
m_h(\bfv,\bfw) = (\bbM \hatv,\hatw)_{\IR^N},\qquad
A_h(\bfv,\bfw) = (\bbA \hatv,\hatw)_{\IR^N}, \qquad \forall \bfv,\bfw \in \bfV_h
\end{equation}
where $(\cdot,\cdot)_{\IR^N}$ is the inner product in $\IR^N$. 

Recall that the condition number of an $N\times N$ matrix 
$\bf B$ is defined by $\kappa({\bf B}) 
= \| {\bf B} \|_{\IR^N} \|{\bf B}^{-1}\|_{\IR^N}$, 
where $\|\cdot\|_{\IR^N}$ is the Euclidian norm in $\IR^N$.
We then have the following result.

\begin{thm} The condition numbers of mass and stiffness 
matrices, ${\bf M}$ and ${\bf A}$, defined by 
(\ref{def:mass-stiffness-matrix}), satisfy the estimates
\begin{equation}\label{eq:condestimates}
\kappa({\bf M}) \sim 1, \qquad \kappa({\bf A}) \lesssim h^{-2}
\end{equation}
\end{thm}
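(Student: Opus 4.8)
The plan is to reduce both condition numbers to two-sided bounds on the Rayleigh quotients of the matrices, using that $\bbM$ and $\bbA$ are symmetric and positive definite (the latter by the coercivity in Lemma \ref{lem:Ah}). For a symmetric positive definite matrix $\mathbf{B}$ we have $\kappa(\mathbf{B}) = \lambda_{\max}(\mathbf{B})/\lambda_{\min}(\mathbf{B})$, so it suffices to estimate the extreme eigenvalues, i.e.\ to bound $(\mathbf{B}\hatv,\hatv)_{\IR^N}/\|\hatv\|^2_{\IR^N}$ from above and below uniformly in $h$ and, crucially, uniformly in the position of $\partial\Omega$ relative to the background mesh. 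The single ingredient that makes this possible is the scaling equivalence
\begin{equation}
h^{d} \|\hatv\|^2_{\IR^N} \sim \|\bfv\|^2_{N_h(\Omega)}, \qquad \bfv = \sum_{j} \widehat{v}_j \bfvarphi_j \in \bfV_h,
\end{equation}
which holds because $N_h(\Omega)$ is a union of \emph{uncut}, shape-regular, quasiuniform elements on which $\{\bfvarphi_j\}$ is a standard Lagrange basis; this is precisely why the analysis is carried out in the norm over $N_h(\Omega)$ rather than over $\Omega$.

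For the mass matrix I would combine this scaling with Lemma \ref{lem:mh}. The continuity and coercivity of $m_h$ give the two-sided bound $(\bbM\hatv,\hatv)_{\IR^N} = m_h(\bfv,\bfv) \sim \|\bfv\|^2_{N_h(\Omega)} \sim h^{d}\|\hatv\|^2_{\IR^N}$. Hence both $\lambda_{\max}(\bbM)$ and $\lambda_{\min}(\bbM)$ scale like $h^{d}$ with constants independent of the cut, and therefore $\kappa(\bbM) \sim 1$.

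For the stiffness matrix I would sandwich $(\bbA\hatv,\hatv)_{\IR^N} = A_h(\bfv,\bfv)$ between multiples of $\tn\bfv\tn_h^2$ from both sides using Lemma \ref{lem:Ah}. The lower eigenvalue bound follows from the Poincar\'e inequality of Lemma \ref{lem:Poincare}: by coercivity $A_h(\bfv,\bfv) \gtrsim \tn\bfv\tn_h^2 \gtrsim \|\bfv\|^2_{N_h(\Omega)} \sim h^{d}\|\hatv\|^2_{\IR^N}$, so $\lambda_{\min}(\bbA) \gtrsim h^{d}$. For the upper eigenvalue bound I would estimate each term of $\tn\bfv\tn_h^2$ by $h^{-2}\|\bfv\|^2_{N_h(\Omega)}$: the gradient term by a standard inverse inequality, the scaled stabilization term $h^{-2}\|\bfv\|^2_{j_h}$ by the bound $\|\bfv\|_{j_h}\lesssim\|\bfv\|_{N_h(\Omega)}$ already used in the proof of Lemma \ref{lem:mh}, and the Dirichlet boundary terms $h\|\bfsig(\bfv)\|^2_{\partial\Omega_D}$ and $h^{-1}\tn\bfv\tn^2_{b_h}$ by the elementwise trace inverse inequalities appearing in the proof of Lemma \ref{lem:Ah}, whose constants are independent of how $\partial\Omega_D$ cuts the elements. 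This yields $A_h(\bfv,\bfv)\lesssim\tn\bfv\tn_h^2\lesssim h^{-2}\|\bfv\|^2_{N_h(\Omega)}\sim h^{d-2}\|\hatv\|^2_{\IR^N}$, so $\lambda_{\max}(\bbA)\lesssim h^{d-2}$ and consequently $\kappa(\bbA)=\lambda_{\max}(\bbA)/\lambda_{\min}(\bbA)\lesssim h^{-2}$.

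The genuinely delicate point, and the reason the preceding lemmas are needed, is that every constant must be independent of the position of the boundary in the background mesh. On a cut element the physical mass $\|\bfv\|_\Omega$ can be arbitrarily small relative to $\|\hatv\|_{\IR^N}$, which would destroy the lower eigenvalue bounds and force $\kappa$ to blow up. The stabilization form $j_h$ removes this difficulty: through the inverse estimates of the first lemma of this section and through Lemmas \ref{lem:mh} and \ref{lem:Poincare}, control over the full-mesh norm $\|\bfv\|_{N_h(\Omega)}$ is recovered from the physical-domain quantities plus $j_h$. Thus the only work beyond invoking these lemmas is the elementary scaling equivalence and the collection of standard inverse inequalities, none of which require new ideas.
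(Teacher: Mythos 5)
Your proposal is correct and follows essentially the same route as the paper: the norm equivalence $h^{d}\|\hatv\|^2_{\IR^N}\sim\|\bfv\|^2_{N_h(\Omega)}$ combined with the coercivity/continuity of $m_h$ and $A_h$, the Poincar\'e inequality, and the inverse bound $\tn\bfv\tn_h\lesssim h^{-1}\|\bfv\|_{N_h(\Omega)}$. The only (harmless) difference is that you pass through Rayleigh quotients and the identity $\kappa=\lambda_{\max}/\lambda_{\min}$, which uses the symmetry of $\bbA$, whereas the paper bounds $\|\bbA\|_{\IR^N}$ and $\|\bbA^{-1}\|_{\IR^N}$ directly via a duality argument $\tn\bfv\tn_h\lesssim\sup_{\bfw}A_h(\bfv,\bfw)/\tn\bfw\tn_h$ that would also cover nonsymmetric forms.
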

The proof uses the approach in \cite{ErnGue06} and builds 
on the standard equivalence of norms
\begin{equation}\label{eq:RNL2equiv}
h^{d} \|\widehat{\bfv}\|^2_{\IR^N} 
\sim 
\| \bfv \|^2_{N_h(\Omega)} 
\end{equation}
where $\widehat{\bfv} \in \IR^N$ is the coefficient 
vector with elements $\widehat{v}_i$. 

\begin{proof} We present the proof for the bound on the condition 
number of the stiffness matrix. The bound for the condition number of the 
mass matrix follows in the same way but is slightly simpler since only the equivalence (\ref{eq:RNL2equiv}) and the coercivity and continuity properties 
in Lemma \ref{lem:mh} are used, while in the the case of the stiffness matrix 
we also employ the Poincar\'e inequality and the inverse bound
\begin{equation}\label{eq:cond-inverse}
\tn \bfv \tn_h \lesssim h^{-1} \| \bfv \|_{N_h(\Omega)}, \qquad \bfv \in \bfV_h
\end{equation}

We have  
\begin{equation}
\kappa(\bbA) = \| {\bbA}\|_{\IR^N}\| {\bbA}^{-1}\|_{\IR^N}
\end{equation}
Starting with the estimate of $\| {\bf A}\|_{\IR^N}$ we obtain, 
using the definition of the stiffness matrix and 
the inverse bound (\ref{eq:cond-inverse}), 
\begin{align}
({\bf A} \hatv, \hatw)_{\IR^N} &= A_h(\bfv,\bfw) 
\lesssim \tn \bfv \tn_h \tn \bfw \tn_h 
\\ 
&\qquad 
\lesssim h^{-2} \| \bfv \|_{N_h(\Omega)}  \| \bfw \|_{N_h(\Omega)}
\lesssim h^{d-2} \| \hatv \|_{\IR^N} \| \hatw \|_{\IR^N}
\end{align}
We conclude that 
$\| \bbA \hatv \|_{\IR^N} \lesssim h^{d-2} \| \hatv \|_{\IR^N}$, 
and thus
\begin{equation}\label{eq:cond-A}
\| \bbA \|_{\IR^N} \lesssim h^{d-2}
\end{equation}

Next to estimate $\| {\bf A}^{-1}\|_{\IR^N}$ we note that 
\begin{align}
h^{d/2}\| \hatv \|_{\IR^N} 
&\lesssim \| \bfv \|_{N_h(\Omega)} 
\lesssim \tn \bfv \tn_h 
\lesssim \sup_{\bfw \in \bfV_h \setminus \{0\}}  
\frac{A_h(\bfv,\bfw)}{\tn \bfw \tn_h}
\\ 
&
= \sup_{\bfw \in \bfV_h \setminus \{0\}}
\frac{(\bbA \hatv, \hatw)_{\IR^N}}{\|\hatw\|_{\IR^N}} 
\frac{\|\hatw\|_{\IR^N}}{\tn \bfw \tn_h}
\lesssim 
\sup_{w \in V_h \setminus \{0\}}
h^{-d/2} \frac{(\bbA \hatv, \hatw)_{\IR^N}}{\|\hatw\|_{\IR^N}} 
\end{align} 
where in the last step we used th eequivalence (\ref{eq:RNL2equiv}) and 
the Poincar\'e estimate (\ref{eq:Poincare}) to conclude that
\begin{equation}
\|\hatw\|_{\IR^N} 
\lesssim h^{-d/2} \| \bfw \|_{N_h(\Omega)} 
\lesssim h^{-d/2} \tn \bfw \tn_h
\end{equation}
Thus we arrive at 
\begin{equation}
h^d \| \hatv \|_{\IR^N} \lesssim \| \bbA \hatv \|_{\IR^N}
\end{equation}
and setting $\hatv = \bbA^{-1} \hatw$ we find that 
\begin{equation}
\| \bbA^{-1} \hatw \|_{\IR^N} \lesssim h^{-d} \| \hatw \|_{\IR^N} 
\end{equation}
and thus 
\begin{equation}\label{eq:cond-Ainv}
\| \bbA^{-1} \|_{\IR^N} \lesssim h^{-d} 
\end{equation}

Combining (\ref{eq:cond-A}) and (\ref{eq:cond-Ainv}) the 
estimate of the condition number for the stiffness matrix follows.
\end{proof}

\subsection{Interpolation}
To define the interpolation operator $\bfpi_h$ we use a bounded 
extension operator $E:H^s(\Omega) \rightarrow H^s(U(\Omega))$, 
where $U(\Omega)$ is a neighborhood of $\Omega$ such that 
$N_h(\Omega)\subset U(\Omega)$, $h\in (0,h_0]$, and then using 
the Scott-Zhang interpolation operator 
$\bfpi_{h,SZ}:\bfH^1(N_h(\Omega))\rightarrow \bfV_h$, we may 
define 
\begin{equation}
\bfpi_h : H^1(\Omega) \ni \bfu \mapsto \bfpi_{h,SZ} E \bfu
\in \bfV_h 
\end{equation}
Using the simplified notation $\bfu = E \bfu$ on $U(\Omega)$ we have
have the interpolation error estimate 
\begin{equation}\label{eq:interpol-K}
\| \bfu - \bfpi_h \bfu \|_{H^m(K)} \lesssim h^{s - m} \| \bfu \|_{H^s(N(K))}, \qquad
0\leq m \leq s \leq p+1  
\end{equation}
Summing over the elements $K \in \mcK_h$ and using the stability 
of the extension operator we obtain
\begin{equation}\label{eq:interpol-Nh}
\| \bfu - \bfpi_h \bfu \|_{H^m(N_h(K))} 
\lesssim h^{s - m} \| \bfu \|_{H^s(\Omega)}, \qquad
0\leq m \leq s \leq p+1  
\end{equation}
Furthermore, we have the following 
estimate for the interpolation error in the energy norm
\begin{equation}\label{eq:interpol-energy}
\tn \bfu - \bfpi_h \bfu \tn_h 
\lesssim h^k \| \bfu \|_{H^{k+1}(\Omega)} 
\end{equation}
To verify (\ref{eq:interpol-Nh}) we note that using the trace 
inequality 
\begin{equation}
\| \bfv \|^2_{\partial \Omega_D \cap K } \lesssim h^{-1} \| \bfv \|^2_{K } 
+ h \| \nabla \bfv \|^2_{K } 
\end{equation}
where the hidden constant is independent of the position of the 
boundary $\partial \Omega_D$ in element $K$ see \cite{HaHaLa03} for further details, 
we may estimate the boundary terms as follows
\begin{align}
h^{-1} \| \bfv \|^2_{b_h}
&\lesssim 
h^{-2} \| \bfv \|^2_{\mcK_h(\partial \Omega_D)} + \| \nabla \bfv \|^2_{\mcK_h(\partial \Omega_D)}
\end{align}
\begin{align}
h \| \bfsig(\bfv) \cdot \bfn \|^2_{\partial \Omega_D} 
\lesssim 
\| \nabla \bfv \|^2_{\mcK_h(\partial \Omega_D)} + h^2 \| \nabla^2 \bfv  \|^2_{\mcK_h(\partial \Omega_D)}
\end{align}
where $\mcK_h(\partial \Omega_D) = \{ K \in \mcK_h : K \cap \partial \Omega_D \neq \emptyset\}$.
Using the standard trace inequality 
\begin{equation}
\| \bfv \|^2_{F } \lesssim h^{-1} \| \bfv \|^2_{K } 
+ h \| \nabla \bfv \|^2_{K } 
\end{equation}
where $F$ is a face associated with element $K$ we obtain
\begin{equation}
h^{-2} \| \bfv \|^2_{j_h} 
\lesssim 
h^{-2} \| \bfv \|^2_{\mcK_h(\mcF_h(\partial \Omega))}
\end{equation}
where $\mcK_h(\mcF_h(\partial \Omega))$ is the set of elements that have a face in 
$\mcF_h(\partial \Omega)$, i.e. an element with a stabilized face. Thus we conclude that 
we have the estimate 
\begin{equation}
\tn \bfv \tn^2_h \lesssim h^{-2} \| \bfv \|_{N_h(\Omega)} + \|\nabla \bfv \|^2_{N_h(\Omega)} 
+ h^2 \| \nabla^2 \bfv \|_{N_h(\Omega)}  
\end{equation}
Finally, setting 
$\bfv = \bfu - \bfpi_h \bfu$ and using (\ref{eq:interpol-K}) and the stability of the extension 
operator (\ref{eq:interpol-energy}) follows.

\subsection{A Priori Error Estimates}

\begin{thm} Let $\bfu$ be the solution to problem 
(\ref{eq:standard-a}-\ref{eq:standard-c}) and $\bfu_h$ 
the corresponding finite element approximation defined by 
(\ref{eq:standard-fem}), then the following a priori 
error estimates hold
\begin{equation}
\tn \bfu - \bfu_h\tn \lesssim h^k\| \bfu \|_{H^{k+1}(\Omega)}, 
\qquad 
\| \bfu - \bfu_h\|_{\Omega} \lesssim h^{k+1}\| \bfu \|_{H^{k+1}(\Omega)},
\end{equation}
\end{thm}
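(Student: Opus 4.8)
The plan is to prove the energy-norm estimate by a standard C\'ea-type argument and then obtain the $L^2$ estimate by Aubin--Nitsche duality. Both rest on Galerkin orthogonality, so I would establish that first. Extending the exact solution to $U(\Omega)$ by $E$ and still writing $\bfu=E\bfu$, the solution is smooth enough (since $\bfu\in H^{k+1}$ with $k+1\geq 2$) that all jumps of normal derivatives across interior faces vanish, whence $j_h(\bfu,\bfv)=0$ and $a_h(\bfu,\bfv)=a(\bfu,\bfv)$. Integrating $a(\bfu,\bfv)$ by parts and using $-\bfsig(\bfu)\cdot\nabla=\bff$, $\bfsig(\bfu)\cdot\bfn=\bfg_N$ on $\bndN$, and $\bfu=\bfg_D$ on $\bndD$, the interior Dirichlet boundary term $(\bfsig(\bfu)\cdot\bfn,\bfv)_{\bndD}$ cancels exactly against the symmetrizing term in $A_h$, leaving $A_h(\bfu,\bfv)=L_h(\bfv)$ for all $\bfv\in\bfV_h$. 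Subtracting the discrete equation \eqref{eq:standard-fem} gives
\[
A_h(\bfu-\bfu_h,\bfv)=0,\qquad \forall\,\bfv\in\bfV_h.
\]

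For the energy estimate (here $\tn\cdot\tn=\tn\cdot\tn_h$) I would split $\bfu-\bfu_h=(\bfu-\bfpi_h\bfu)+(\bfpi_h\bfu-\bfu_h)$ and estimate the discrete part by coercivity (Lemma~\ref{lem:Ah}), Galerkin orthogonality (to replace $\bfu_h$ by $\bfu$), and continuity (Lemma~\ref{lem:Ah}):
\[
\tn\bfpi_h\bfu-\bfu_h\tn_h^2 \lesssim A_h(\bfpi_h\bfu-\bfu_h,\bfpi_h\bfu-\bfu_h)
= A_h(\bfpi_h\bfu-\bfu,\bfpi_h\bfu-\bfu_h)
\lesssim \tn\bfpi_h\bfu-\bfu\tn_h\,\tn\bfpi_h\bfu-\bfu_h\tn_h.
\]
Cancelling one factor, using the triangle inequality, and invoking the interpolation estimate \eqref{eq:interpol-energy} yields $\tn\bfu-\bfu_h\tn_h\lesssim h^k\|\bfu\|_{H^{k+1}(\Omega)}$.

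For the $L^2$ estimate I would use duality. Set $\bfe=\bfu-\bfu_h$ and let $\bfPhi$ solve the dual elasticity problem $-\bfsig(\bfPhi)\cdot\nabla=\bfe$ in $\Omega$, $\bfsig(\bfPhi)\cdot\bfn=\bfzero$ on $\bndN$, $\bfPhi=\bfzero$ on $\bndD$. Since $A_h$ is symmetric, the same consistency computation as above (now with homogeneous data) gives $A_h(\bfPhi,\bfv)=(\bfe,\bfv)_\Omega$, hence $A_h(\bfv,\bfPhi)=(\bfe,\bfv)_\Omega$ for all $\bfv\in\bfV+\bfV_h$; in particular $\|\bfe\|_\Omega^2=A_h(\bfe,\bfPhi)$. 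Using Galerkin orthogonality to subtract $\bfpi_h\bfPhi$, then continuity, the interpolation estimate \eqref{eq:interpol-energy} at order $k=1$, and elliptic regularity $\|\bfPhi\|_{H^2(\Omega)}\lesssim\|\bfe\|_\Omega$, I obtain
\[
\|\bfe\|_\Omega^2 = A_h(\bfe,\bfPhi-\bfpi_h\bfPhi)
\lesssim \tn\bfe\tn_h\,\tn\bfPhi-\bfpi_h\bfPhi\tn_h
\lesssim \tn\bfe\tn_h\,h\,\|\bfe\|_\Omega.
\]
Dividing by $\|\bfe\|_\Omega$ and inserting the energy estimate gives the claimed $O(h^{k+1})$ bound.

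\emph{Main obstacles and subtle points.} The consistency step relies on $j_h(\bfu,\cdot)=0$, which requires the extended solution to be smooth enough across the stabilized faces, together with the exact cancellation of the Nitsche boundary terms. The duality argument presupposes $H^2$ elliptic regularity for the elasticity operator on $\Omega$, which in turn requires sufficient regularity of $\Omega$ (e.g.\ convex or smooth). A final technical requirement is that the norm $\tn\cdot\tn_h$ and the continuity bound of Lemma~\ref{lem:Ah} are valid on $\bfV+\bfV_h$, so that $\bfe$ and $\bfPhi-\bfpi_h\bfPhi$, which do not lie in $\bfV_h$, can legitimately be measured in this norm.
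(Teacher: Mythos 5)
Your proof is correct and follows essentially the same route as the paper: the energy estimate is obtained by exactly the same triangle-inequality / coercivity / Galerkin-orthogonality / continuity chain combined with the interpolation bound (\ref{eq:interpol-energy}), and the $L^2$ bound by the standard Aubin--Nitsche duality that the paper invokes only by citation. The sole difference is that you spell out the consistency computation behind Galerkin orthogonality and the details of the duality argument, which the paper delegates to the references.
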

\begin{proof} Using the coercivity and continuity properties of $A_h$, 
Galerkin orthogonality, and the interpolation estimate (\ref{eq:interpol-energy}) 
we prove the error estimates using the standard approach, see \cite{LarBen13}.

Adding and subtracting an interpolant and using the triangle inequality we have 
\begin{align}
\tn \bfu - \bfu_h \tn_h 
&\lesssim 
\tn \bfu - \bfpi_h \bfu \tn_h + \tn \bfpi_h \bfu - \bfu_h \tn_h 
\\
&\lesssim 
h^k \| \bfu \|_{H^{k+1}(\Omega)} + \tn \bfpi_h \bfu - \bfu_h \tn_h
\end{align}
For the second term we use coercivity of $A_h$, 
\begin{align}
\tn \bfpi_h \bfu - \bfu_h \tn_h^2 &\lesssim A(\bfpi_h \bfu - \bfu_h, \bfpi_h \bfu - \bfu_h) 
\\
&\lesssim 
A(\bfpi_h \bfu - \bfu, \bfpi_h \bfu - \bfu_h)
\\
&\lesssim 
\tn \bfpi_h \bfu - \bfu\tn_h \tn \bfpi_h \bfu - \bfu_h \tn_h 
\end{align}
and thus 
\begin{equation}
\tn \bfpi_h \bfu - \bfu_h \tn_h 
\lesssim 
\tn \bfpi_h \bfu - \bfu\tn_h
\lesssim
h^k \| \bfu \|_{H^{k+1}(\Omega)}
\end{equation}

The $L^2$ estimate is proved using a standard duality argument, see \cite{LarBen13}. 
\end{proof}

\section{Examples}

The numerical results presented below include benchmarks of CutFEM for standard problems in linear elasticity, a study of the condition numbers of mass and stiffness matrices, and also various examples which demonstrate different modeling possibilities which are naturally combined with CutFEM.

Unless otherwise stated we in the numerical results below assume a linear elastic isotropic material with the material constants of steel, i.e., Young's modulus $E=200\cdot 10^9$ Pa, Poisson's ratio $\nu=0.3$, and density $\rho=7850$ kg/m$^3$. Default parameter values used in the CutFEM method are $\gamma_D = 1000\cdot p^2$ for the Nitche penalty parameter, $\gamma_{m} = \rho \cdot 10^{-4}$ for the mass matrix stabilization and $\gamma_a = (2\mu + \lambda)\cdot 10^{-4}$ for the stiffness matrix stabilization.
The finite elements used in are Lagrange elements with evenly distributed nodes; either full polynomials of order $p$ on triangles or tensor product polynomials of order $p$ on quadrilaterals.

\subsection{Benchmarks}
\label{sect:benchmarks}
We begin our numerical examples by studying the performance of CutFEM in the three standard problems of linear elasticity described in Section~\ref{sec:linear-elasticity}.

\begin{itemize}
\item
(\emph{The Displacement Problem})\,
In the stationary load problem we benchmark CutFEM using a manufactured problem on the unit square. The geometry and the solution is given by
\begin{subequations}
\label{eq:manufactured}
\begin{align}
&\Omega = [0,1]^2
 , \
\partial\Omega_D = \{x\in[0,1],y=0\} , \
\partial\Omega_N = \partial\Omega \backslash \partial\Omega_D
\\
&\bfu(x,y) = [ -\cos(\pi x)\sin(\pi y), \, \sin(\pi x/7)\sin(\pi y/3) ]/10
\end{align}
\end{subequations}
and from this we deduce expressions for the input data $f$, $g_N$ and $g_D$.
Visually inspecting numerical solutions using $p=\{1,2\}$ elements and different rotations of the background grid we in Figure~\ref{fig:load-sol} note that $p=1$ elements are sensitive to the grid rotation with regards to the quality of the stresses while $p=2$ elements give results which are visually invariant to the grid rotation.
In Figure~\ref{fig:load-conv} we present convergence results in $L^2(\Omega)$ norm and we see that the performance of CutFEM using $p=\{1,2,3,4\}$ elements in this problem is equivalent to that of conforming FEM using the same elements.

\begin{figure}
\centering
\begin{minipage}[b]{9.5cm}
\includegraphics[width=0.49\linewidth]{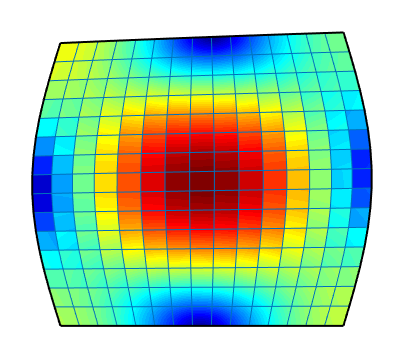}
\includegraphics[width=0.49\linewidth]{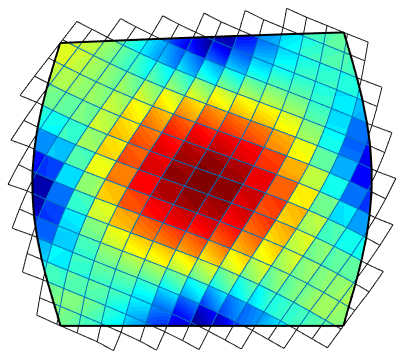}
\\
\includegraphics[width=0.49\linewidth]{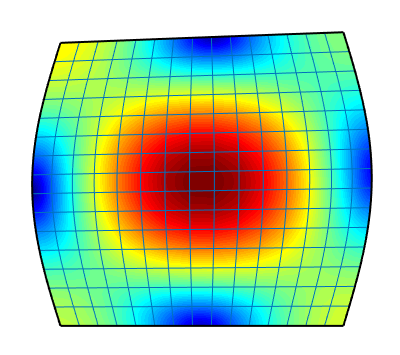}
\includegraphics[width=0.49\linewidth]{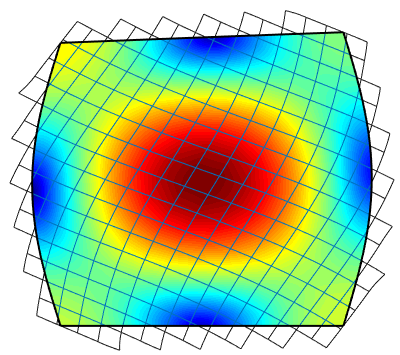}
\end{minipage}
\caption{CutFEM solutions to the static load model problem using a mesh size $h=0.1$ visualized with displacements and von-Mises stresses. In the two top subfigures $p=1$ elements are used while in the bottom two subfigures $p=2$ elements are used.}
\label{fig:load-sol}
\end{figure}

\begin{figure}
\centering
\includegraphics[width=0.49\linewidth]{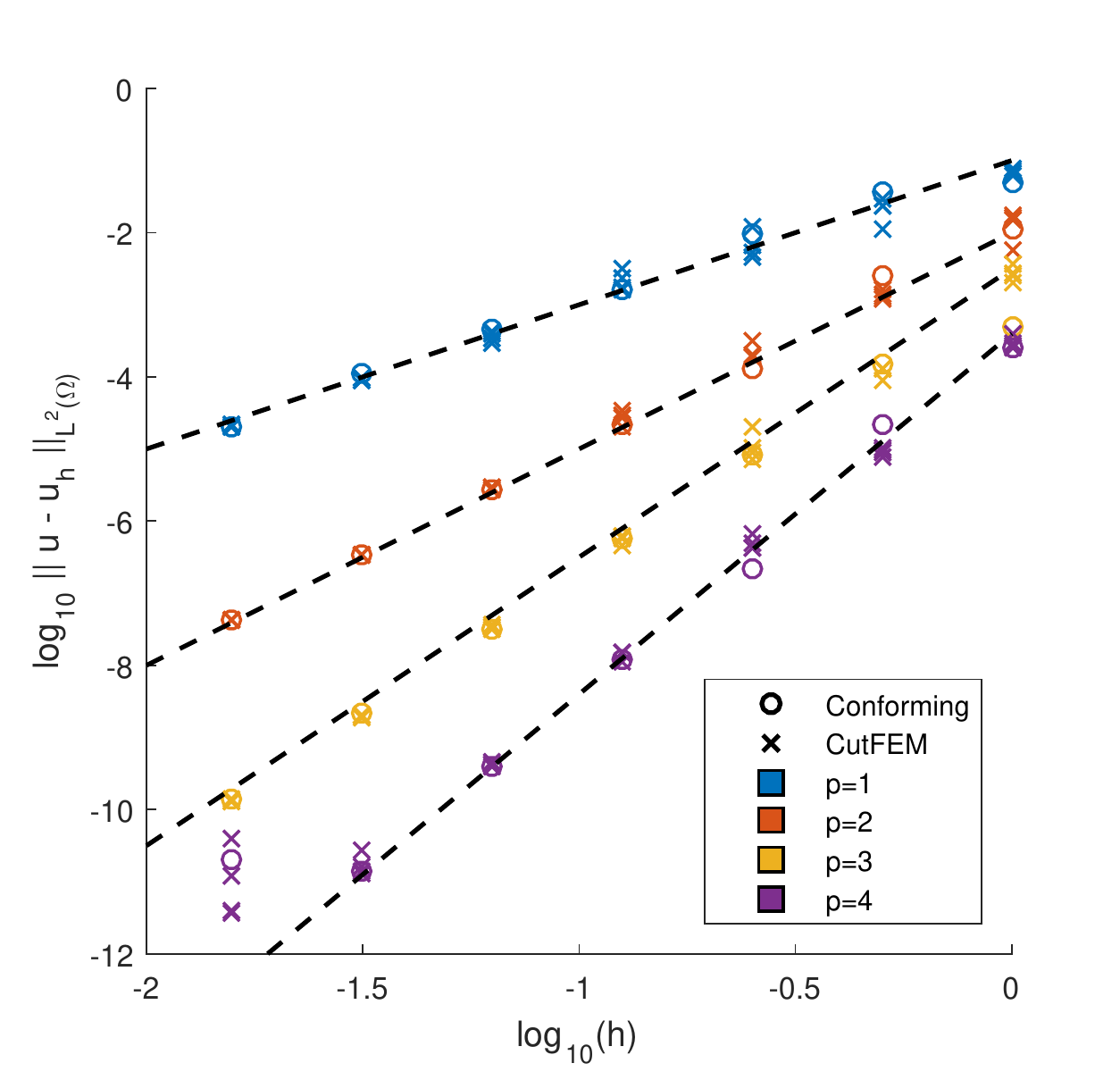}
\includegraphics[width=0.49\linewidth]{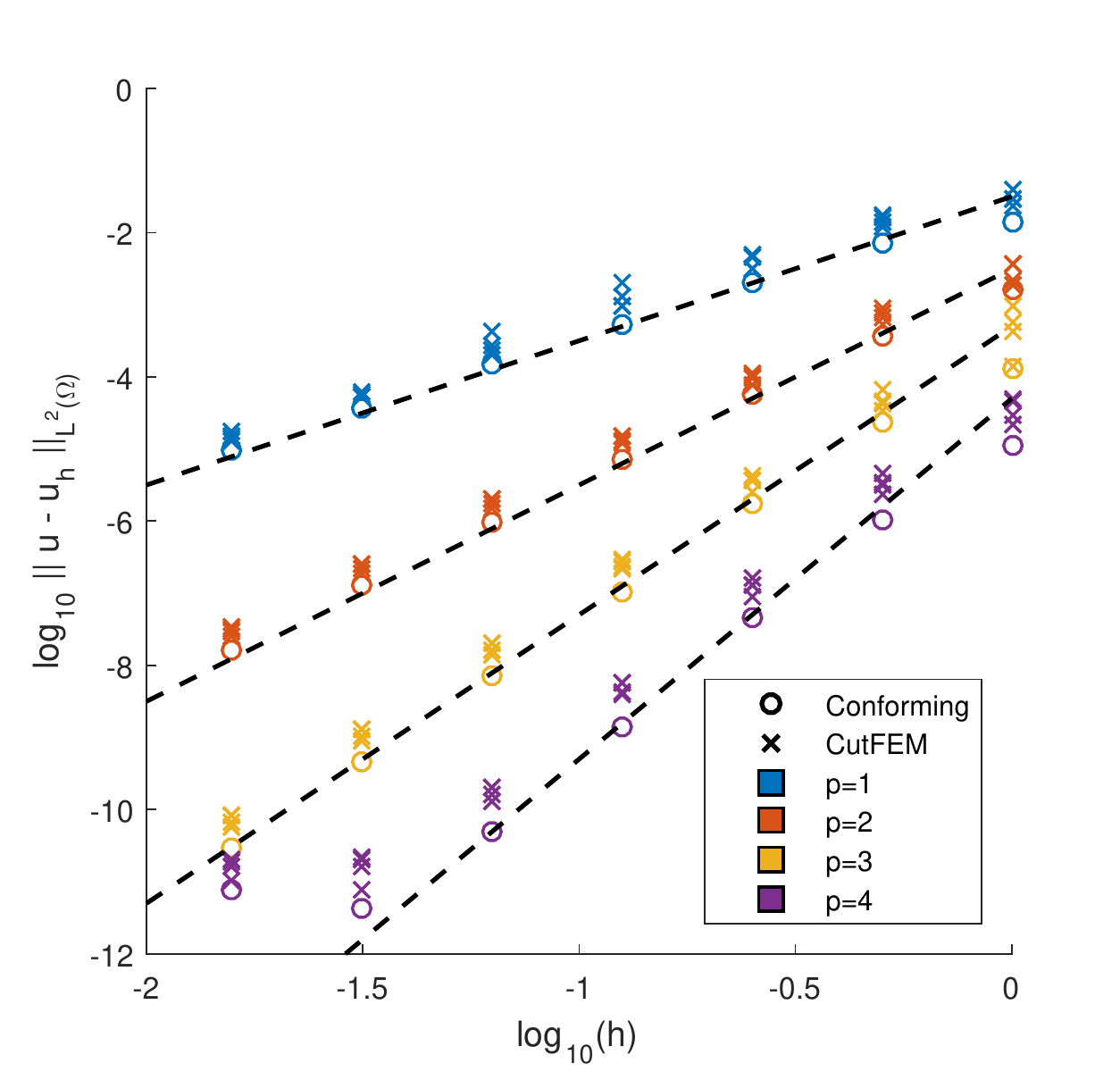}
\caption{Convergence in $L^2(\Omega)$ norm for the stationary load model problem using $p=1,2,3,4$ elements. In the left subfigure triangle elements are used and in the right subfigure quadrilateral elements are used. The dashed reference lines indicate theoretical convergence rates proportional to $h^{p+1}$.}
\label{fig:load-conv}
\end{figure}

\item
(\emph{The Frequency Response Problem})\,
To evaluate the performance of CutFEM in the frequency response problem we consider the cantilever beam with holes used to illustrate the cut meshes in Figures~\ref{fig:model-mesh}--\ref{fig:faces-stab}. This steel beam is fixated along left side and is under the influence of an oscillating gravitational load with an oscillatory frequency $\omega$. As reference we use $p=4$ elements on a conforming triangle mesh and we compare this to CutFEM calculations using $p=2$ elements on two structured meshes, see Figure~\ref{eq:freq-rep-meshes}. We are interested in accurately estimating the energy $E(\omega)$ defined in \eqref{eq:energyE} and in Figure~\ref{eq:freq-rep} we present the results for CutFEM compared to the higher order conforming method. It seems the coarse grid CutFEM estimation fails to capture some of the higher frequency details while the higher resolution CutFEM estimation succeeds. This is quite natural as the oscillatory modes corresponding to higher frequency loads most likely have more local details.

\begin{figure}
\centering
\includegraphics[width=0.9\linewidth]{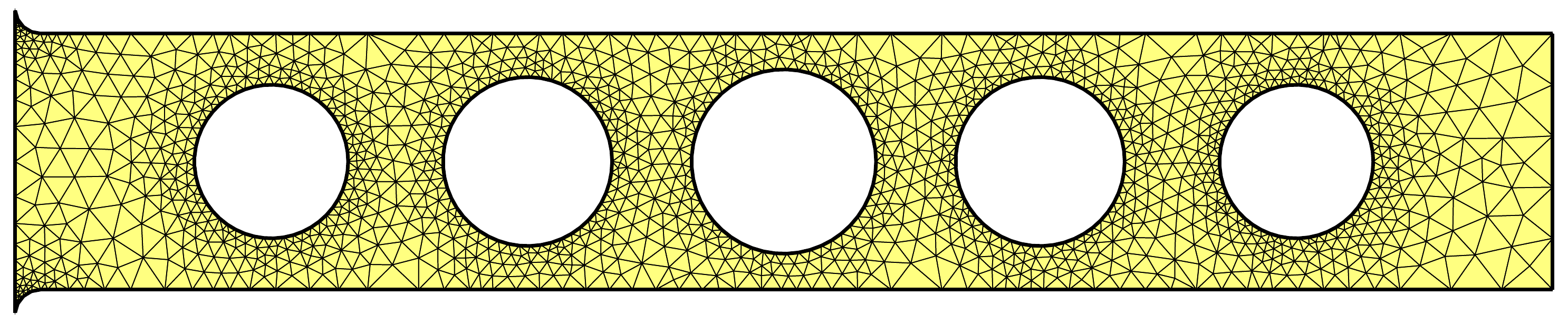}
\includegraphics[width=0.9\linewidth]{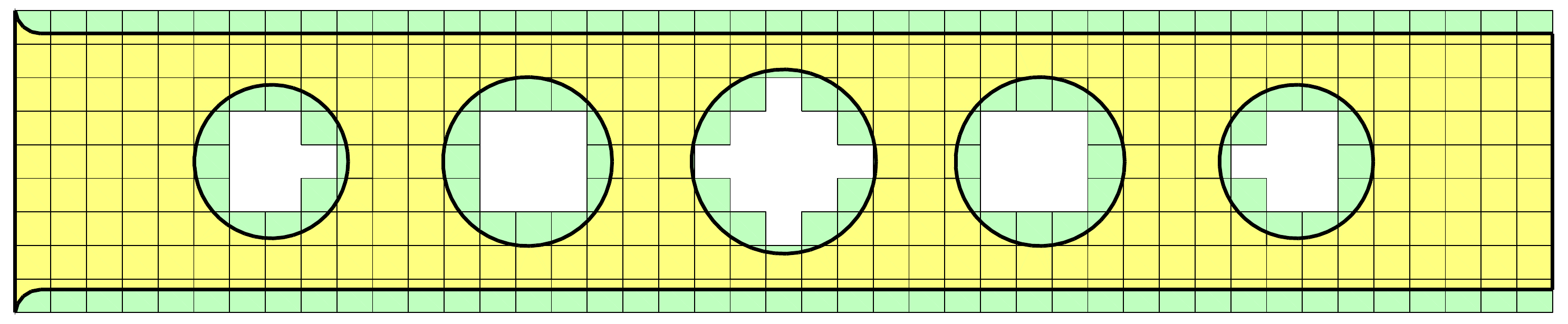}
\includegraphics[width=0.9\linewidth]{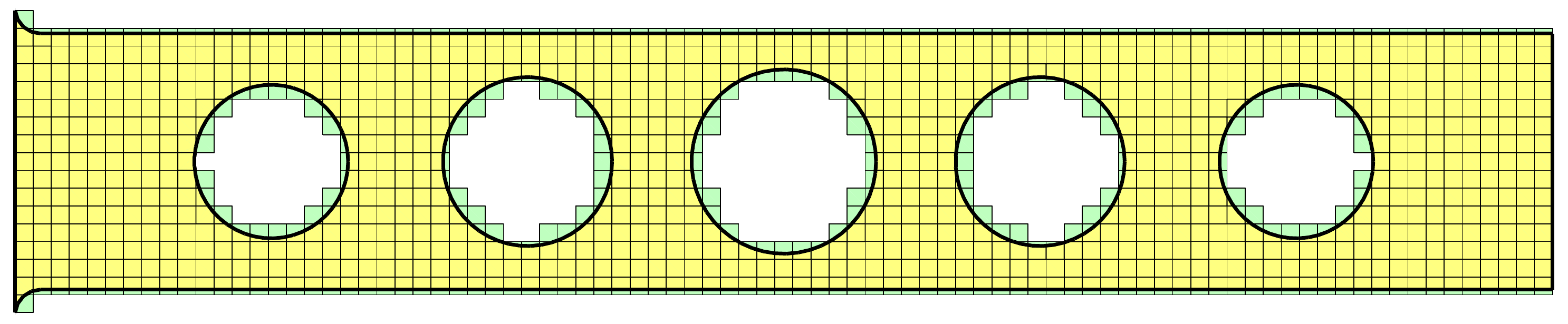}
\caption{Meshes used in the frequency response calculations.}
\label{eq:freq-rep-meshes}
\end{figure}

\begin{figure}
\centering
\includegraphics[width=0.49\linewidth]{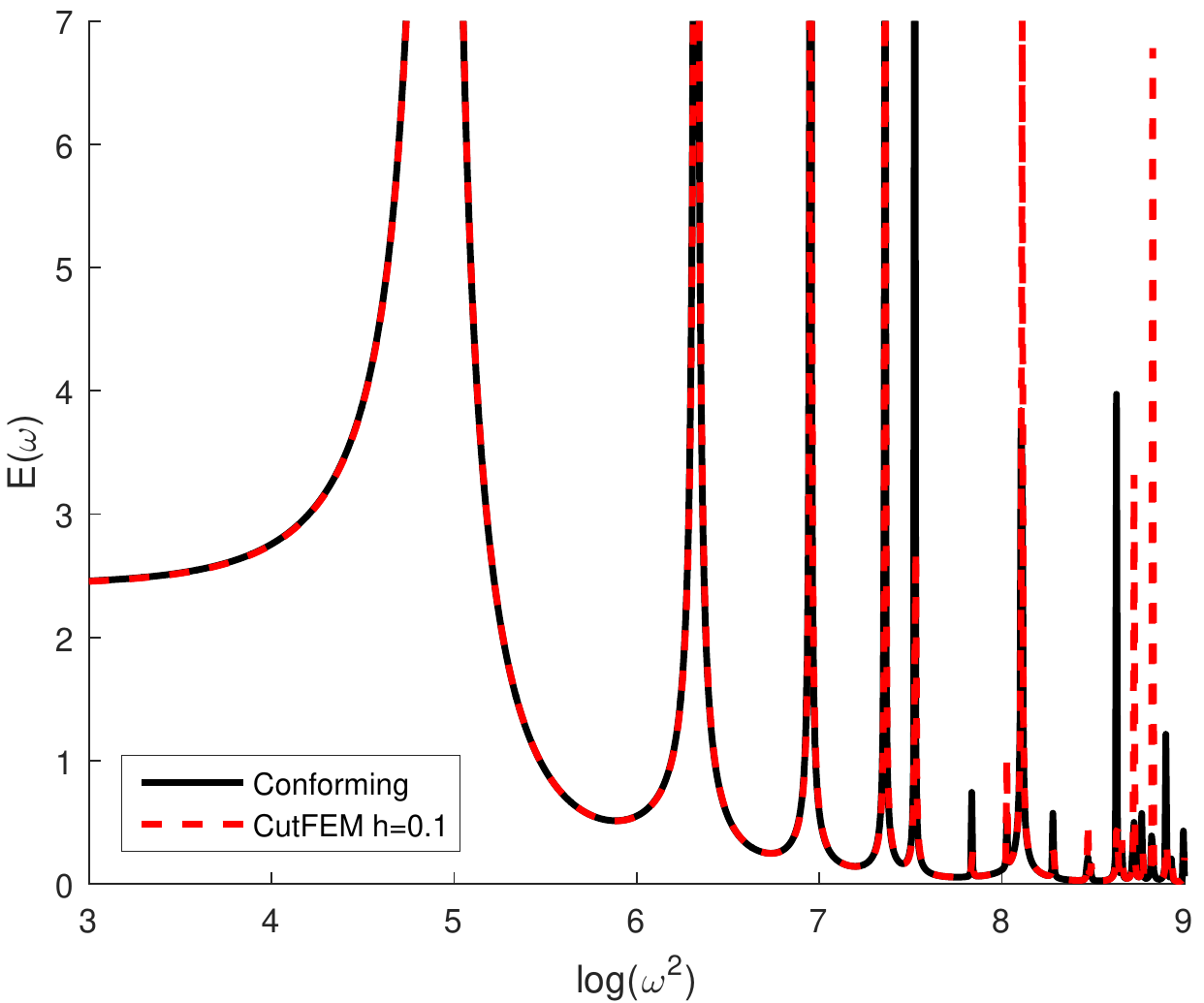}
\includegraphics[width=0.49\linewidth]{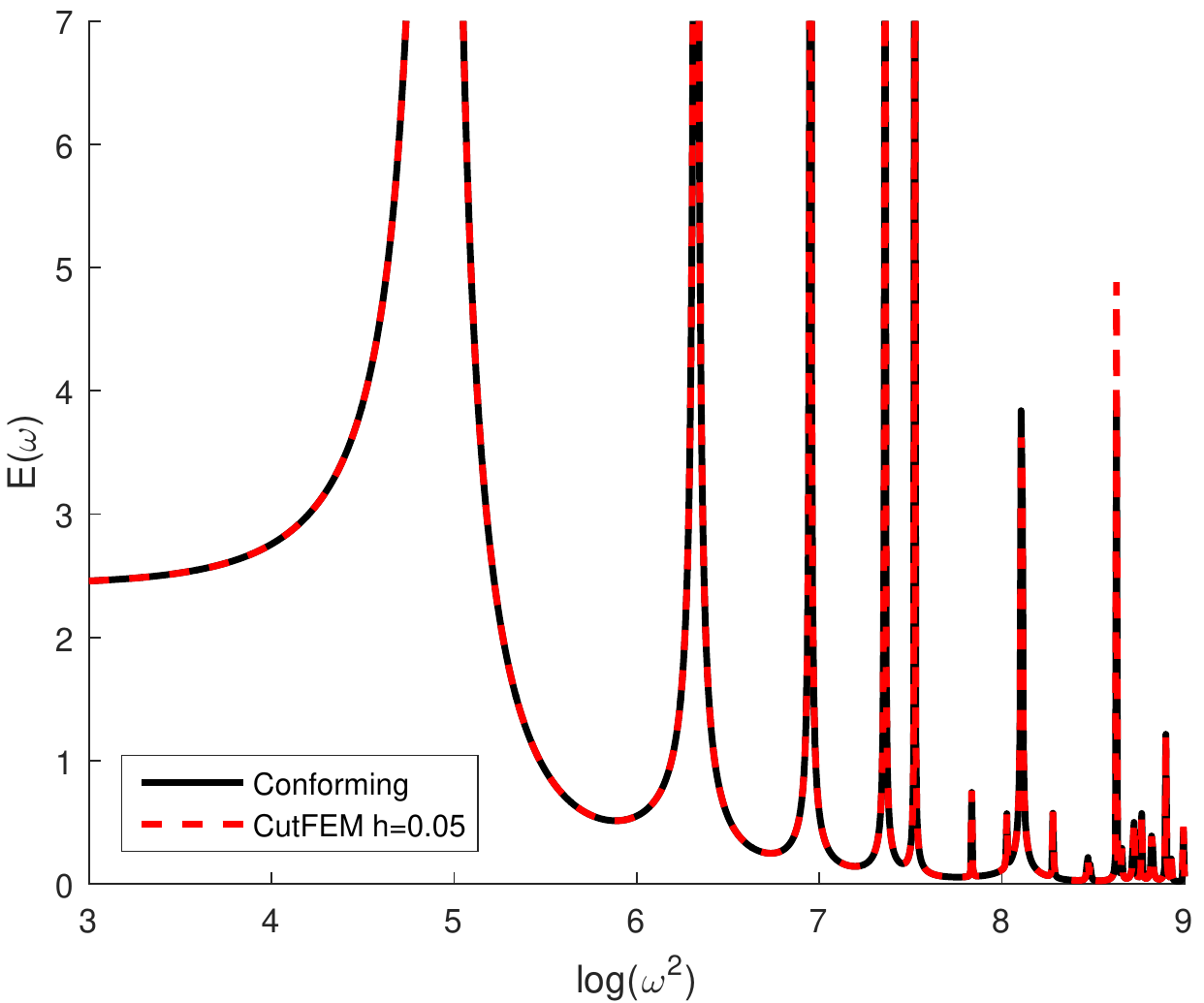}
\caption{Energy in frequency response calculations. $p=2$}
\label{eq:freq-rep}
\end{figure}

\item
(\emph{The Eigenvalue Problem})\,
We benchmark CutFEM for the eigenvalue problem on a free steel beam of length 3\,m and height 0.3\,m, i.e.
\begin{align}
\Omega=[0,3]\times[0,0.3]\, , \ \partial\Omega_N =\partial\Omega \, , \ \partial\Omega_D =\emptyset
\end{align}
In Figure~\ref{fig:evp-modes} we give a visual comparison of an eigenmode computed using $p=2$ elements with conforming FEM and CutFEM in two different cut situations and we note no noticeable differences.
To investigate the convergence in eigenvalues we as reference use
conforming FEM with $p=4$ elements on a fine grid.
The convergence results for an eigenvalue is presented in Figure~\ref{fig:evp-conv} and we see that CutFEM in this problem performs equivalently to conforming FEM.

\begin{figure}
\centering
\includegraphics[width=0.8\linewidth]{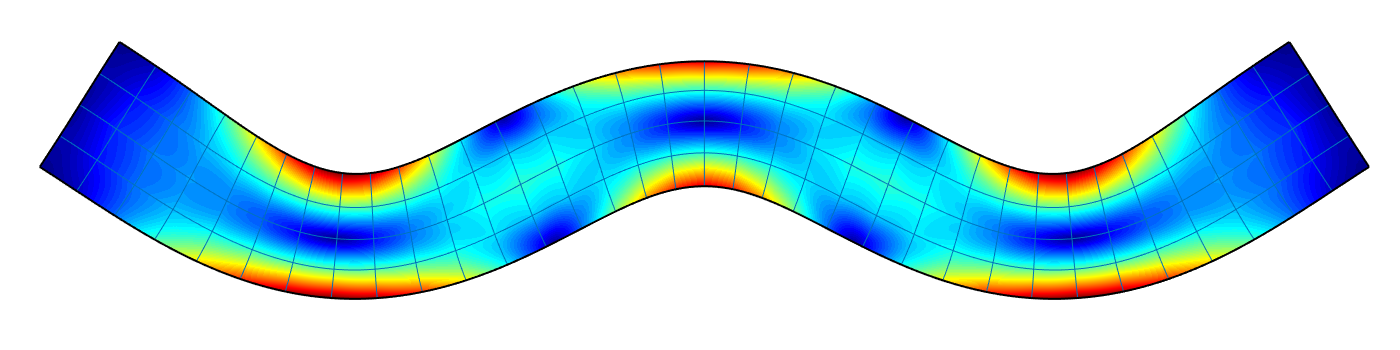}
\includegraphics[width=0.8\linewidth]{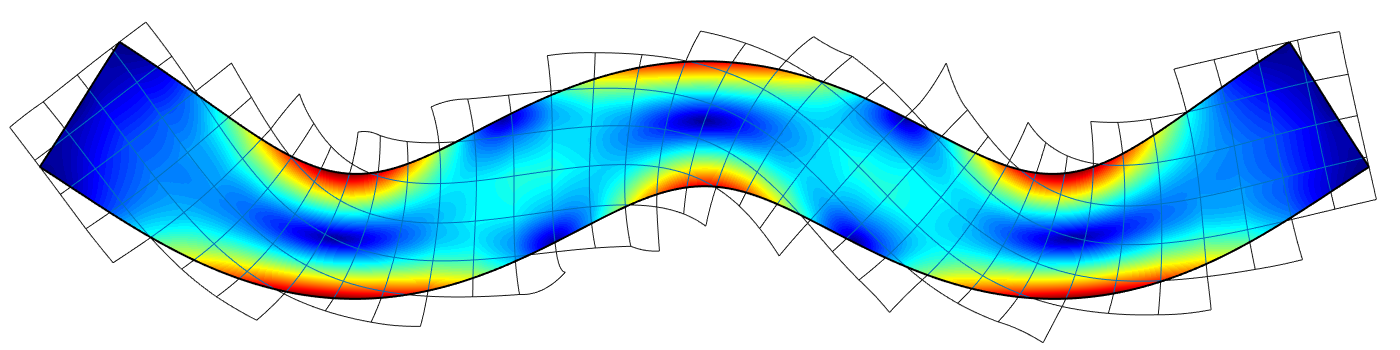}
\includegraphics[width=0.8\linewidth]{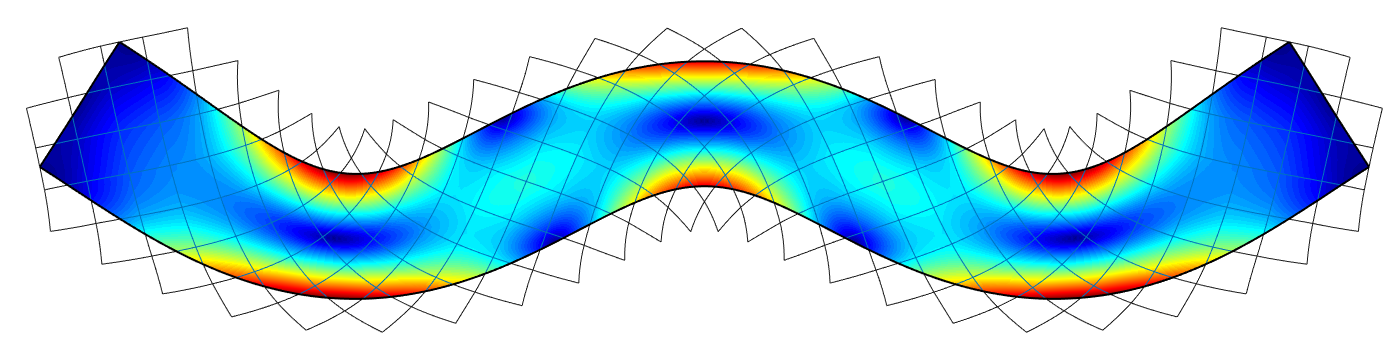}
\caption{Shape and von-Mises stress for 6:th eigenmode using $p=2$ elements. From top to bottom: Conforming FEM, CutFEM with the background mesh rotated $\theta=\pi/9$ respectively $\theta=\pi/4$. There are no visible differences in the shape and only minute differences in the stress.}
\label{fig:evp-modes}
\end{figure}

\begin{figure}
\centering
\includegraphics[width=0.49\linewidth]{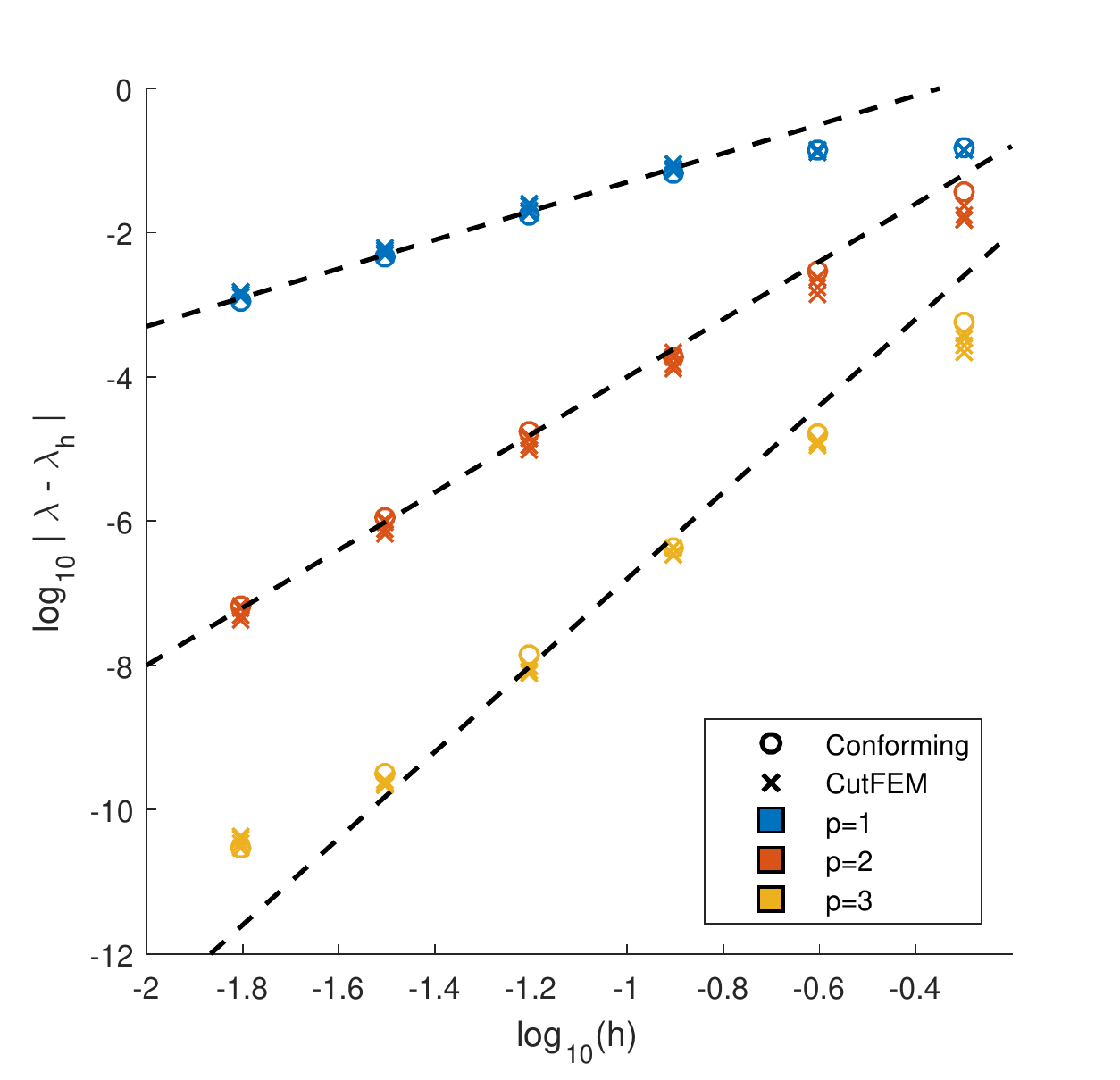}
\caption{Convergence of 6:th eigenvalue in the free eigenvalue problem. Computations are performed on quadrilaterals and a reference value $\lambda_{\text{ref}} = 2.7063377630 \cdot 10^{7}$ is computed using $p=4$ elements on the finest matching grid. Dashed reference lines are proportional to $h^{2p}$. }
\label{fig:evp-conv}
\end{figure}

\end{itemize}

\subsection{Condition Numbers}

As the domain is allowed to cut the elements in the background grid in an arbitrary fashion we may end up with cut situations in which the method becomes ill-conditioned unless properly stabilized or preconditioned.
While we focus our work on the stabilization side an alternative, or complement, to stabilization is preconditioning techniques which can yield good results, see for example \cite{PrVeZwBr17}.
In the numerical results below we include preconditioning in the form of simple diagonal scaling, demonstrated for the stationary load problem as follows.
The stationary load problem in matrix form reads
\begin{align}
\mathbf{A}\mathbf{U} = \mathbf{L}
\end{align}
where $\mathbf{A}$ is the stiffness matrix, $\mathbf{U}$ is the vector of unknowns, and $\mathbf{L}$ is the load vector. By the substitution $\mathbf{U} = \mathbf{D}\mathbf{Y}$ where $\mathbf{D}$ is the diagonal matrix given by $\mathbf{D}_{ii} = \mathbf{A}_{ii}^{-1/2}$ we can instead consider the equivalent matrix problem
\begin{align}
(\mathbf{D}^T\mathbf{A}\mathbf{D})\mathbf{Y} = \mathbf{D}^T\mathbf{L}
\end{align}
where the (proconditioned) matrix $\mathbf{D}^T\mathbf{A}\mathbf{D}$ have diagonal elements which are all one and in which the symmetry of $\mathbf{A}$ is preserved.

We investigate the effect of preconditioning and stabilization using the model static load problem from Section~\ref{sect:benchmarks}. First we construct two extreme cut situations as illustrated in Figure~\ref{fig:cond-meshes}; a perfectly fitted (conforming) mesh and a mesh where all boundary elements have a very small intersection with the domain.
Comparing the condition number results for a perfectly fitted mesh in Table~\ref{tab:cond-perfect-cut} to the `worst case' mesh results in Table~\ref{tab:cond-bad-cut} we note that a combination of stabilization and preconditioning by simple diagonal scaling gives the best results. However, we also see that even when applying both stabilization and preconditioning condition numbers when using $p\geq 3$ elements are orders of magnitude greater in the worst case scenario.
We see the same behavior when investigating how the condition numbers scale with the mesh size $h$. By rotating the background grid $\theta=\{0,\frac{\pi}{9},\frac{\pi}{7},\frac{\pi}{5}\}$ radians we construct both conforming meshes and various cut situations for which we compare condition numbers from preconditioning by diagonal scaling alone to condition numbers when we also add stabilization, i.e. CutFEM. In Figure~\ref{fig:cond-conv-p1p2} we for $p=\{1,2\}$ elements see good results, both regarding scaling and size of condition numbers. Note that for $p=1$ elements preconditioning by diagonal scaling alone is sufficient. For $p=\{3,4,5\}$ elements we in  Figure~\ref{fig:cond-conv-p3p4p5} see that the condition numbers scale in the correct manner but their size in cut situations are orders of magnitude higher compared to those of conforming FEM. Reviewing the analytical results above we attribute this effect to the constant in the inverse estimates \eqref{eq:inverse-elements}, \eqref{eq:inverse-elements-grad} and \eqref{eq:inverse-elements-eps} which can become quite large for higher order polynomials.

\begin{figure}
\centering
\includegraphics[width=0.35\linewidth]{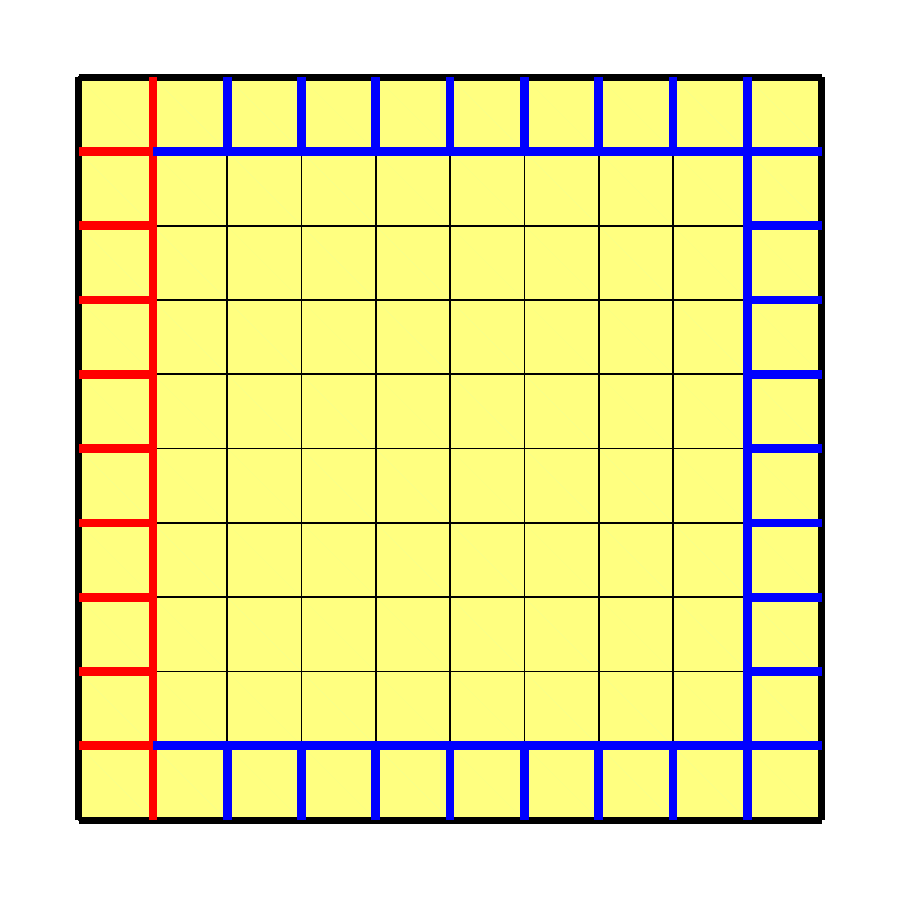}
\includegraphics[width=0.35\linewidth]{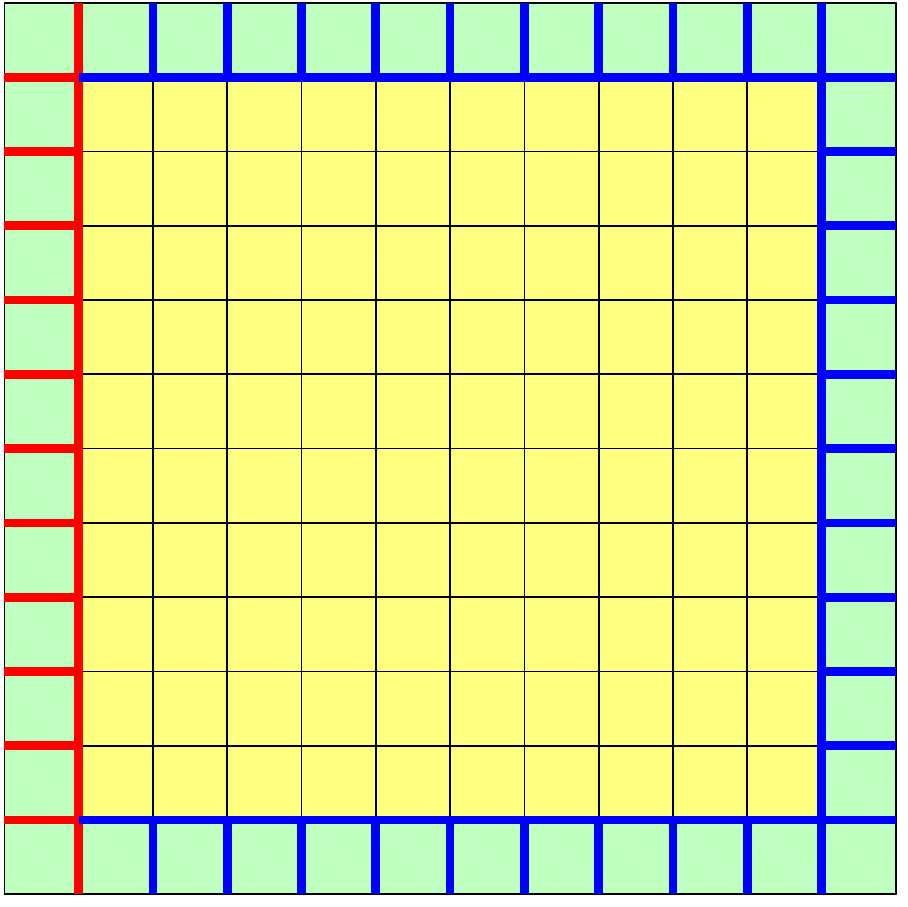}
\caption{Meshes used in estimation of condition numbers. Perfectly fitted mesh (left) and a `worst case' mesh (right) where only 1/1000 of the boundary elements are inside the domain.}
\label{fig:cond-meshes}
\end{figure}

\begin{table}
\caption{Numerical condition number estimates for stiffness matrix $\mathbf{A}$ and mass matrix $\mathbf{M}$ in the perfectly fitted situation in Figure~\ref{fig:cond-meshes}. The preconditioning here is a simple diagonal scaling of the matrix.}
\label{tab:cond-perfect-cut}	
\begin{tabular}{lllllllll}
\toprule
$\mathbf{A}\quad$ & $p\quad$ &  Plain & Precond. & \quad & $\mathbf{M}\quad$ &  $ p\quad$ &  Plain & Precond. \\
\midrule
& 1 & $5.2187\cdot 10^{5}\quad$  &  $3.4096\cdot 10^{3}\quad$ & &
& 1 & $2.2392\cdot 10^{1}\quad$  &  $1.1646\cdot 10^{1}$ \\
& 2 & $5.0805\cdot 10^{6}$  &  $2.7717\cdot 10^{4}$ & &
& 2 & $6.1285\cdot 10^{1}$  &  $1.3824\cdot 10^{1}$ \\
& 3 & $1.9148\cdot 10^{7}$  &  $1.1609\cdot 10^{5}$ & &
& 3 & $1.2169\cdot 10^{2}$  &  $2.0935\cdot 10^{1}$ \\
& 4 & $6.0930\cdot 10^{7}$  &  $4.6446\cdot 10^{5}$ & &
& 4 & $3.1309\cdot 10^{2}$  &  $3.4586\cdot 10^{1}$ \\
& 5 & $1.7828\cdot 10^{8}$  &  $1.7832\cdot 10^{6}$ & &
& 5 & $9.6345\cdot 10^{2}$  &  $6.2514\cdot 10^{1}$ \\
\bottomrule
\end{tabular}
\end{table}

\begin{table}
\caption{Numerical condition number estimates for the stiffness matrix $\mathbf{A}$ and the mass matrix $\mathbf{M}$ for the `worst case' cut situation in Figure~\ref{fig:cond-meshes} where only 1/1000 of boundary elements are inside the domain. The preconditioning here is a simple diagonal scaling of the matrix.}
\label{tab:cond-bad-cut}	
\begin{tabular}{llllll}
\toprule
$\mathbf{A}\quad$ & $p\quad$ &  Plain & Precond. & Stabilized & Stab. + Precond. \\
\midrule
& 1 &   $1.6675\cdot 10^{16}\quad$  &  $2.8270\cdot 10^{3}\quad$  &  $4.6725\cdot 10^{6}\quad$  &  $2.9021\cdot 10^{3}\quad$ \\
& 2 &   $6.0692\cdot 10^{30}$  &  $1.4264\cdot 10^{16}$  &  $1.0239\cdot 10^{9}$  &  $2.3952\cdot 10^{4}$ \\
& 3 &   $1.6325\cdot 10^{32}$  &  $4.0341\cdot 10^{19}$  &  $1.3580\cdot 10^{11}$  &  $4.7715\cdot 10^{5}$ \\
& 4 &   $3.0219\cdot 10^{32}$  &  $1.0862\cdot 10^{20}$  &  $2.2118\cdot 10^{13}$  &  $1.1224\cdot 10^{8}$ \\
& 5 &   $4.1334\cdot 10^{33}$  &  $2.0851\cdot 10^{21}$  &  $4.4914\cdot 10^{15}$  &  $3.5444\cdot 10^{10}$ \\
\midrule
$\mathbf{M}\quad$ & $p\quad$ &  Plain & Precond. & Stabilized & Stab. + Precond. \\
\midrule
& 1 & $2.6963\cdot 10^{19}$  &  $1.1848\cdot 10^{1}$  &  $3.4553\cdot 10^{3}$  &  $1.5677\cdot 10^{1}$ \\
& 2 & $3.8825\cdot 10^{34}$  &  $5.1576\cdot 10^{16}$  &  $1.8307\cdot 10^{5}$  &  $1.3623\cdot 10^{3}$ \\
& 3 & $1.2091\cdot 10^{35}$  &  $5.7858\cdot 10^{18}$  &  $1.0203\cdot 10^{7}$  &  $2.6414\cdot 10^{5}$ \\
& 4 & $3.6643\cdot 10^{35}$  &  $6.0559\cdot 10^{19}$  &  $1.4240\cdot 10^{9}$  &  $6.7336\cdot 10^{7}$ \\
& 5 & $4.7194\cdot 10^{35}$  &  $5.1345\cdot 10^{19}$  &  $5.3499\cdot 10^{11}$  &  $2.4344\cdot 10^{10}$ \\
\bottomrule
\end{tabular}
\end{table}

\begin{figure}
\centering
\includegraphics[width=0.49\linewidth]{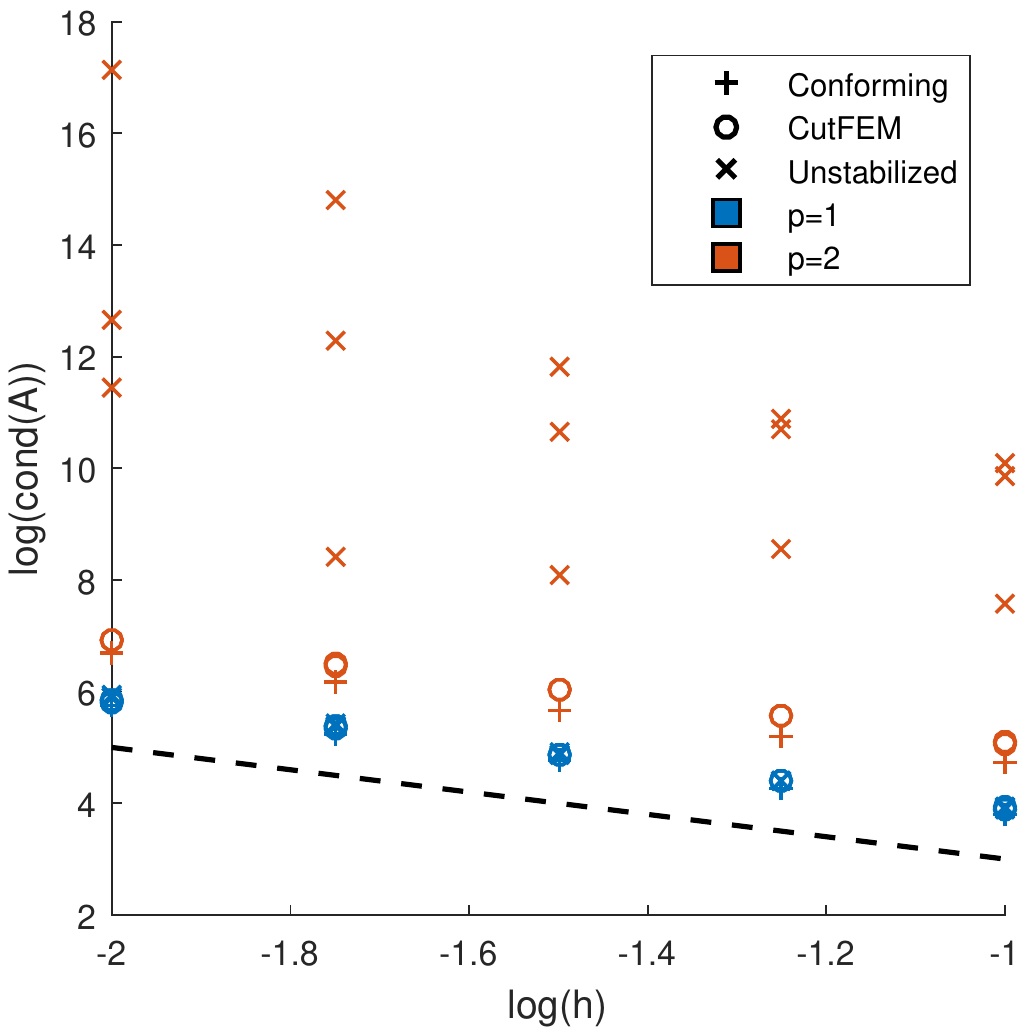}
\includegraphics[width=0.49\linewidth]{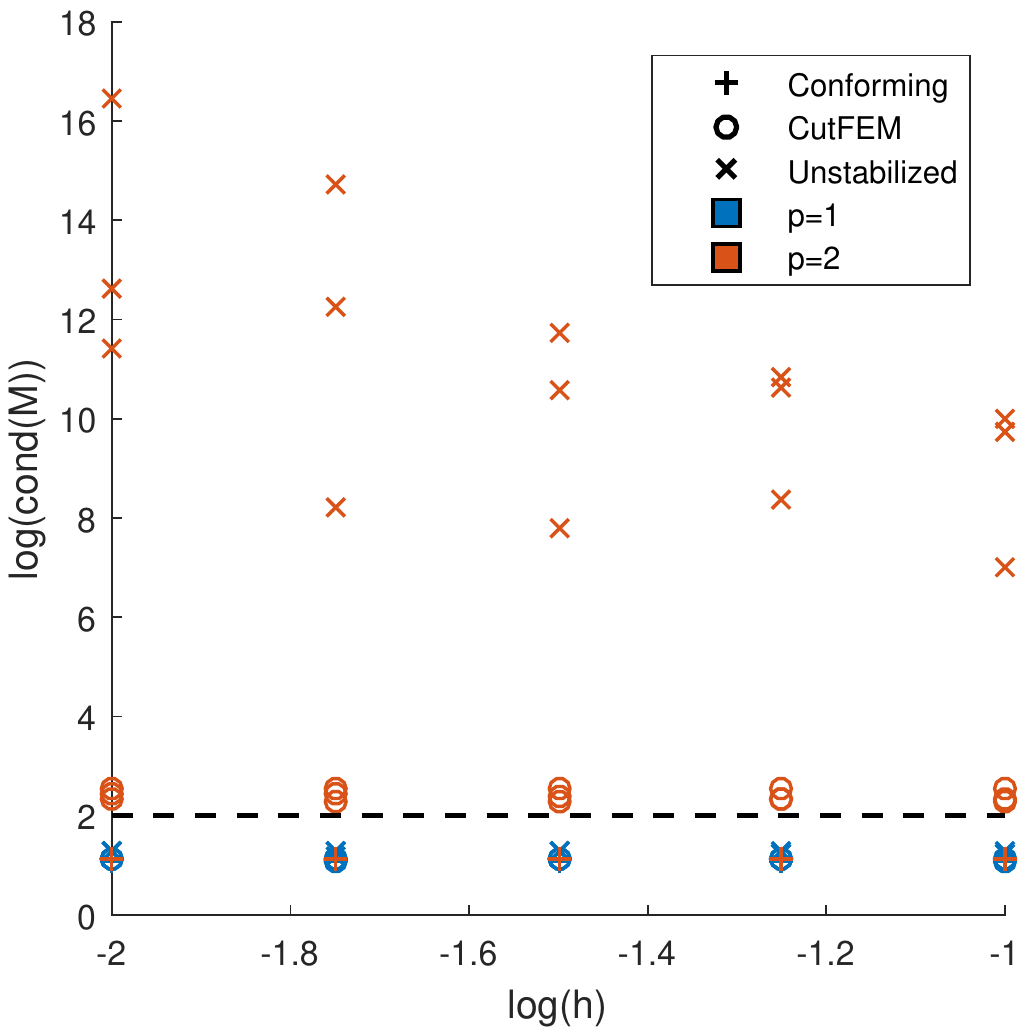}
\caption{Numerical estimation of condition numbers for the stiffness matrix (left) and the mass matrix (right) using $p=\{1,2\}$ elements. All results includes preconditioning by simple diagonal scaling.
The dashed reference lines indicate the theoretical condition number scaling of $h^{-2}$ for the stiffness matrix and constant scaling for the mass matrix.}
\label{fig:cond-conv-p1p2}
\end{figure}

\begin{figure}
\centering
\includegraphics[width=0.49\linewidth]{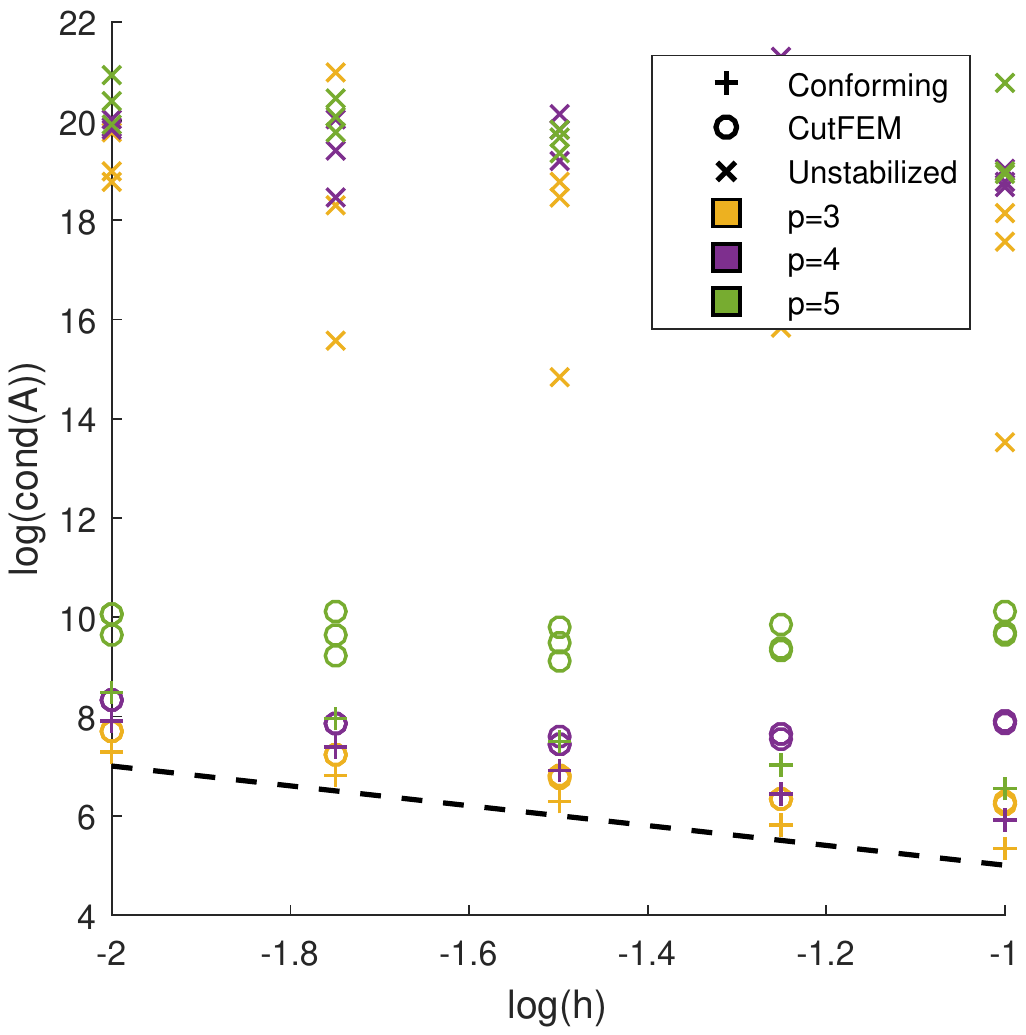}
\includegraphics[width=0.49\linewidth]{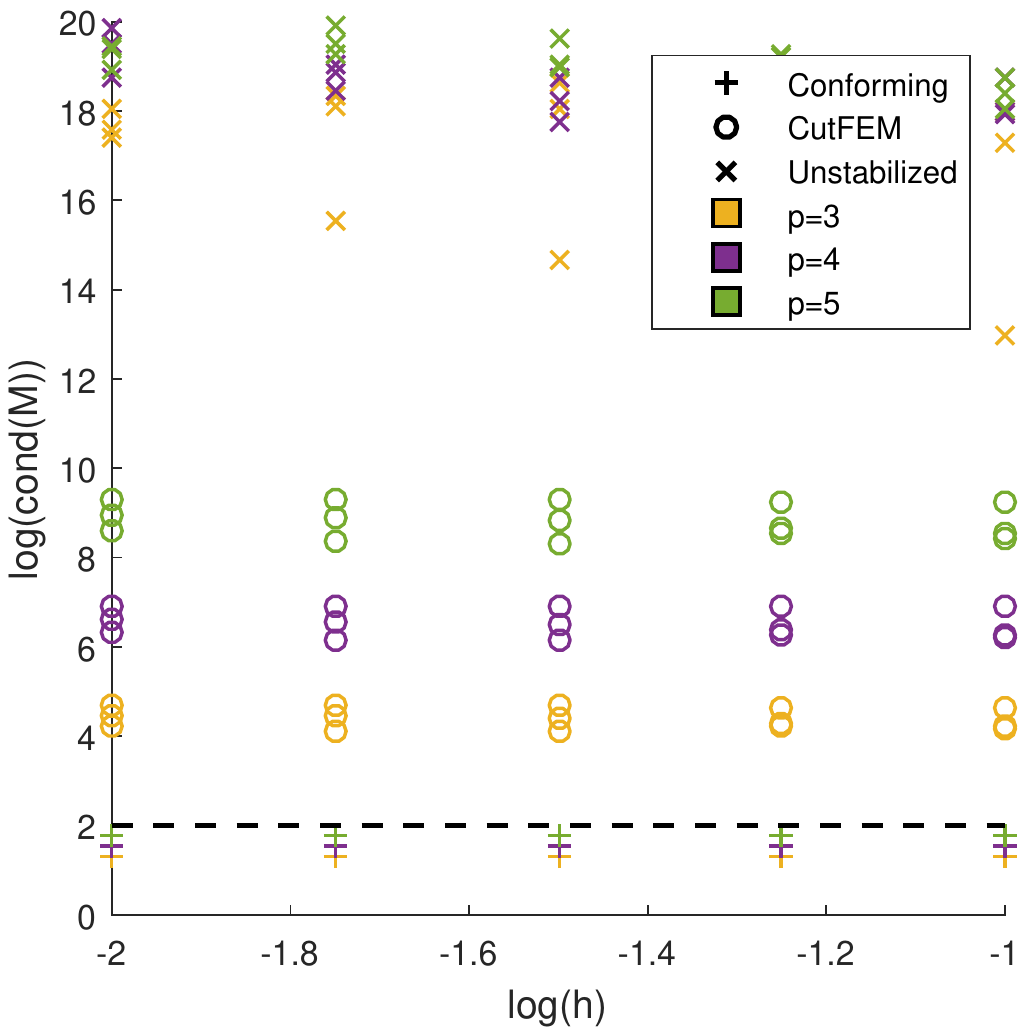}
\caption{Numerical estimation of condition numbers for the stiffness matrix (left) and the mass matrix (right) using $p=\{3,4,5\}$ elements. All results includes preconditioning by simple diagonal scaling.
The dashed reference lines indicate the theoretical condition number scaling of $h^{-2}$ for the stiffness matrix and constant scaling for the mass matrix.}
\label{fig:cond-conv-p3p4p5}
\end{figure}

\subsection{Thin Geometries}
\label{sec:thin-geom}

It is well known that low order elements ($p=1$) in elasticity problems suffer from locking on thin geometries when the number of elements in the thickness direction approaches one. This effect stems from the boundary condition $\bfsig(\bfu)\cdot\bfn = 0$ which in such situations constrains the non-zero components of the stress to the tangential plane. In CutFEM the situation may become even more extreme as the number of elements in the thickness direction can be a small fraction of an element. This effect is illustrated in Figure~\ref{fig:thin-beam} where a thin cantilever beam under gravitational load using large elements exhibits extreme locking when using $p=1$ elements while there is no such effects when using higher order elements.

Furthermore, in CutFEM it is possible for the geometry to be curved inside an element, yielding a tangential plane with non-zero curvature. To avoid locking in such situations when using coarse meshes we must use even higher order elements or increase resolution until the curvature is small compared to the element size. As an illustration of this we here consider a free ring under a centrifugal load. In Figure~\ref{fig:thin-ring} we note that while $p=2$ elements yield fairly good results for this mesh resolution we need $p=3$ elements for visually perfect results with regards to rotational symmetry for the stress distribution.

\begin{figure}
\centering
\includegraphics[width=0.6\linewidth]{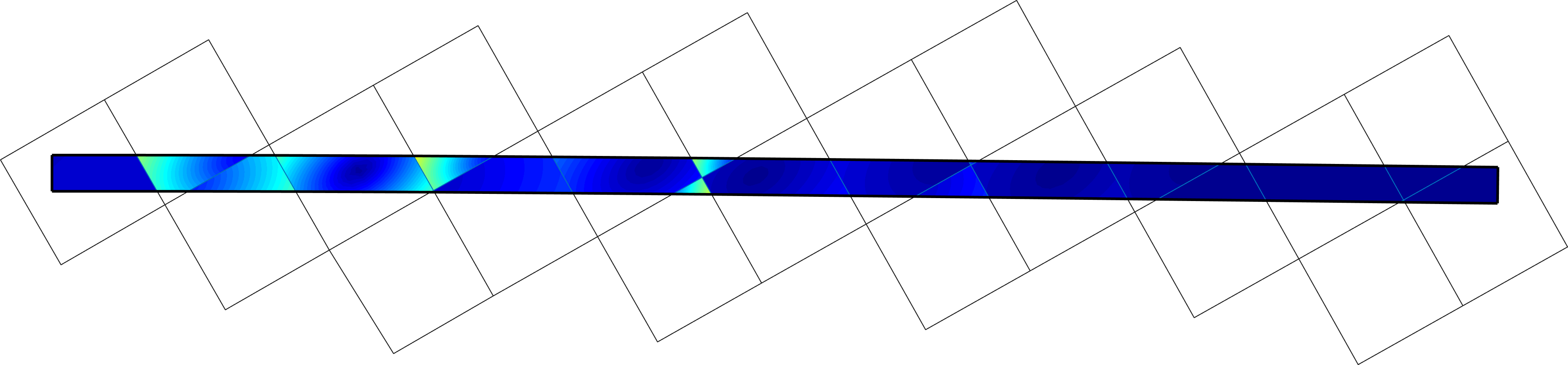} \quad
\includegraphics[width=0.30\linewidth]{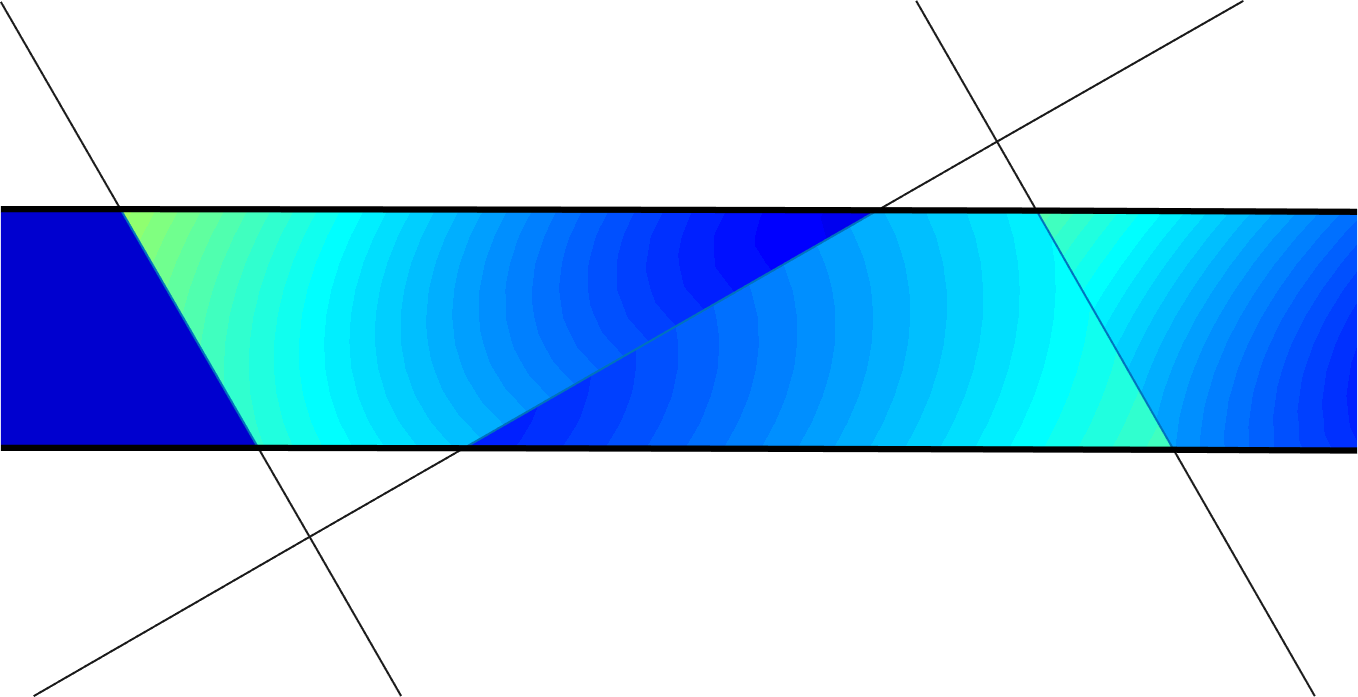}

\includegraphics[width=0.6\linewidth]{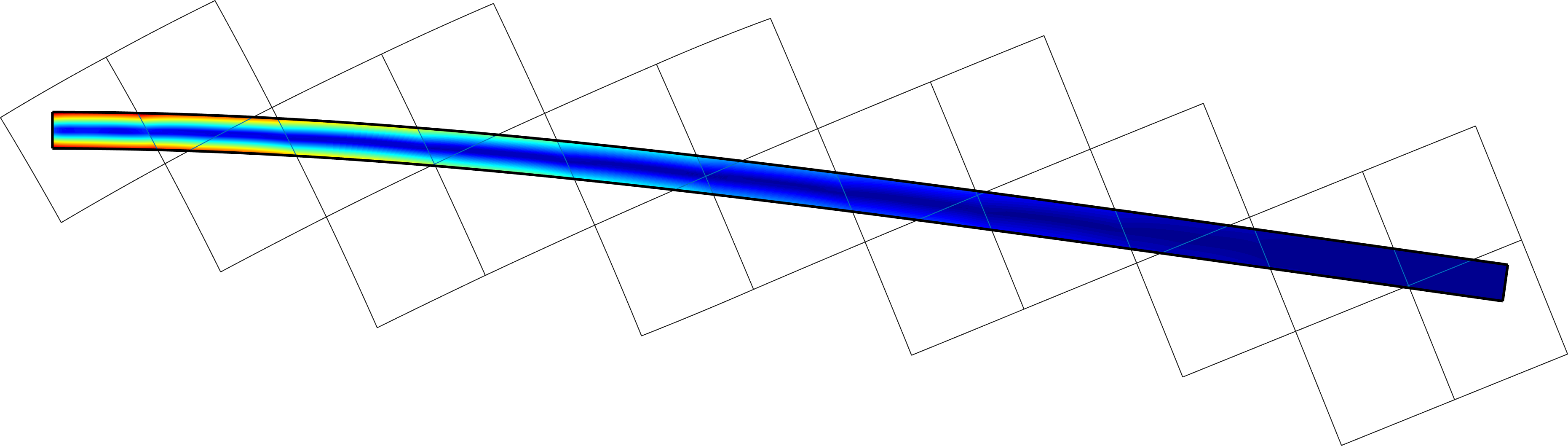} \quad
\includegraphics[width=0.30\linewidth]{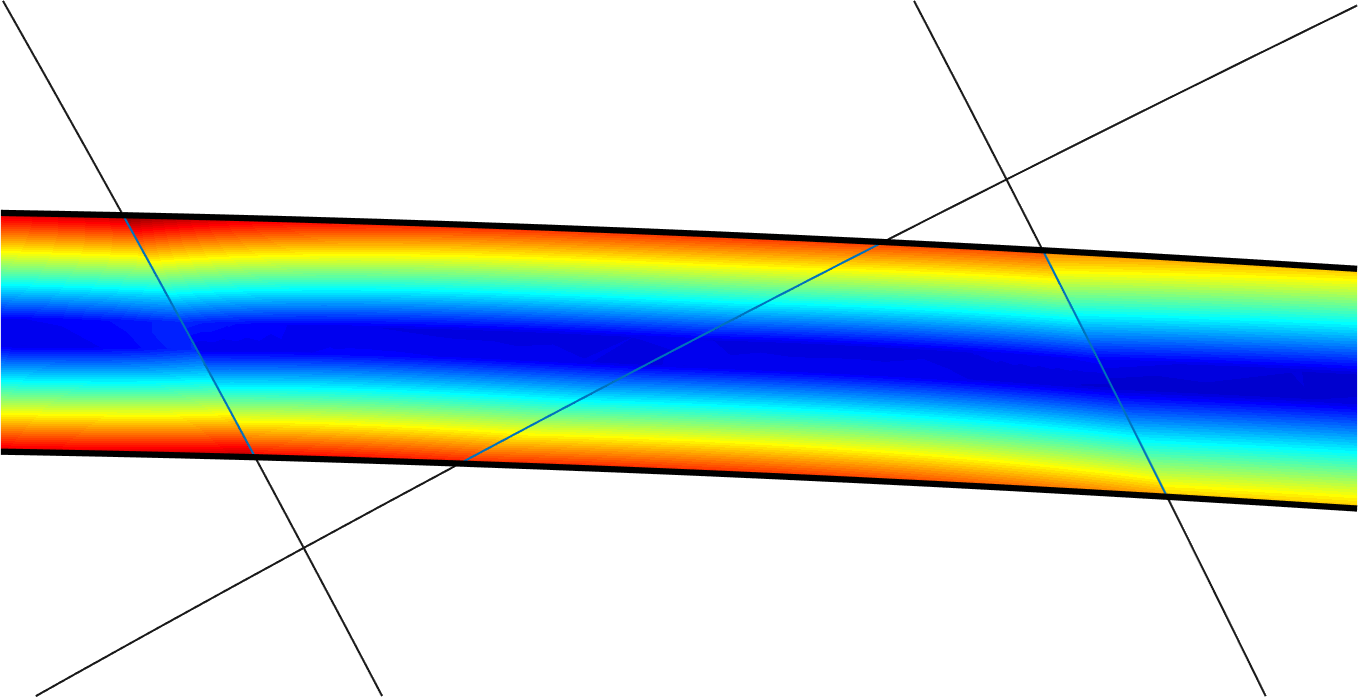}

\includegraphics[width=0.6\linewidth]{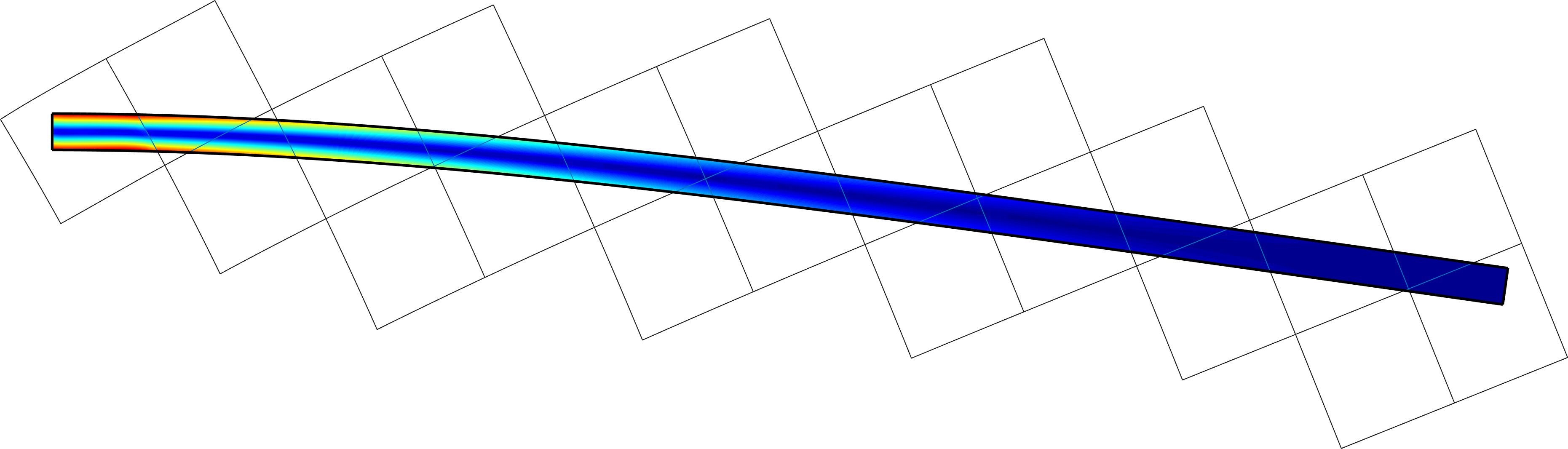} \quad
\includegraphics[width=0.30\linewidth]{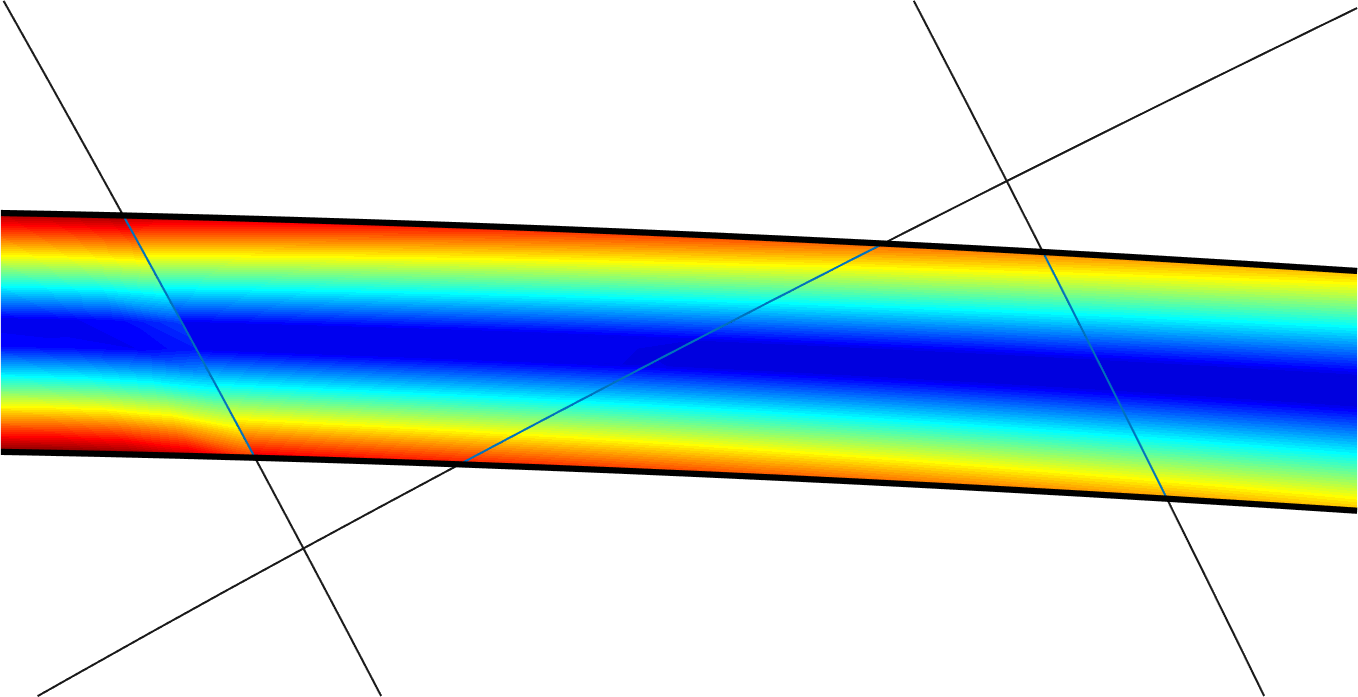}
\caption{Thin cantilever beam under gravitational loading. Von Mises stresses in a thin cantilever beam under gravitational load using $p=1,2,3$ elements. Note the extreme locking occuring when using $p=1$.}
\label{fig:thin-beam}
\end{figure}

\begin{figure}
\centering
\includegraphics[width=0.31\linewidth]{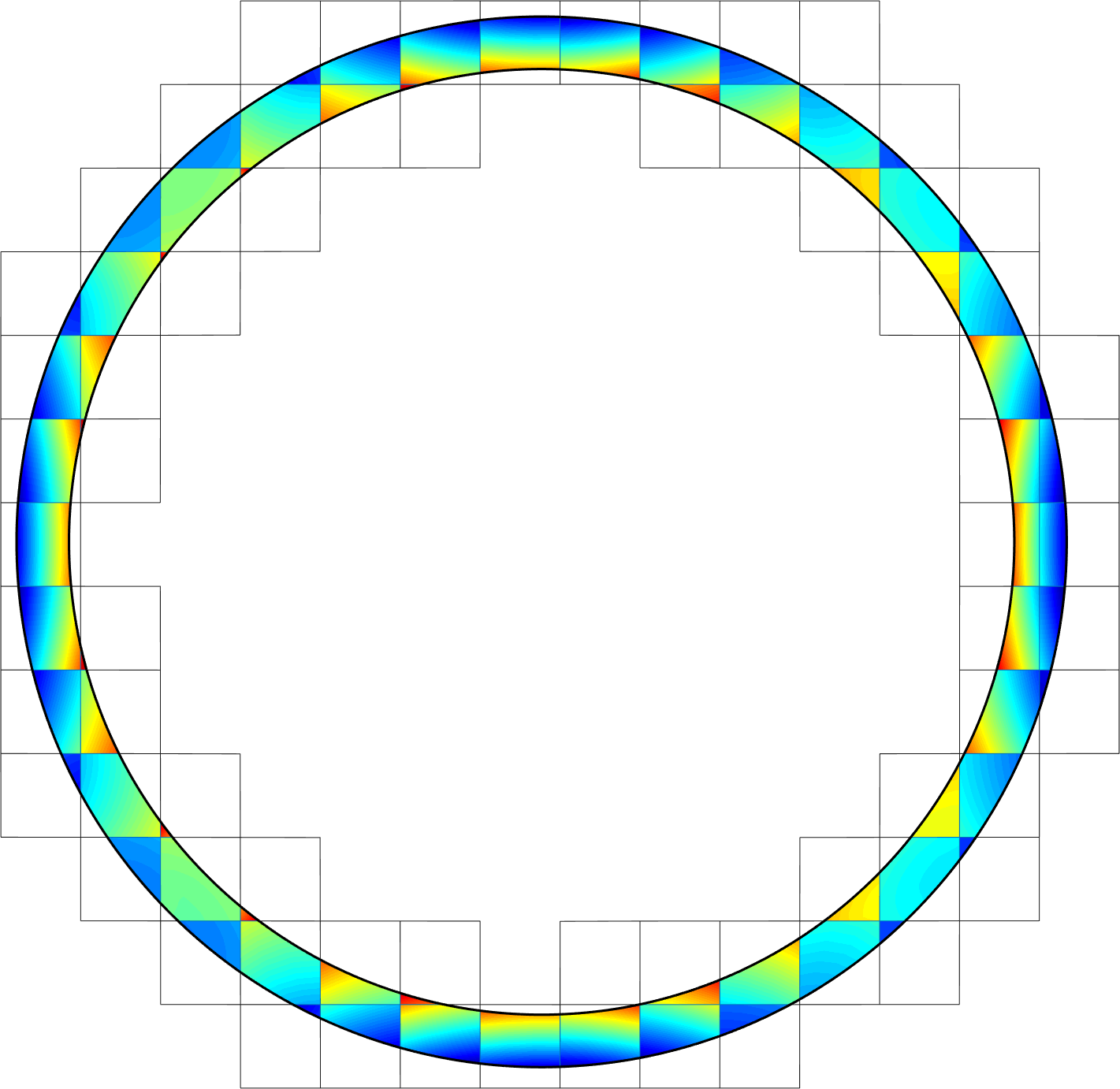} \quad
\includegraphics[width=0.31\linewidth]{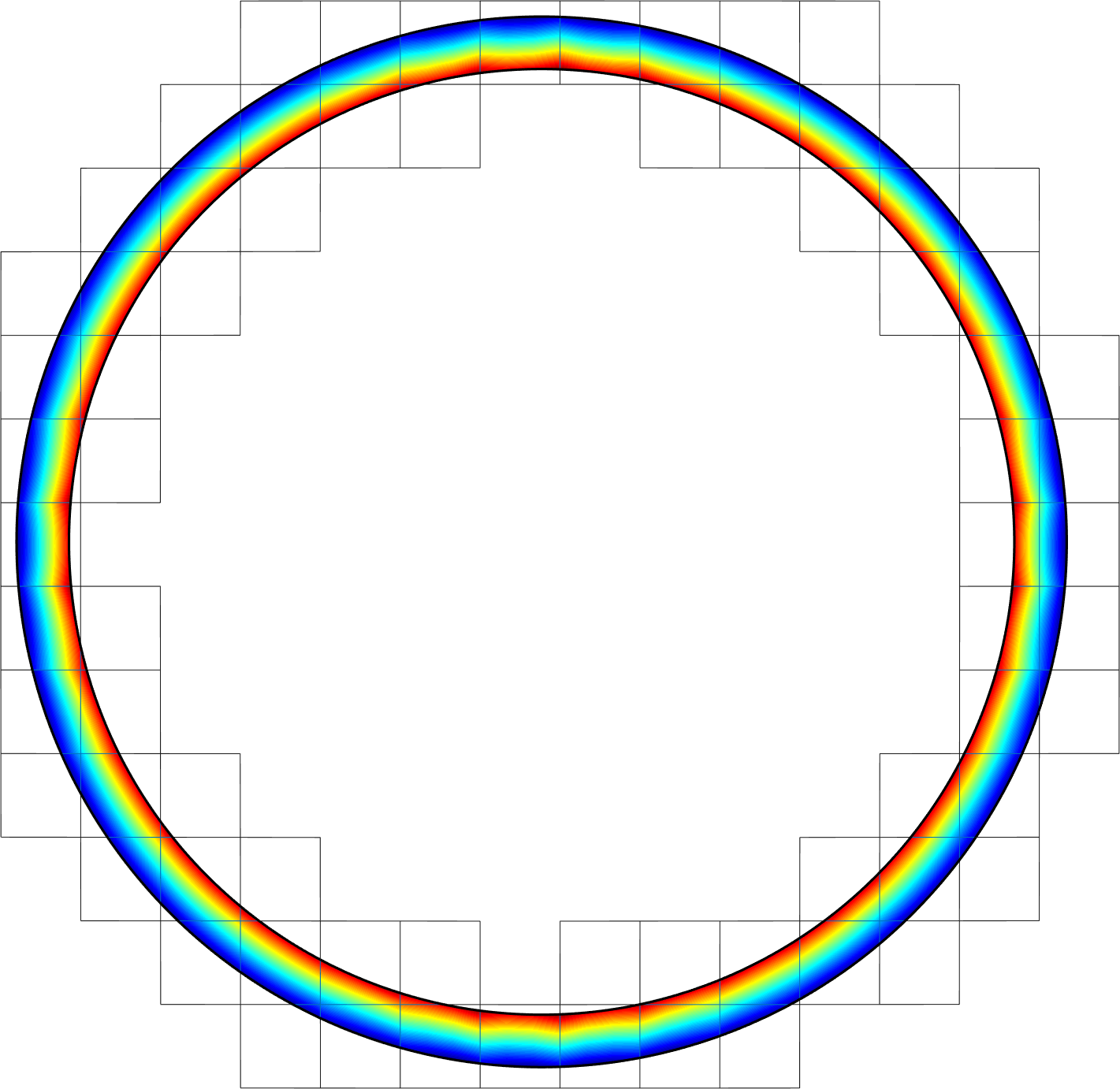} \quad
\includegraphics[width=0.31\linewidth]{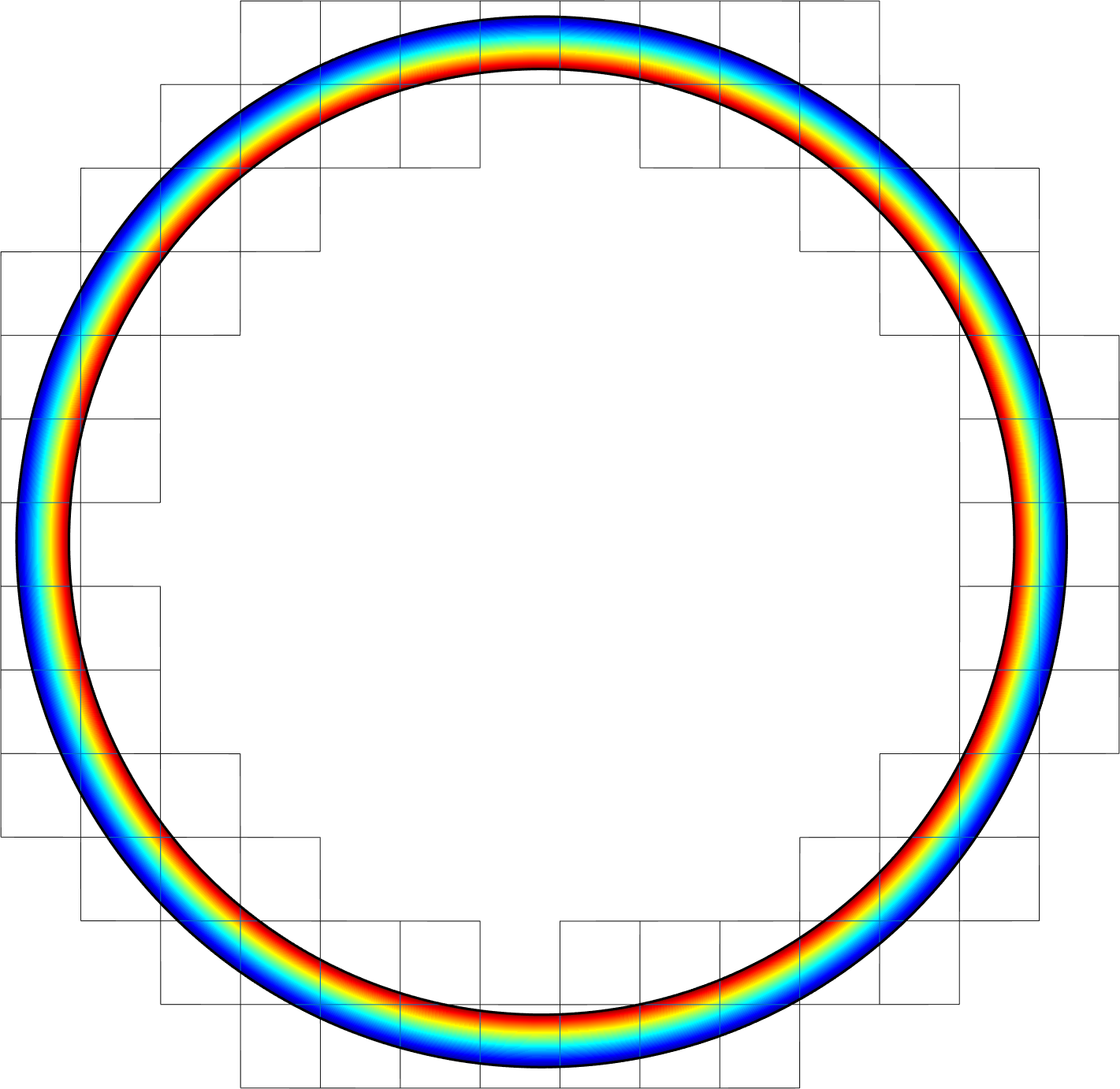}

\includegraphics[width=0.32\linewidth]{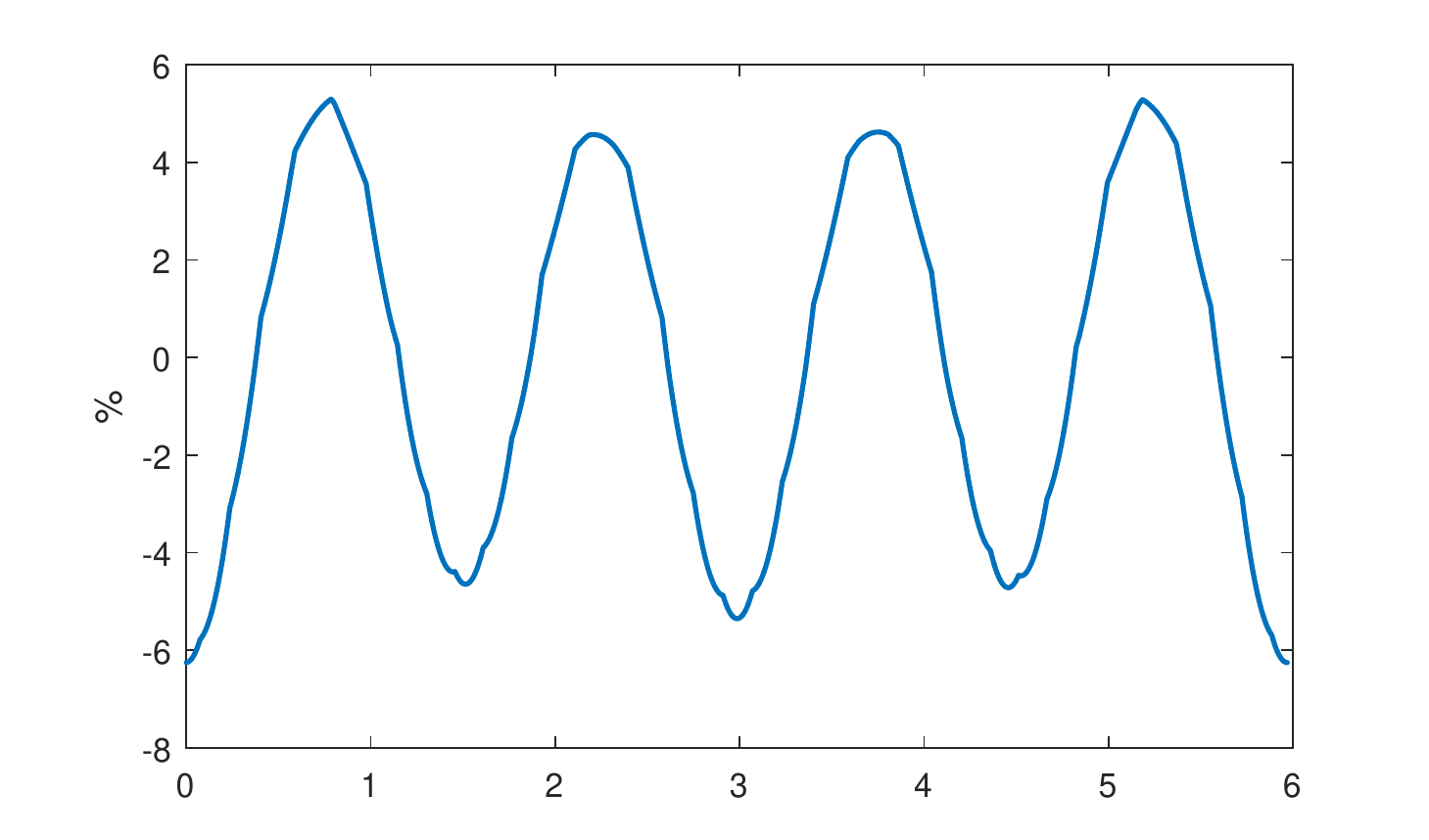} 
\includegraphics[width=0.32\linewidth]{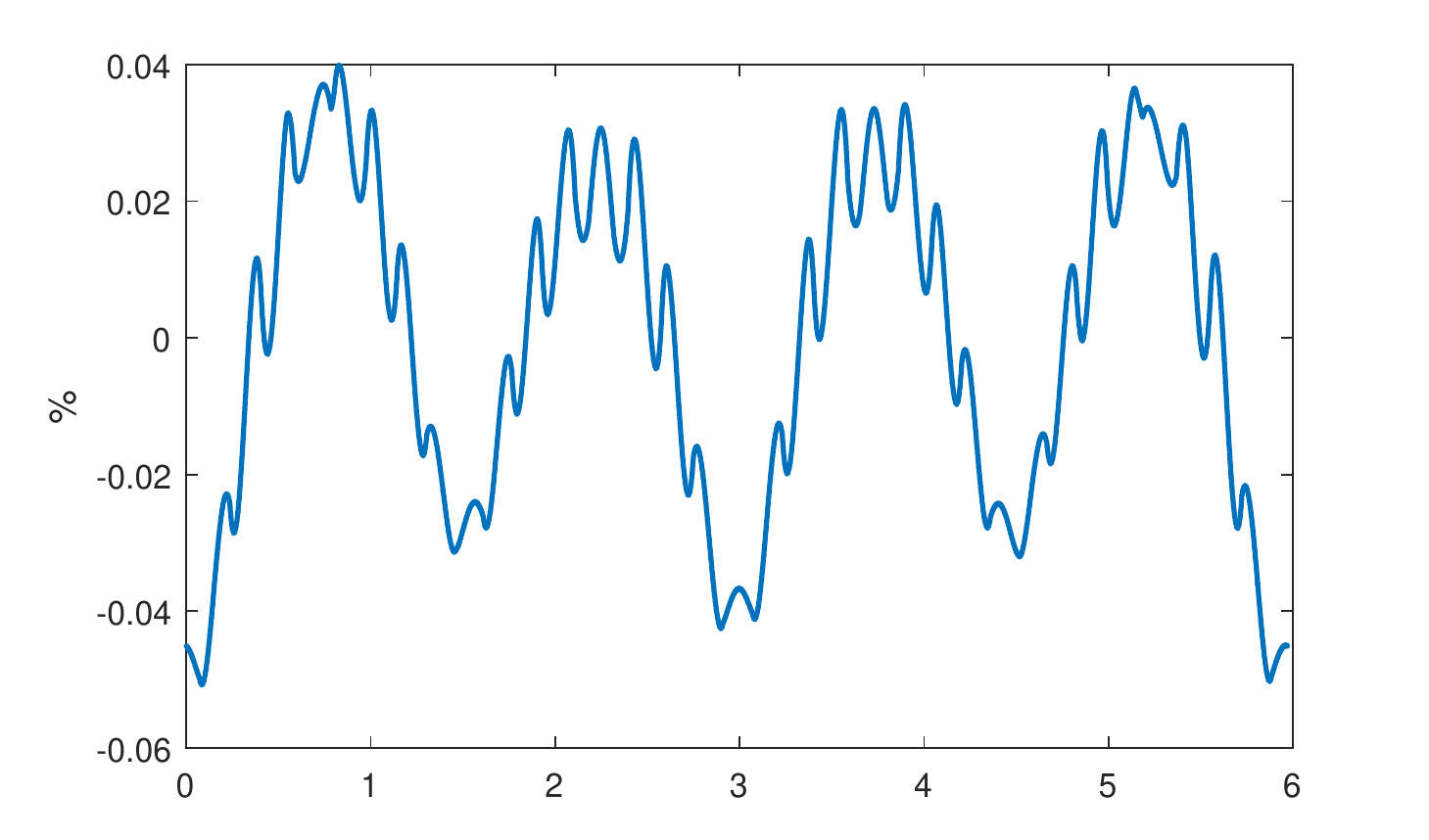} 
\includegraphics[width=0.32\linewidth]{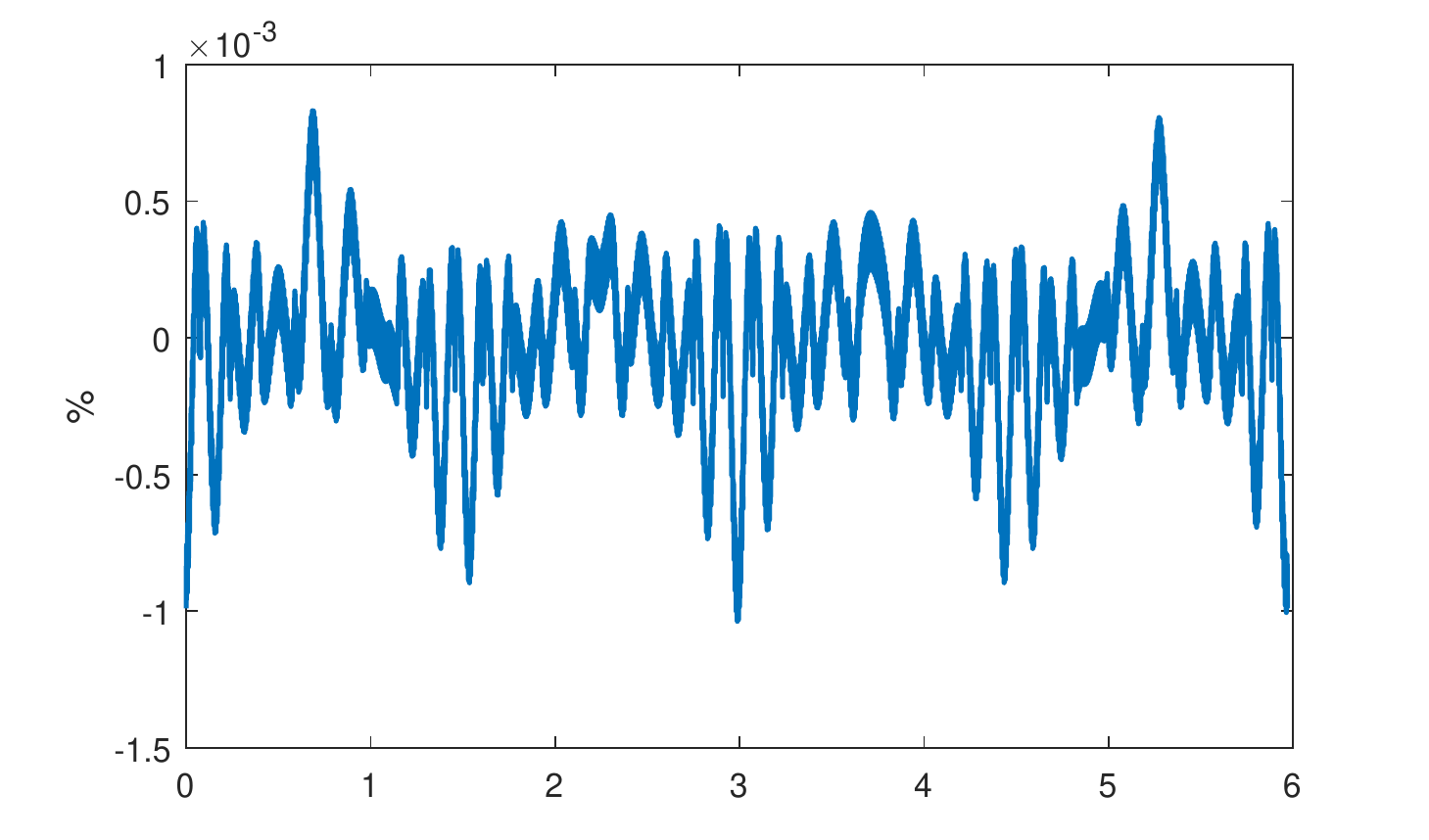}
\caption{Ring under centrifugal load. Top: Von Mises stresses for $p=\{1,2,3\}$ elements. Bottom: Variation in midline radial displacement relative to its mean.}
\label{fig:thin-ring}
\end{figure}

\subsection{Compound Bodies}
Compound bodies consist of several subdomains where the discrete solution on each subdomain is described on a separate mesh. Such a construction gives a number of powerful modeling possibilities, for example
\begin{itemize}
\item adding or changing small geometric details, for example fillets, without needing to reassemble all matrices
\item in each subdomain using different material properties, mesh resolution or elements
\item constructing objects from a number of predefined and preassembled building blocks
\end{itemize}
The weak enforcement of Dirichlet boundary conditions by Nitsche's method \cite{Nit70} in CutFEM makes it easy to construct compound bodies by a simple adaptation of Nitsche's method to interface conditions.

\begin{figure}
\centering
\includegraphics{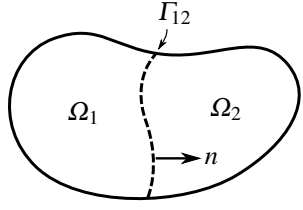}
\caption{Illustration of compound body consisting of two subdomains $\Omega_1$ and $\Omega_2$ joined at the interface $\Gamma_{12} = \partial\Omega_1 \cap \partial\Omega_2$ with prescribed normal $n = n_{\partial\Omega_1}$.}
\label{fig:compound-illustration}
\end{figure}
Consider the schematic compound body illustrated in Figure~\ref{fig:compound-illustration} for which we have the following interface conditions on $\Gamma_{12}$
\begin{subequations}
\label{eq:interface-condition}
\begin{alignat}{2}
[\bfu] &= 0 \quad &&\text{on $\Gamma_{12}$}
\label{eq:interface-u}
\\
[\bfsig(\bfu)\cdot \bfn] &= 0 \quad &&\text{on $\Gamma_{12}$}
\label{eq:interface-stress}
\end{alignat}
\end{subequations}
Summing the standard CutFEM formulations for all subdomains, with discrete functions on each subdomain described on its own background grid (and possibly its own type of elements), we for this example have the following interface term
\begin{align}
I_{12} = -(\bfsig(&\bfu)\cdot \bfn_{\partial\Omega_1},\bfv )_{\partial\Omega_1 \cap\Gamma_{12}}
-
(\bfsig(\bfu)\cdot \bfn_{\partial\Omega_2},\bfv )_{\partial\Omega_2 \cap\Gamma_{12}}
\end{align}
From this term and the interface conditions \eqref{eq:interface-condition} we derive Nitsche's method for the interface conditions via the calculation
\begin{align}
I_{12}
&=
-(\bfsig(\bfu)\cdot \bfn,\bfv )_{\partial\Omega_1 \cap\Gamma_{12}}
+
(\bfsig(\bfu)\cdot \bfn,\bfv )_{\partial\Omega_2 \cap\Gamma_{12}}
\\&=
-(\bfsig(\bfu)\cdot \bfn - \underbrace{[\bfsig(\bfu)\cdot \bfn]/2}_{=0 \text{ by \eqref{eq:interface-stress}}},\bfv )_{\partial\Omega_1 \cap\Gamma_{12}}
+
(\bfsig(\bfu)\cdot \bfn - \underbrace{[\bfsig(\bfu)\cdot \bfn]/2}_{=0 \text{ by \eqref{eq:interface-stress}}},\bfv )_{\partial\Omega_2 \cap\Gamma_{12}}
\\&=
-(\underbrace{\bfsig(\bfu)\cdot \bfn - [\bfsig(\bfu)\cdot \bfn]/2}_{=\langle\bfsig(\bfu)\cdot \bfn\rangle},[\bfv] )_{\Gamma_{12}}
\\&=
-(\langle\bfsig(\bfu)\cdot \bfn\rangle,[\bfv] )_{\Gamma_{12}}
-
\underbrace{([\bfu],\langle\bfsig(\bfv)\cdot \bfn\rangle)_{\Gamma_{12}}
+
\gamma_D h^{-1}
([\bfu],[\bfv])_{\Gamma_{12}}
}_{=0 \text{ by \eqref{eq:interface-u}}}
\end{align}

As an illustration of compound bodies we in Figures~\ref{fig:L-shape1} and \ref{fig:L-shape3} consider an L-shape domain where the inside corner has been drilled to avoid a stress singularity in the solution. We have constructed this domain out of three subdomains described on three different background grids and we use $p=2$ elements to represent the solution on each mesh. Note that there is no need for the various meshes to match at the interface as the interface conditions are enforced weakly.

\begin{figure}
\centering
\includegraphics[width=0.7\linewidth]{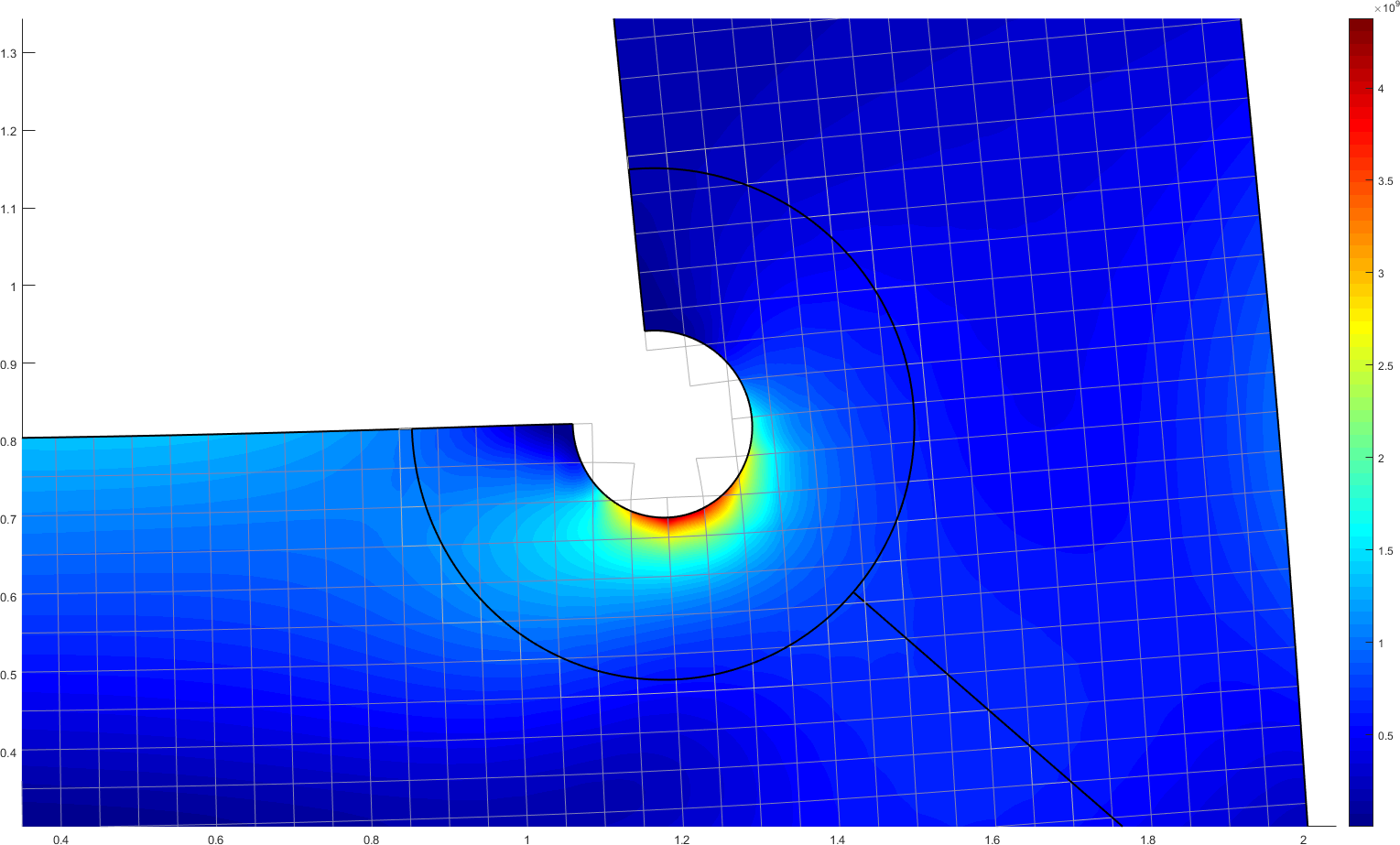}
\caption{Detail of drilled aluminium L-shape constructed as 3 separate bodies. Note the perfect flow of the von-Mises stress over the interfaces.}
\label{fig:L-shape1}
\end{figure}

\begin{figure}
\centering 
\includegraphics[width=0.7\linewidth]{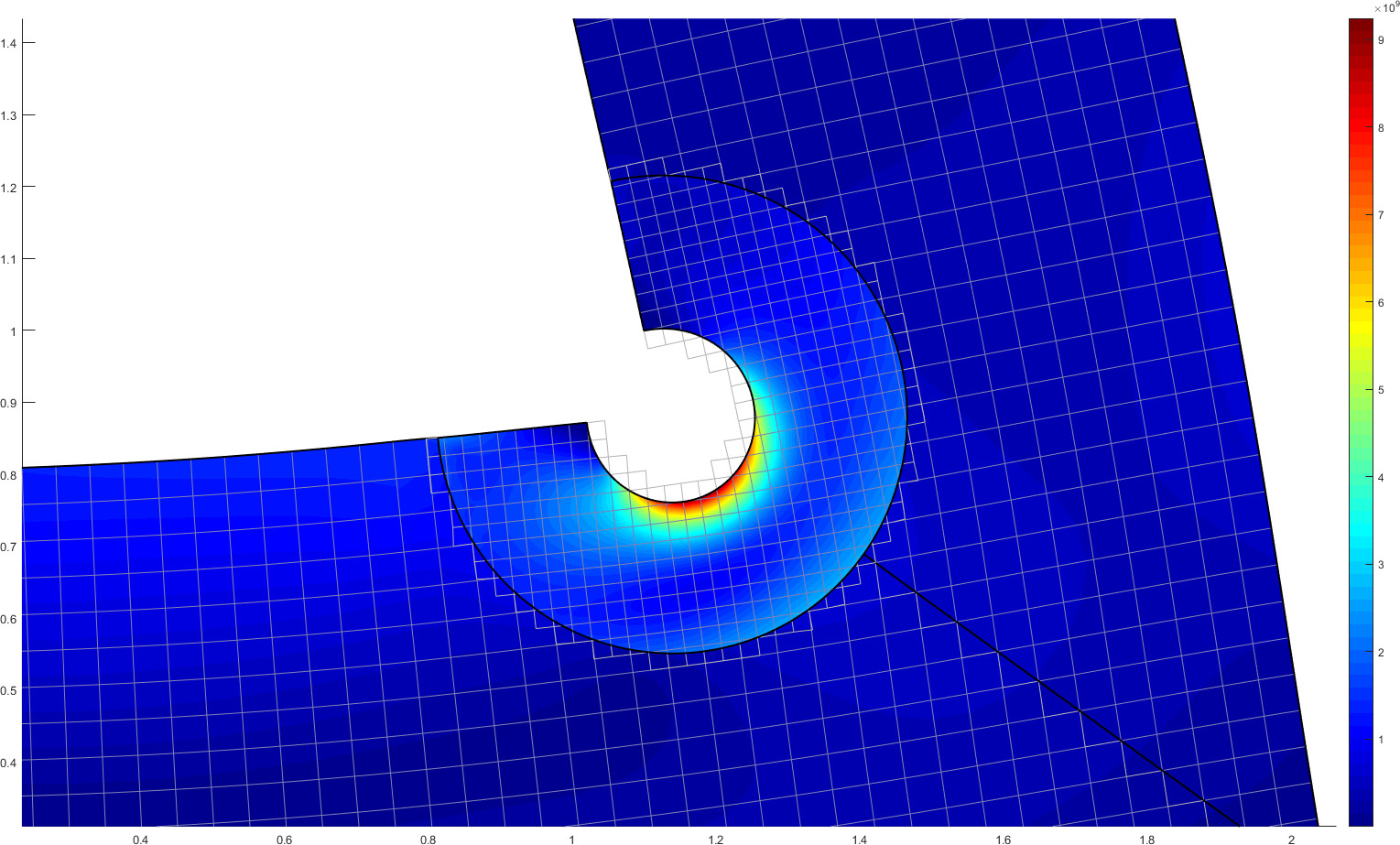}
\caption{Detail of drilled L-shape constructed as 3 separate bodies. Inner ring is steel, outer part of domain has 1/10 the stiffness of steel. Inner ring mesh also has higher resolution. Note the distinct discontinuity of the von-Mises stress over the interface which is due to the discontinuity of material stiffness.}
\label{fig:L-shape3}
\end{figure}

\subsection{Two-Grid Eigenvalue Estimation}

The use of structured background grids in CutFEM is very convenient when employing multi-grid techniques.
We here illustrate this with a two-grid method for estimation of eigenvalues \cite{XuZh01}. Consider two background grids, one coarse grid with mesh size $H$ and one fine with mesh size $h$, constructed such that the fine grid is a refinement of the coarse grid. The two-grid algorithm reads
\begin{enumerate}
\item Solve the eigenvalue problem on the coarse background grid: Find $(\bfu_H,\lambda_H) \in V_H\times\mathbb{R}$ such that
\begin{equation}
A_H(\bfu_H,\bfv) - \lambda_H m_H(\bfu_H,\bfv) = 0 \, , \quad\forall \bfv\in V_H
\end{equation}

\item Solve a single linear problem on the fine background grid: Find $\bfu_h\in V_h$
\begin{equation}
A_h(\bfu_h,\bfv) = \lambda_H m_h(\bfu_H,\bfv) \, , \quad\forall \bfv\in V_h
\end{equation}

\item Compute the Rayleigh quotient
\begin{equation}
\lambda_h = \frac{\| \bfu_h \|_{a_h}}{\| \bfu_h \|_{m_h}}
\end{equation}

\end{enumerate}

The analysis in \cite{XuZh01} for a conforming two-grid method implies that the optimal choice of mesh size $H$ for the coarse grid is given by the relationship
\begin{align}
H \propto \sqrt{h}
\end{align}
However in a cut situation, especially in light of the results in Section~\ref{sec:thin-geom}, it is very reasonable to assume that the coarse grid mesh size must be small enough to give an acceptable approximation of the eigenvalues and eigenmodes for this procedure to be efficient.

As a simple numerical example we consider the eigenvalue problem of a free ring with coarse and fine grids illustrated in Figure~\ref{fig:two-grid-mesh}. As this is a free eigenvalue problem we in step 2 of the algorithm must seek the solution $\bfu_h\in \bfV_h/\bm{RM}$. The resulting approximations of the 10:th eigenvalue and the corresponding eigenmode are presented in Figure~\ref{fig:two-grid-example} together with a reference solution computed using high order parametric conforming FEM.

\begin{figure}
\centering
\includegraphics[width=0.45\linewidth]{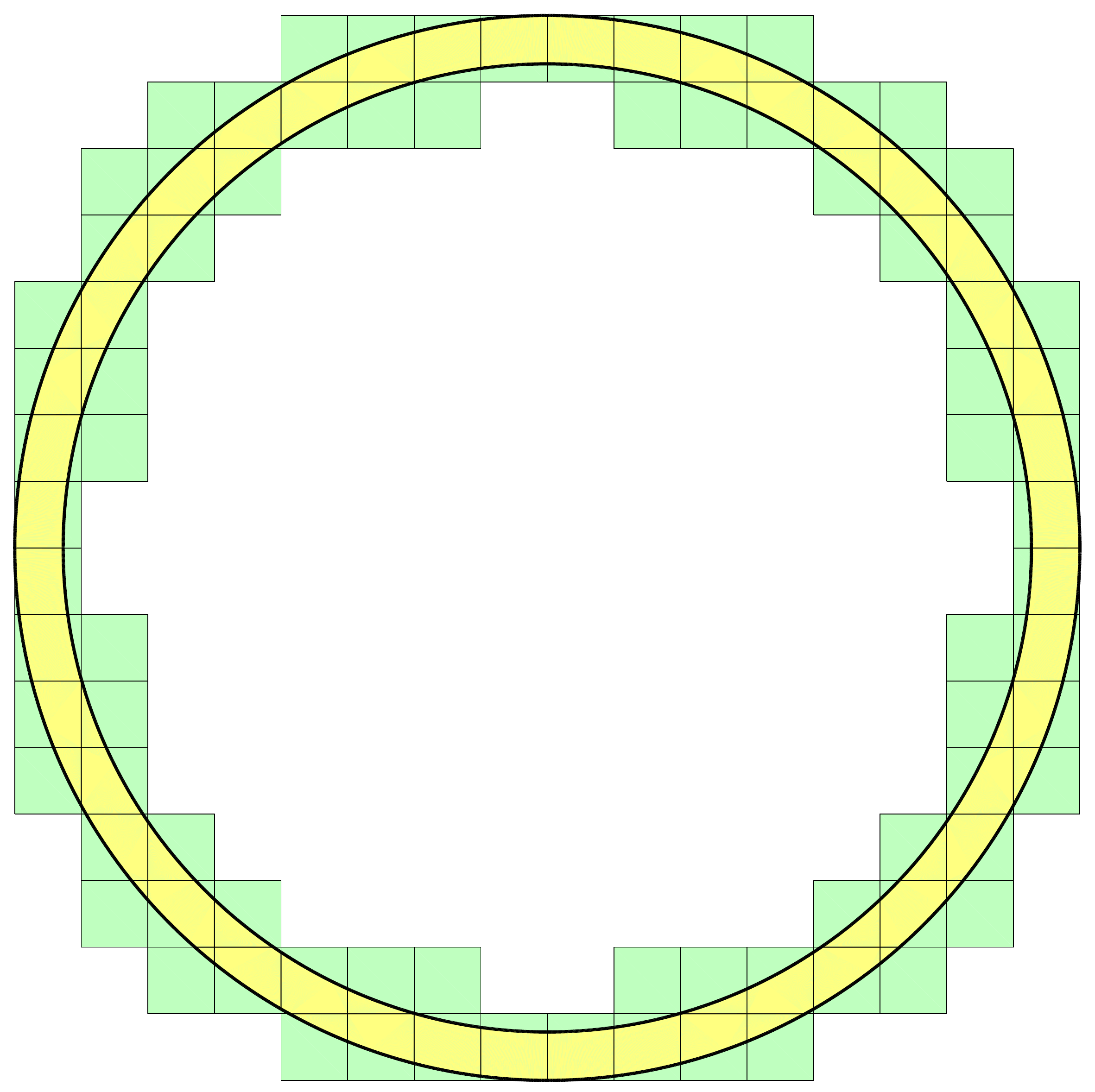}\quad
\includegraphics[width=0.45\linewidth]{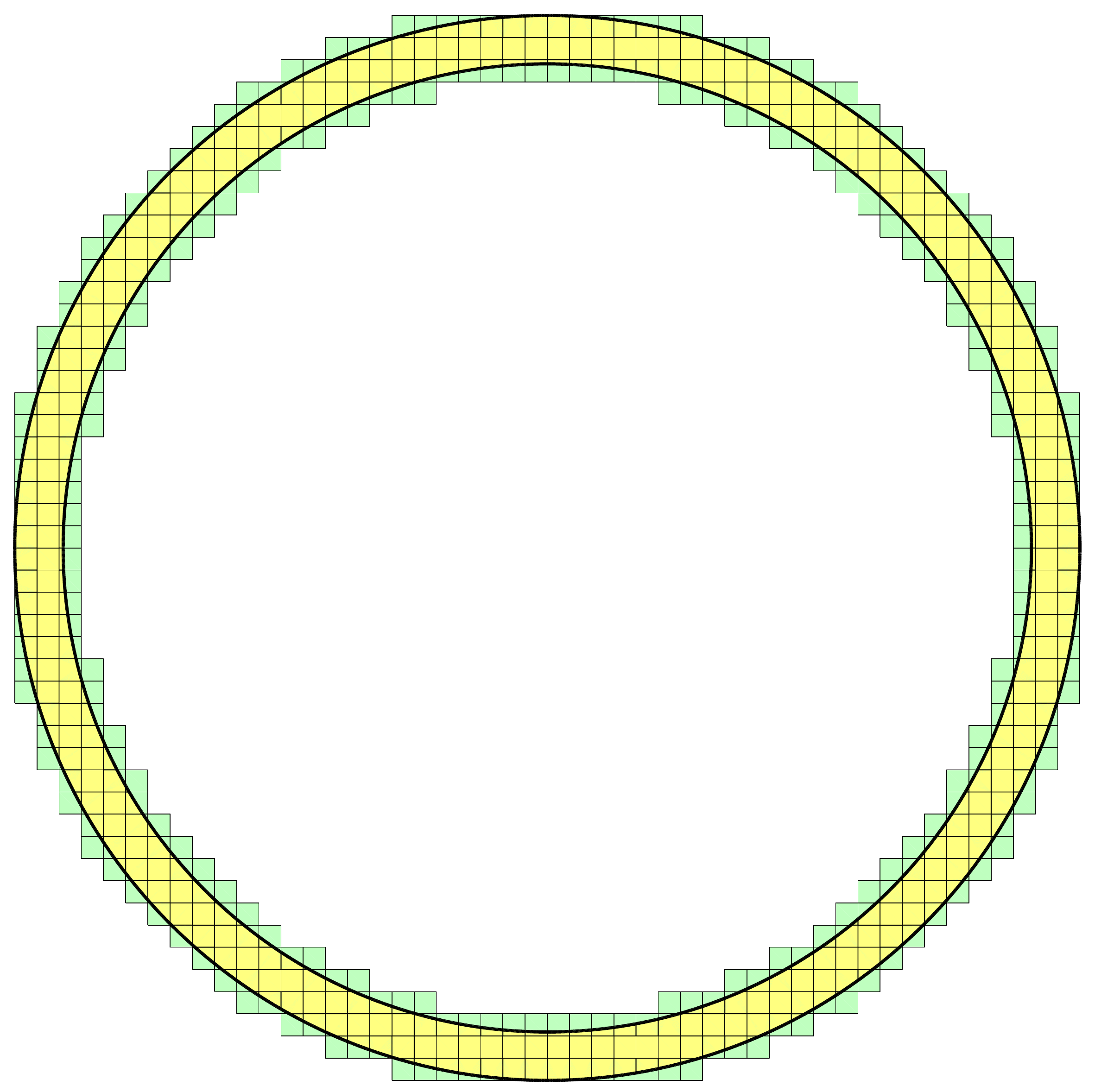}
\caption{Meshes used in two-grid estimation of the 10:th eigenvalue in the free eigenvalue problem for a steel ring. Here mesh sizes $H=0.1$ and $h = H/3$ are used for the coarse and fine scale meshes, respectively.}
\label{fig:two-grid-mesh}
\end{figure}

\begin{figure}
\centering
\includegraphics[width=0.32\linewidth]{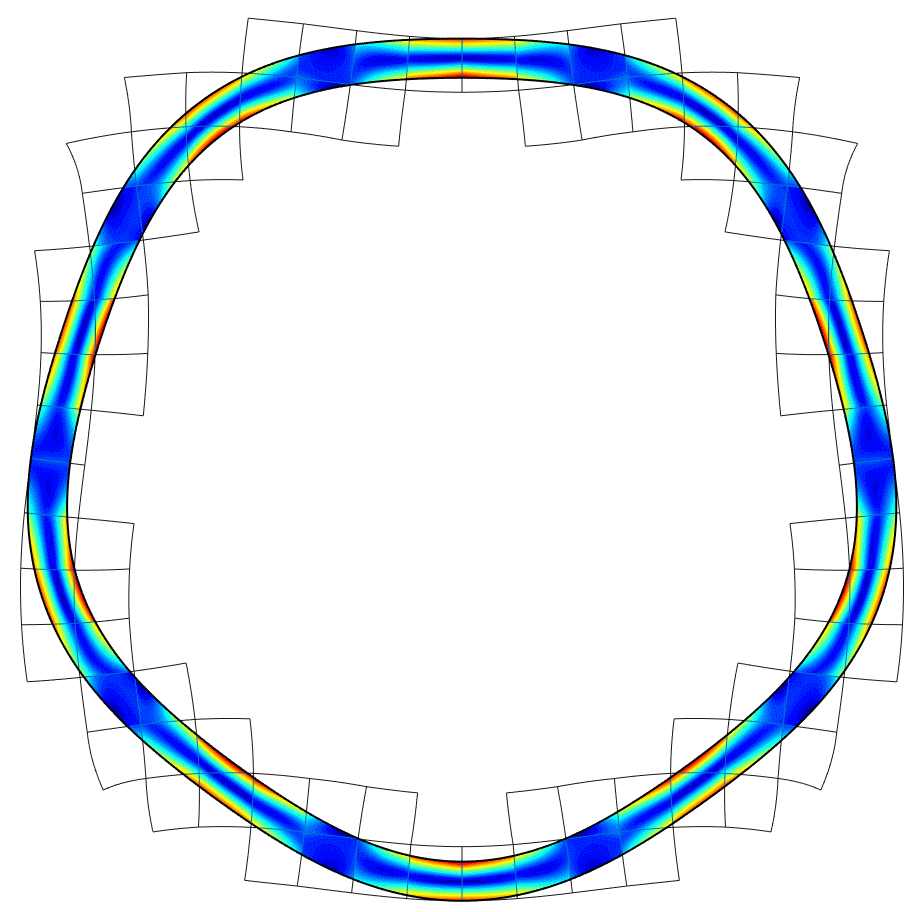}\
\includegraphics[width=0.32\linewidth]{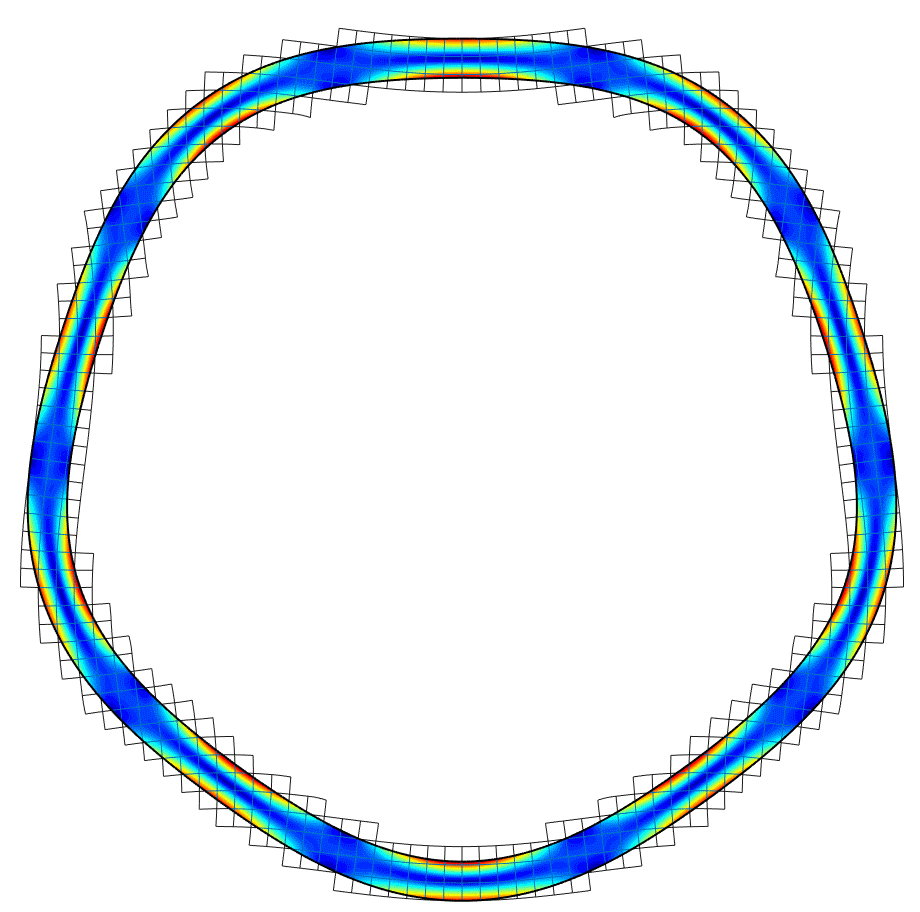}\
\includegraphics[width=0.32\linewidth]{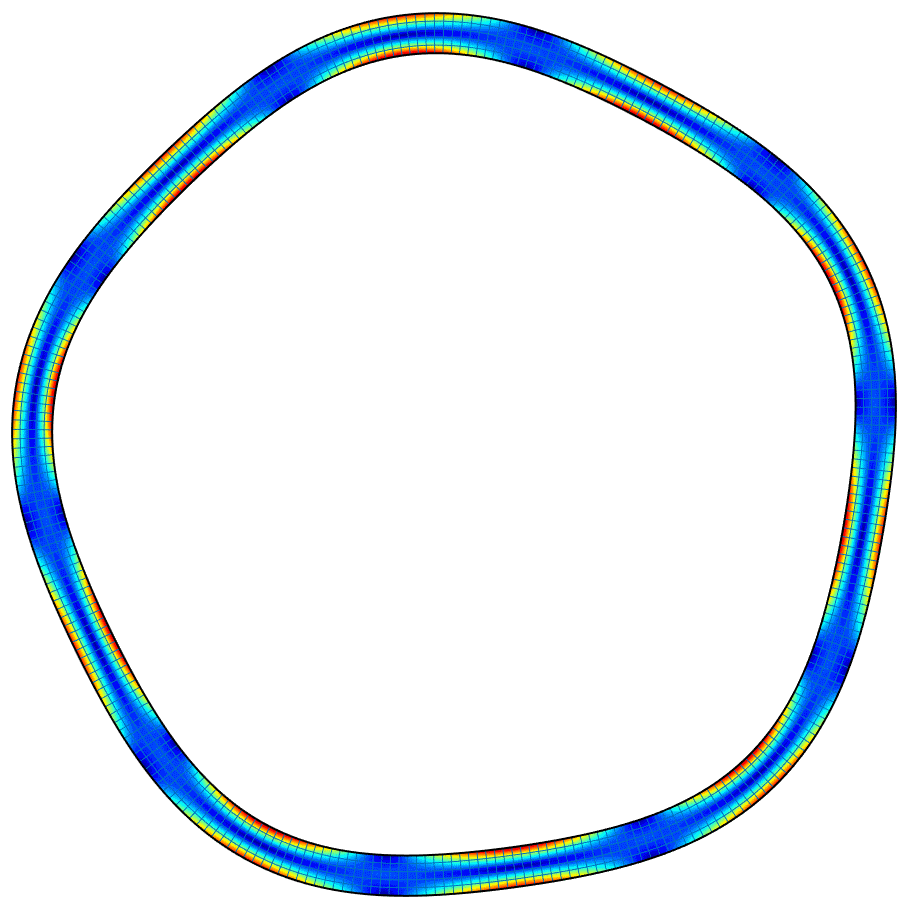}
\caption{Eigenmodes associated with the 10:th eigenvalue computed using CutFEM on a coarse grid (left), CutFEM two-grid estimation on a subgrid (middle), and parametric conforming FEM (right). CutFEM solutions are computed using $p=2$ elements while the reference FEM solution is computed using $p=4$ elements. Corresponding eigenvalues are $\lambda_H = 9.99836561\cdot 10^6$, $\lambda_h = 9.89445550\cdot 10^6$, and $\lambda_\mathrm{ref}=9.891710308\cdot 10^6$. Note that due to the rotational symmetry of the problem, the rotation of the eigenmode is undetermined.
}
\label{fig:two-grid-example}
\end{figure}

\section{Modeling of Embedded Structures with Application to Fibre Reinforced Materials}

In this Section we consider the embedding of membranes, thin plates, 
truss and beam elements into a higher dimensional elastic body. 
Simulations of this type are suitable for reinforced structures,
e.g., reinforced concrete, fiber reinforced polymers, and composite 
materials in general.

The approach is based on:
\begin{itemize}
\item Given a continuous finite element space, based on at least second 
second order polynomials, we define the finite element space for the 
thin structure as the restriction of the bulk finite element space 
to the thin structure which is geometrically modeled by an embedded 
curve or surface. 
\item  
To formulate a finite element method on the restricted or trace finite 
element space we employ continuous/discontinuous Galerkin approximations 
of plate and beam models. 
\end{itemize}
The thin structures are then modelled using the CutFEM paradigm and the 
stiffness of the embedded structure is in the most basic version, which we 
consider here, simply added to the bulk stiffness. This in turn means that  
the bulk stabilizes the CutFEM contributions and no additional stabilization 
is needed. In this initial report, we limit ourselves to 
two-dimensional bulk problems and thus the embedded structures will be 
limited to trusses and beams. In future work, the handling of Kirchhoff plates 
and beams and trusses of codimension 2 will be considered. The bulk problem may also be 
viewed as an interface problem on the case of an embedded surface in three 
dimensions or curve in two dimensions in order to more accurately approximate 
the bulk problem in the vicinity of the thin structure.

The work presented here is and extension of earlier work \cite{CeHaLa16}
where only membrane structures were considered, in which case a linear 
approximation in the bulk suffices.

\subsection{Modeling of Trusses and Beams}

Consider the elasticity equations (\ref{eq:standard}) in 
$\Omega\subset \mathbb{R}^2$. Embedded in $\Omega$ we have 
straight truss and beam elements. Curved elements can be 
modeled using piecewise affine approximations of the curve, 
following \cite{HaLa17}.

We consider modeling the truss and beam using tangential 
differential calculus. To this end we follow the exposition 
in \cite{HaLaLa14} and then make some simplifying assumptions. 
Let then $\Sigma$ denote line embedded in $\mathbb{R}^2$, with 
tangent vector $\bft$.  We let 
$\bfp:\mathbb{R}^2\rightarrow \Sigma$ be the closest point 
mapping, i.e. $\bfp (\bfx ) = \bfy$ 
where $\bfy \in \Sigma$ minimizes the Euclidean norm $|  \bfx - \bfy |_{\IR^3}$.
We define $\zeta$ as the signed distance function $\zeta(\bfx) := \pm\vert\bfx - \bfp(\bfx)\vert$, positive on one side of $\Sigma$ and negative 
on the other. The line $\Sigma$ is assumed to be the center line 
of a truss element with thickness $t$, which we for simplicity 
assume is constant. 

The  linear projector
$\bfP_\Sigma = \bfP_\Sigma(\bfx)$, onto the tangent line of 
$\Sigma$ at $\bfx\in\Sigma$, is given by
\begin{equation}
\bfP_\Sigma = \bft\otimes\bft . 
\end{equation}

Based on the assumption that planar cross sections orthogonal 
to the midline remain plane
after deformation we assume that the displacement takes the form 
\begin{equation} \label{displacementfield}
\bfu = \bfu_{0} + \theta  \zeta \bft  
\end{equation}
where $\bfu_{0}:\Sigma \rightarrow \mathbb{R}^2$ is the deformation of the midline, decomposed as 
\begin{equation}
\bfu_0 = u_n\bfn+u_t\bft , \quad u_n :=\bfu_0 \cdot\bfn , \;  u_t := \bfu_0\cdot\bft,
\end{equation}
where $\bfn \perp \bft$,
and $\theta:\Sigma \rightarrow \mathbb{R}$ is an angle representing 
an infinitesimal rotation. Both $\bfu_0$ and $\theta$ are assumed 
constant in the normal plane.

In Euler--Bernoulli beam theory the beam cross-section is assumed 
plane and orthogonal to the beam midline after deformation and no 
shear deformations occur. This means that we have
\begin{align}
\theta = \bft\cdot\nabla u_n := \partial_tu_n
\end{align}
This definition for $\theta$ in combination with \eqref{displacementfield} constitutes the
Euler--Bernoulli kinematic assumption.

We assume the usual Hooke's law for one dimensional structural members
\begin{align} \label{constitutive}
\bfsig_\Sigma(\bfu) = E\bfeps_\Sigma(\bfu)
\end{align}
where $\bfeps_\Sigma(\bfu):=\bfP_\Sigma\bfeps(\bfu) \bfP_\Sigma $.
We then notice that the strain energy density can be written
\begin{equation}
E\bfeps_\Sigma(\bfu) : \bfeps_\Sigma(\bfu) = E(\bft\cdot\bfeps(\bfu)\cdot\bft)^2 = E\left((\partial_t u_t)^2+ \zeta^2(\partial_t\theta)^2\right) .
\end{equation}
Assuming computations are being done per unit length in the third 
dimension, the second moment of inertia $I ={t^3}/{12}$ and the area
$A  = t$ and the total energy of the truss and beam structures are 
postulated as
\begin{equation}
{\cal E} = {\cal E}_\text{T}+{\cal E}_\text{B}
\end{equation}
where the truss energy
\begin{equation}
{\cal E}_\text{T}:= \frac12\int_\Sigma
E A \left(\partial_t  u_t\right)^2   \, d\Sigma
- \int_\Sigma A\bff\cdot\bft \, u_t\, d\Sigma 
\end{equation}
and the beam energy
\begin{equation}
{\cal E}_\text{B}:= \frac12\int_\Sigma
E I  \,\left(\partial^2_{tt} u_n \right)^2   \, d\Sigma
 -\int_\Sigma A \bff\cdot \bfn\, u_n\, d\Sigma
\end{equation}
where $\partial^2_{tt}: =\partial_{t}\partial_{t}$. Assuming zero 
displacements and rotations at the end points of $\Sigma$, we 
thus seek $(u_t,u_n)\in H^1_0(\Sigma)\times H^2_0(\Sigma)$ such that
\begin{equation}
\int_\Sigma
E A \partial_t  u_t \partial_t  v_t \, d\Sigma + \int_\Sigma
E I  \,\partial^2_{tt} u_n \partial^2_{tt} v_n  \, d\Sigma
= \int_\Sigma A(f_t  v_t+f_n  v_n ) \, d\Sigma 
\end{equation}
for all $(v_t,v_n)\in H^1_0(\Sigma)\times H^2_0(\Sigma)$.

\subsection{Finite Element Discretization}

In analogy with the preceding discussion, we now
let $\mcK_{h}$ be a  quasiuniform 
partition, with mesh parameter $h$, of the bulk domain $\Omega$ 
into shape regular elements $K$, and denote the set of faces 
in $\mcK_{h}$ by $\mcF_{h}$. The elements of the bulk mesh cut 
by $\Sigma$, and the corresponding element faces, are denoted 
\begin{align}
\mcK_h(\Sigma) &=
\{ K \in \mcK_h \, : \, \overline{K} \cap \Gamma \neq \emptyset \}
\\
\mcF_h(\Sigma) &=
\{ F \in \mcF_h \, : \, F \cap \partial K \neq \emptyset \, , \ K \in \mcK_h(\Sigma) \}
\end{align}
The intersection points between $\Sigma$ and element faces in 
$\mcK_h(\Sigma)$ is denoted
\begin{equation}
\mcP_h(\Sigma)= \{ \bfx: \, \bfx = F \cap\Sigma, \,F \in \mcF_h \} ,
\end{equation}
and we assume that this is a discrete set of points (thus excluding 
the case where any $F\in\mcF_h$ coincides with a part of $\Sigma$ 
of nonzero measure). 
Setting as above
\begin{equation}
a(\bfv,\bfw) := 2\mu (\bfeps(\bfv),\bfeps(\bfw))_\Omega 
+ \lambda (\text{tr}(\bfeps(\bfv)),\text{tr}(\bfeps(\bfw)))_\Omega
\end{equation}
and introducing
\begin{equation}
b(\bfv,\bfw) := (E A \partial_t  (\bfv\cdot\bft) ,\partial_t (\bfw\cdot\bft))_\Sigma
\end{equation}
and
\begin{equation}
c(\bfv,\bfw) :=(E I  \,\partial^2_{tt} (\bfv\cdot\bfn), \partial^2_{tt} (\bfw\cdot\bfn))_\Sigma
\end{equation}
and defining $a_\text{tot}(\bfv,\bfw) :=a(\bfv,\bfw)+b(\bfv,\bfw)+c(\bfv,\bfw)$
we propose the following continuous/discontinuous Galerkin method. With
\begin{equation}
V_h := \{\bfv:\; \bfv\vert_K\in [P^2(K)]^2\; \forall K \in \mcK_h , \; \bfv \in [C^0(\Omega)]^2, \; \bfv = \bf0\; \text{on $\partial\Omega$}\}
\end{equation}
we seek $\bfu\in V_h$ such that
\begin{align}\nonumber
(A\bff,\bfv)_\Sigma + (\bff, \bfv)_{\Omega} = {}& a_\text{tot}(\bfu,\bfv) 
-\sum_{\bfx\in\mcP_h(\Sigma)}\langle EI \partial^2_{tt} (\bfu (\bfx)\cdot\bfn)\rangle [\partial_{t} (\bfv (\bfx)\cdot\bfn)]\\ \nonumber
 & -\sum_{\bfx\in\mcP_h(\Sigma)}\langle EI \partial^2_{tt} (\bfv (\bfx)\cdot\bfn)\rangle [\partial_{t} (\bfu (\bfx)\cdot\bfn)]\\ 
& + \sum_{\bfx\in\mcP_h(\Sigma)}\frac{\beta EI}{h}[\partial_{t} (\bfu (\bfx)\cdot\bfn)][\partial_{t} (\bfv (\bfx)\cdot\bfn)]
\end{align}
for all $\bfv\in V_h$. Here
\begin{equation}
\langle f (\bfx)\rangle := \frac{1}{2}\left(f(\bfx+\epsilon \bft)+f(\bfx-\epsilon \bft)\right)\vert_{\epsilon\downarrow 0}, \quad [f(\bfx)] :=  \left(f(\bfx+\epsilon \bft)-f(\bfx-\epsilon \bft)\right)\vert_{\epsilon\downarrow 0}.
\end{equation}
The terms on the discrete set $\mcP_h(\Sigma)$ are associated with the work of the end moments on the end rotation which occur due to the lack of $C^1(\Omega)$ continuity of the approximation, cf. \cite{HaLa17}.  In this formulation, stabilization terms of the kind discussed above can be added to ensure coercivity. Due to the presence of the bulk equations, the need for these terms is however mitigiated.

\subsection{Numerical Examples}
We consider a bulk material on the domain $\Omega=(0,4)\times(0,1)$ with $E=300$, $\nu=1/3$ in plane strain.
The domain is fixed at the left end and unloaded on the rest of the boundary.
It is first loaded with a volume load $\bff = (0,-1)$ and the displacements using only the bulk material are shown in Fig. \ref{bulkdef} with corresponding
(Frobenius) norm of the stresses shown in Fig. \ref{bulkmises}. We next introduce two truss elements as shown in Fig \ref{trusses} at $y=0.249$ and at $y=0.751$.
The thickness of the trusses was set to $t=0.1$ and Young's modulus $E=10^4$. In Figs. \ref{trussdef} and \ref{trussmises} we show the corresponding deformations and norm of the stresses.
Finally we remove the trusses and add a beam at $y=0.501$, with thickness $t=0.1$ and $E=10^6$. In Figs. \ref{beamdef} and \ref{beammises} we show the corresponding deformations and norm of the stresses.

In our second example we use the same boundary conditions but pull the material using $\bff=(100,0)$. The corresponding displacements using only the bulk material are shown in Fig. \ref{pulledbulk} with associated norm of the stresses shown in Fig. \ref{pulledbulkmises}. We then insert the truss elements from the previous example and apply the load again to obtain the deformation shown in Fig. \ref{pulledtruss} with associated norm of stresses in Fig. \ref{pulledtrussmises}.

In all these examples we see that the added lower dimensional structural elements have a profound impact on the deformation and stress distribution in the bulk material.
The possibility of modeling lower dimensional structural elements without meshing is clearly beneficial and can be used for example in optimization algorithms where the 
structural members are required to move around to find their optimal shape and place.

\begin{figure}
\centering
\includegraphics[width=0.7\linewidth]{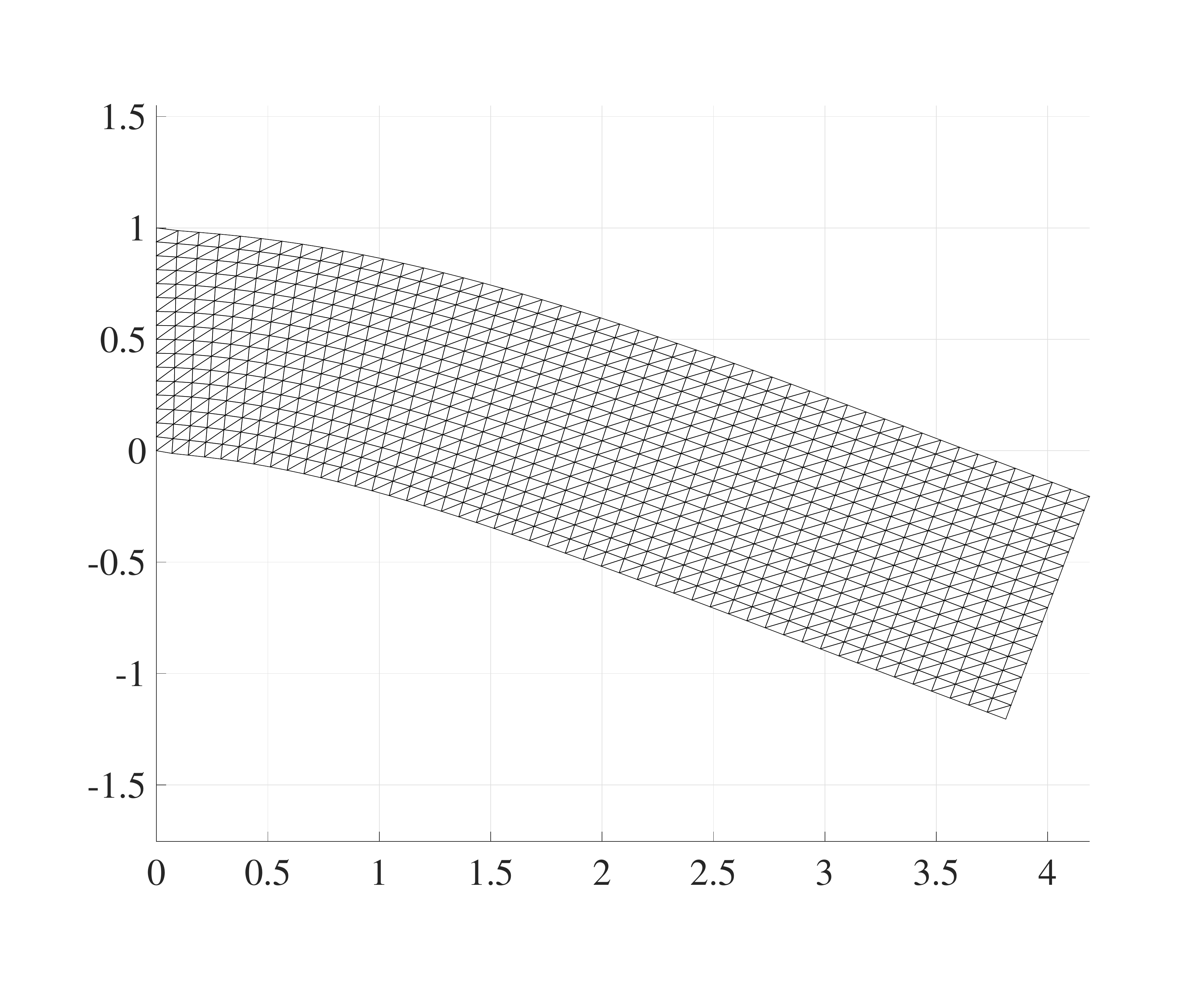}
\caption{Deformation of the bulk material.\label{bulkdef}}
\end{figure}
\begin{figure}
\centering
\includegraphics[width=0.7\linewidth]{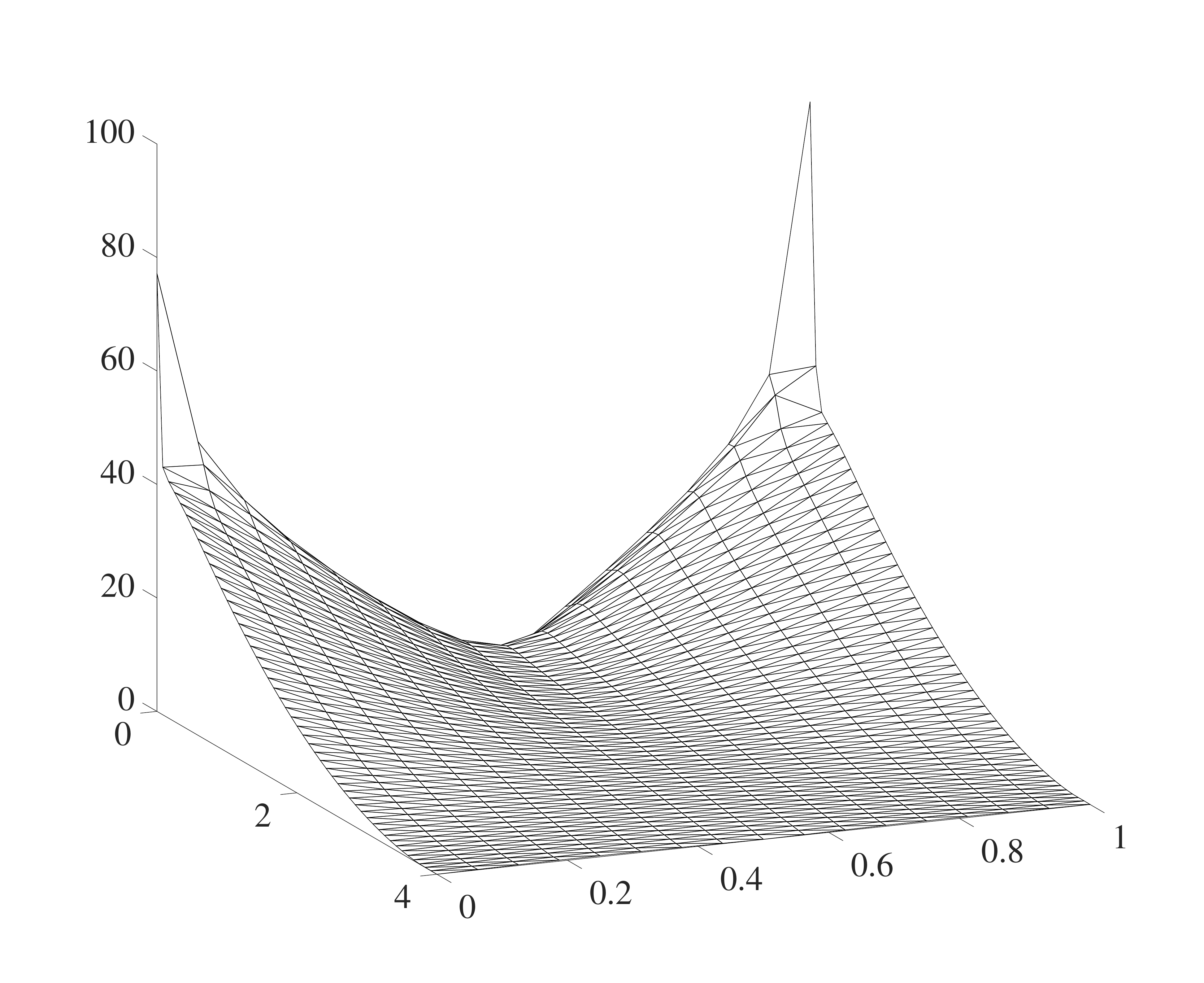}
\caption{Norm of the stresses in the bulk.\label{bulkmises}}
\end{figure}
\begin{figure}
\centering
\includegraphics[width=0.7\linewidth]{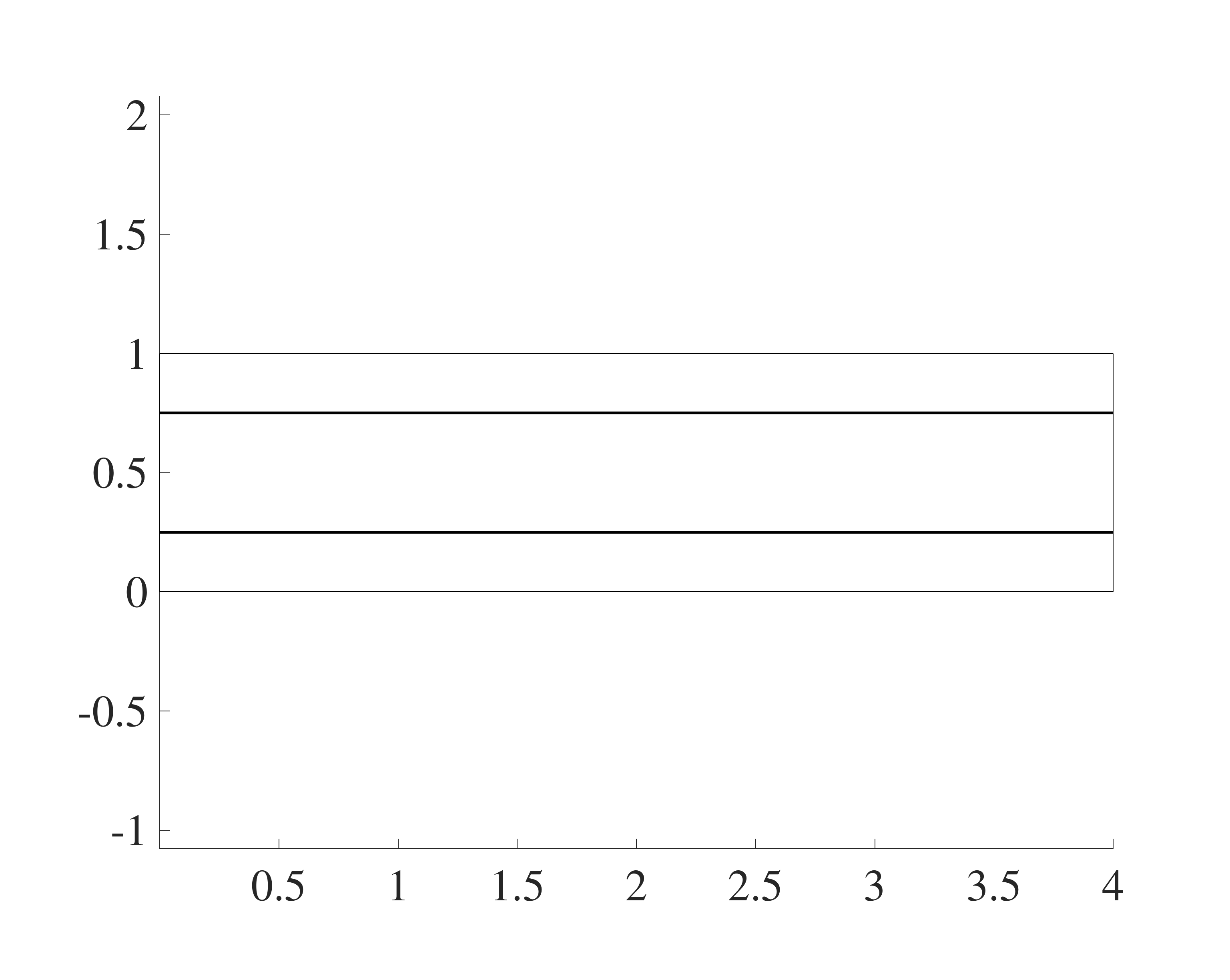}
\caption{Trusses in the bulk.\label{trusses}}
\end{figure}
\begin{figure}
\centering
\includegraphics[width=0.7\linewidth]{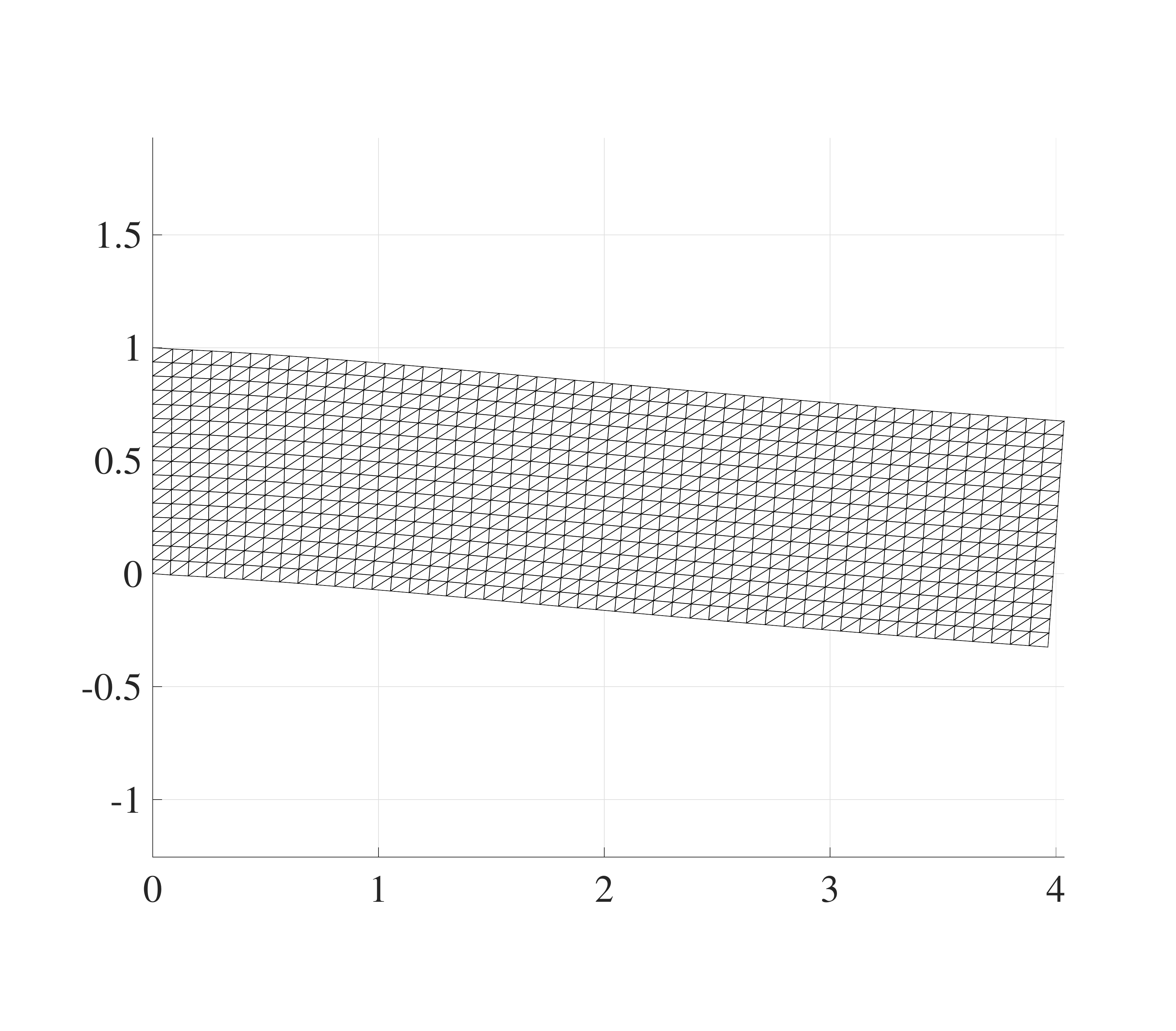}
\caption{Deformation with trusses.\label{trussdef}}
\end{figure}
\begin{figure}
\centering
\includegraphics[width=0.7\linewidth]{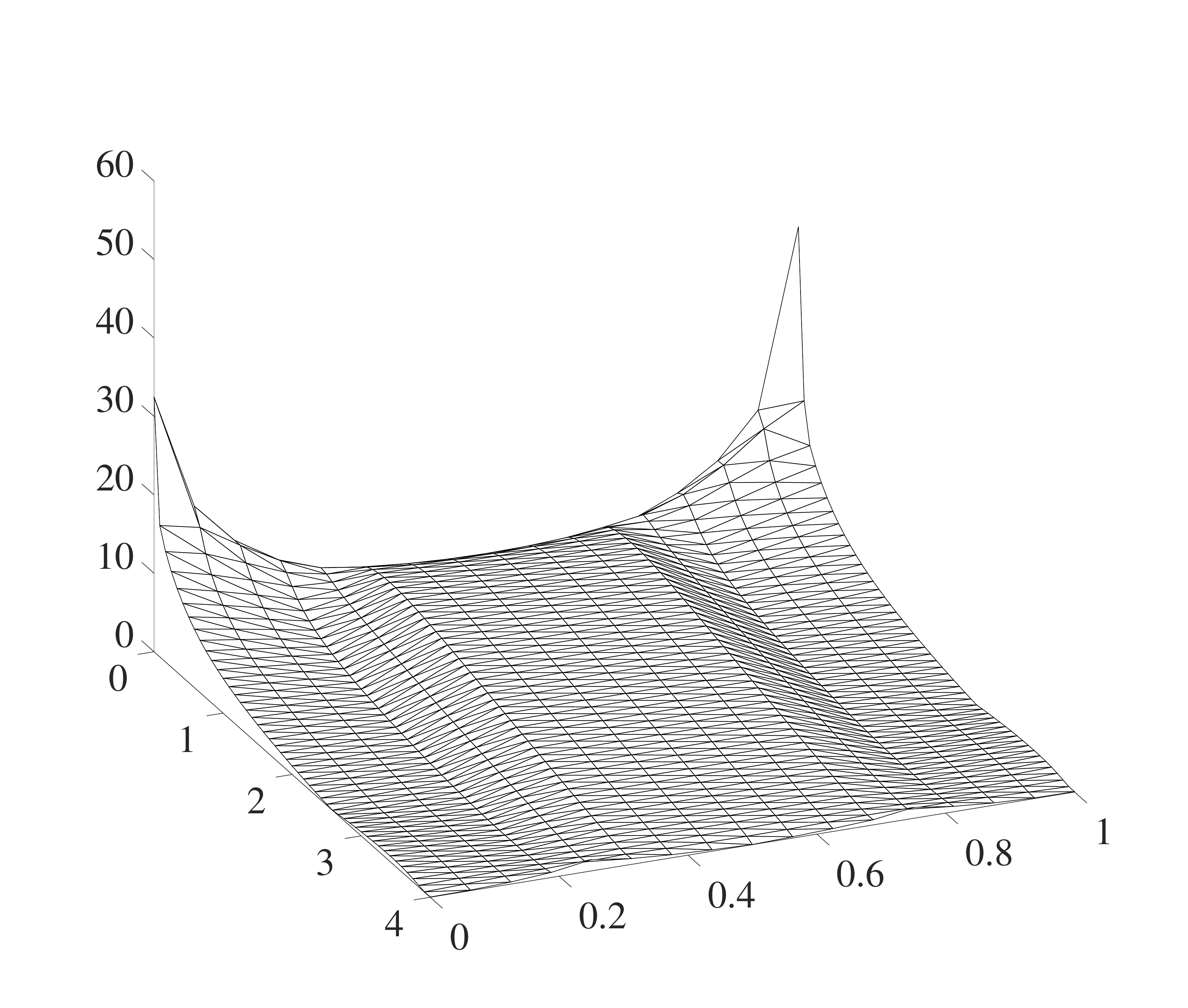}
\caption{Norm of the stresses with trusses.\label{trussmises}}
\end{figure}
\begin{figure}
\centering
\includegraphics[width=0.7\linewidth]{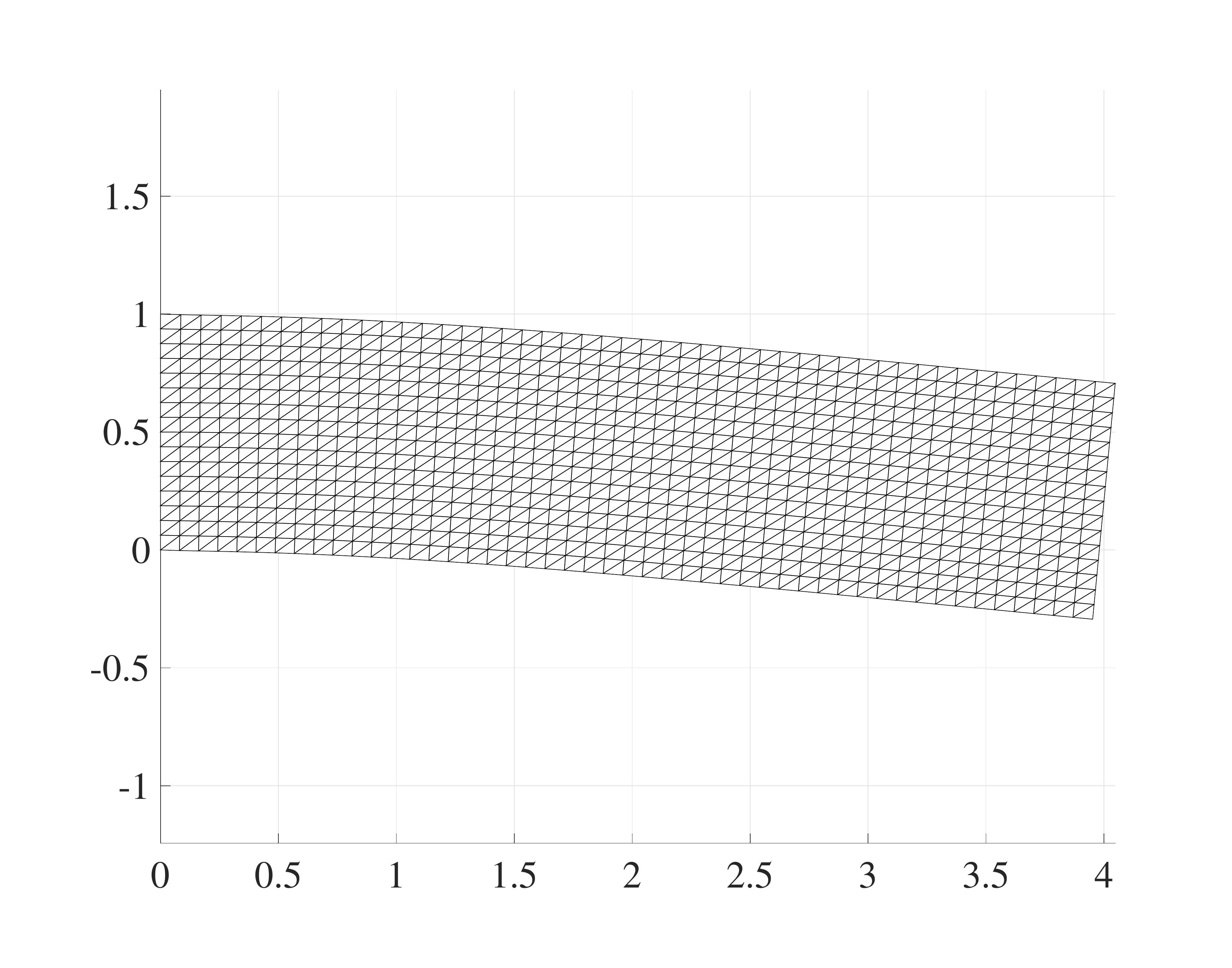}
\caption{Deformation with beam.\label{beamdef}}
\end{figure}
\begin{figure}
\centering
\includegraphics[width=0.7\linewidth]{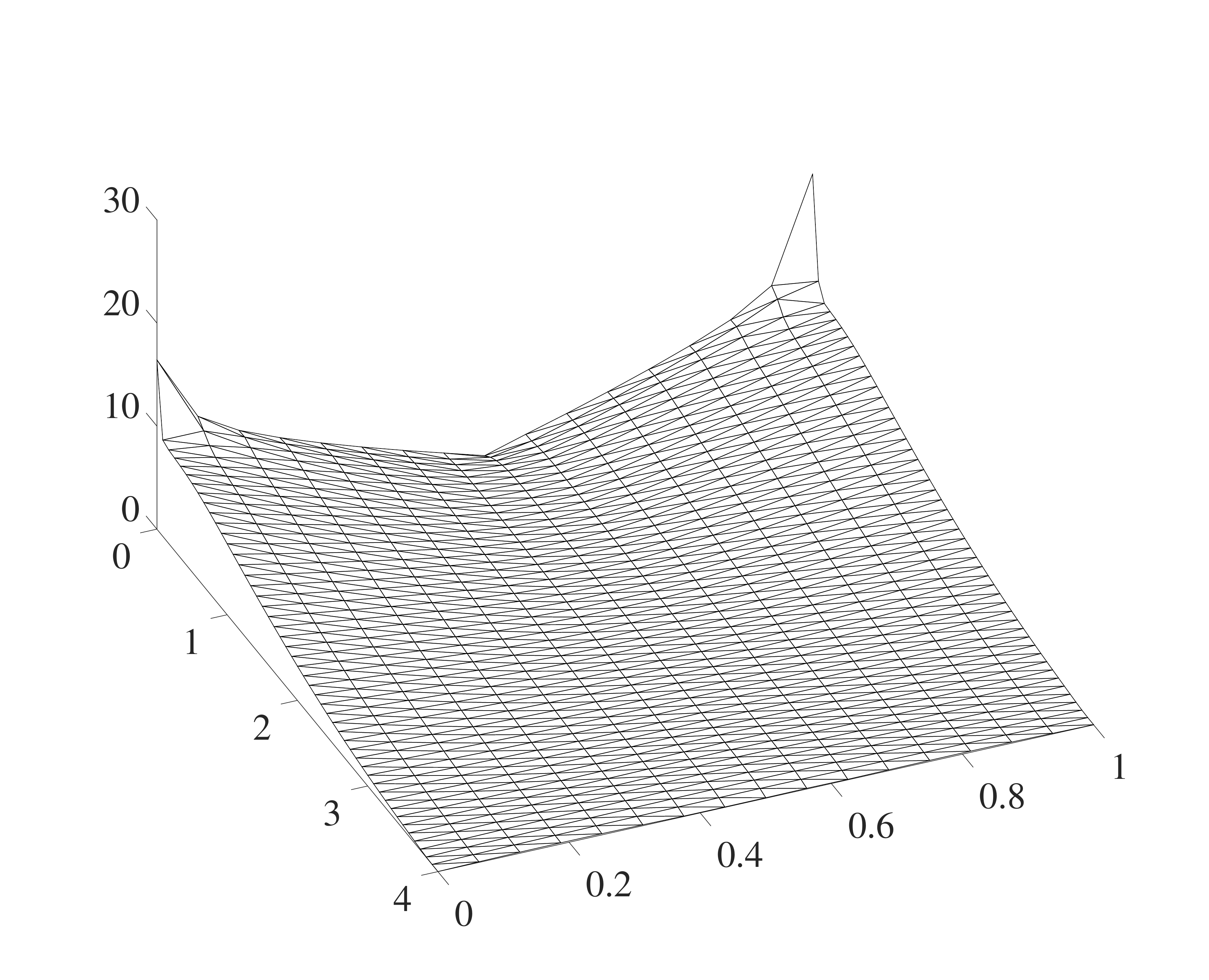}
\caption{Norm of the stresses with beam.\label{beammises}}
\end{figure}
\begin{figure}
\centering
\includegraphics[width=0.7\linewidth]{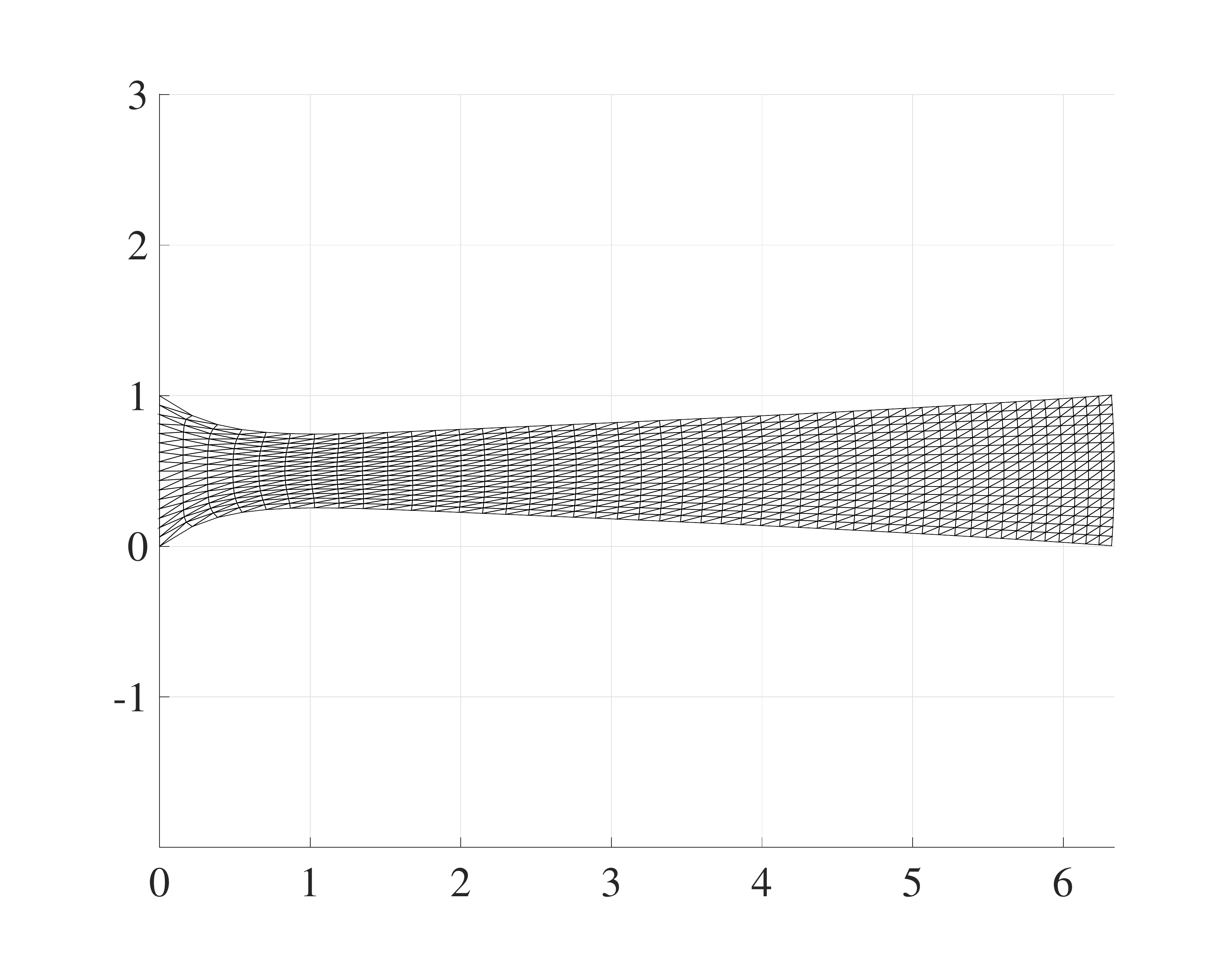}
\caption{Deformation of pulled bulk material.\label{pulledbulk}}
\end{figure}
\begin{figure}
\centering
\includegraphics[width=0.7\linewidth]{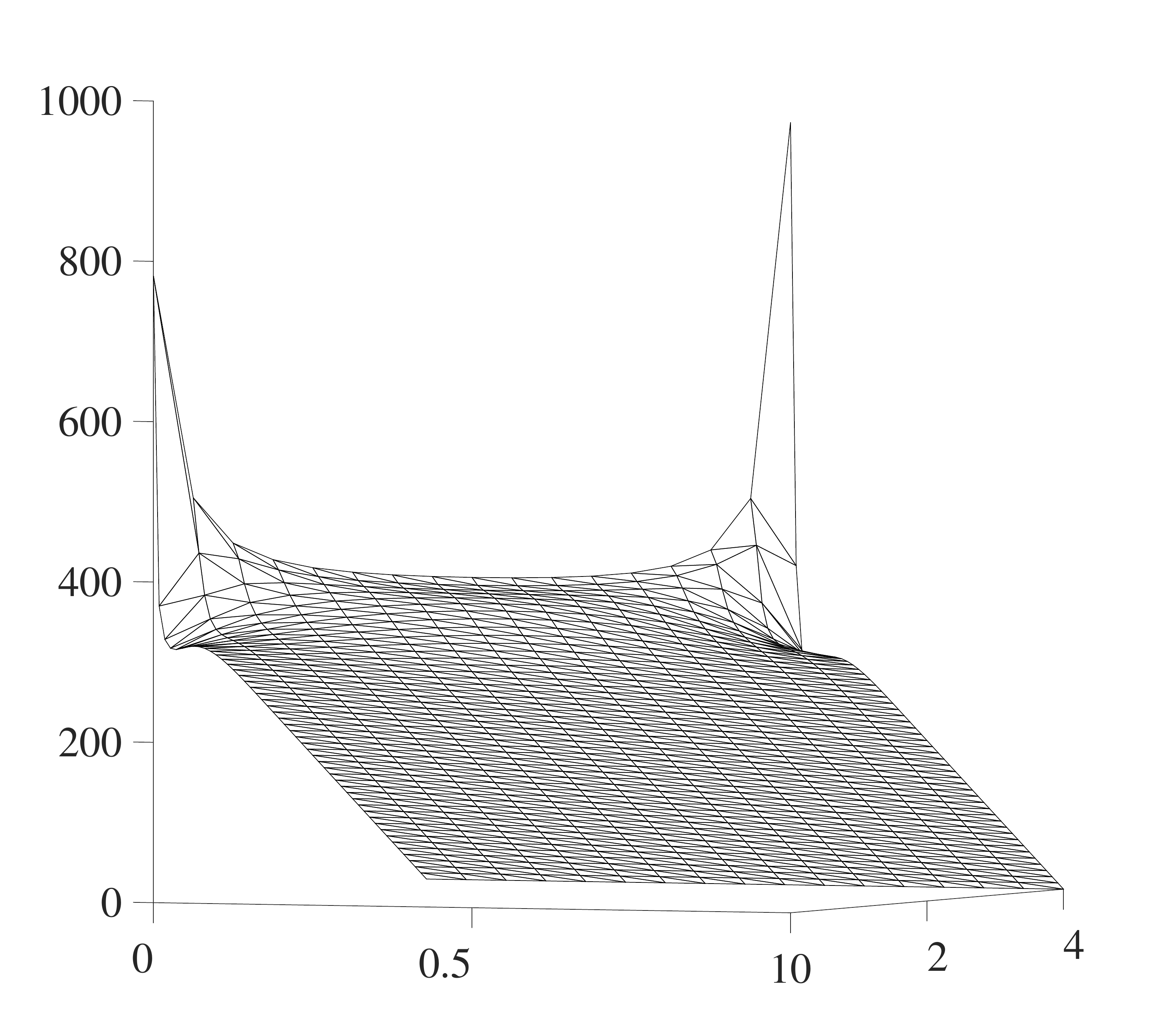}
\caption{Norm of the stresses; pulled bulk material.\label{pulledbulkmises}}
\end{figure}
\begin{figure}
\centering
\includegraphics[width=0.7\linewidth]{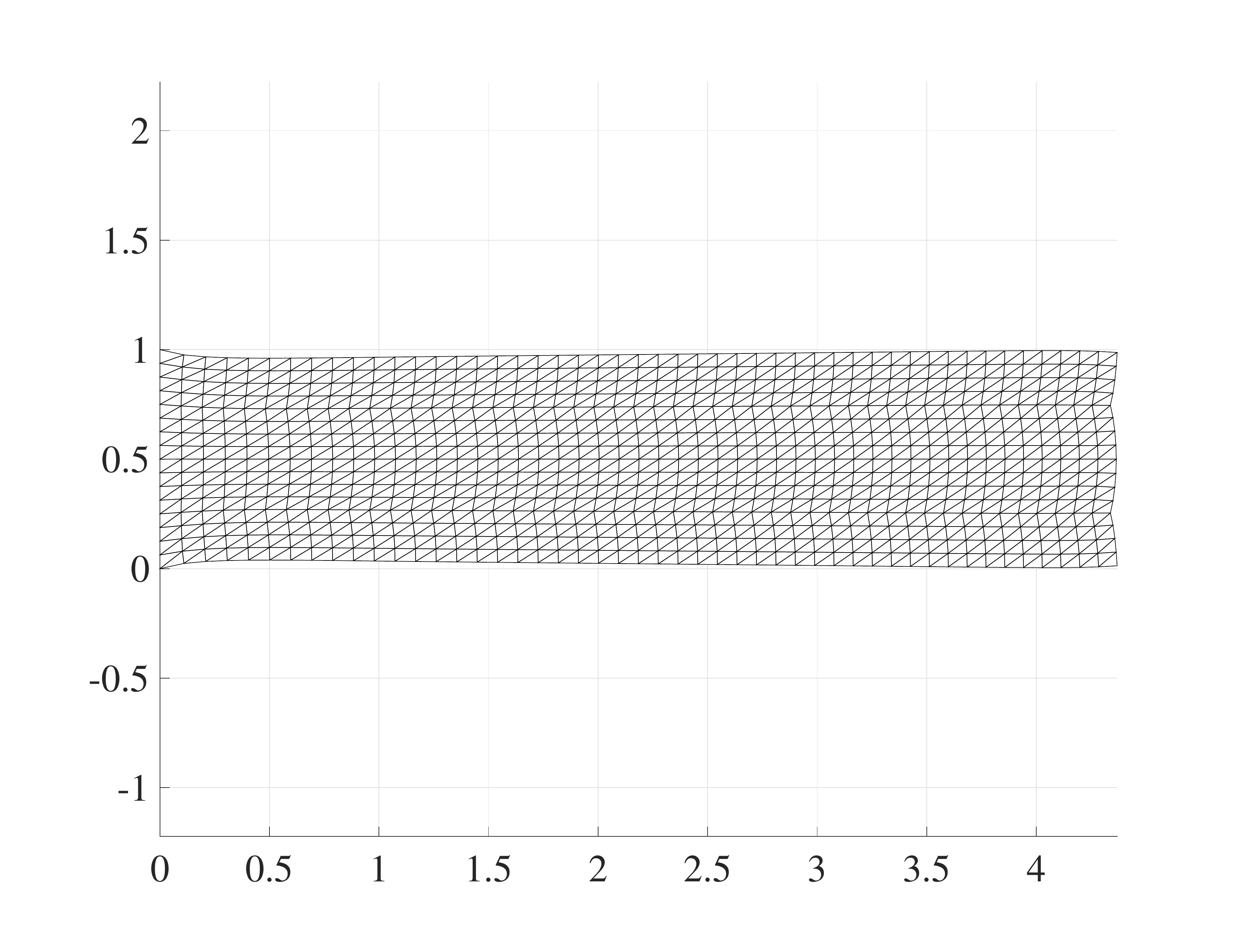}
\caption{Deformation of pulled bulk material with trusses.\label{pulledtruss}}
\end{figure}
\begin{figure}
\centering
\includegraphics[width=0.7\linewidth]{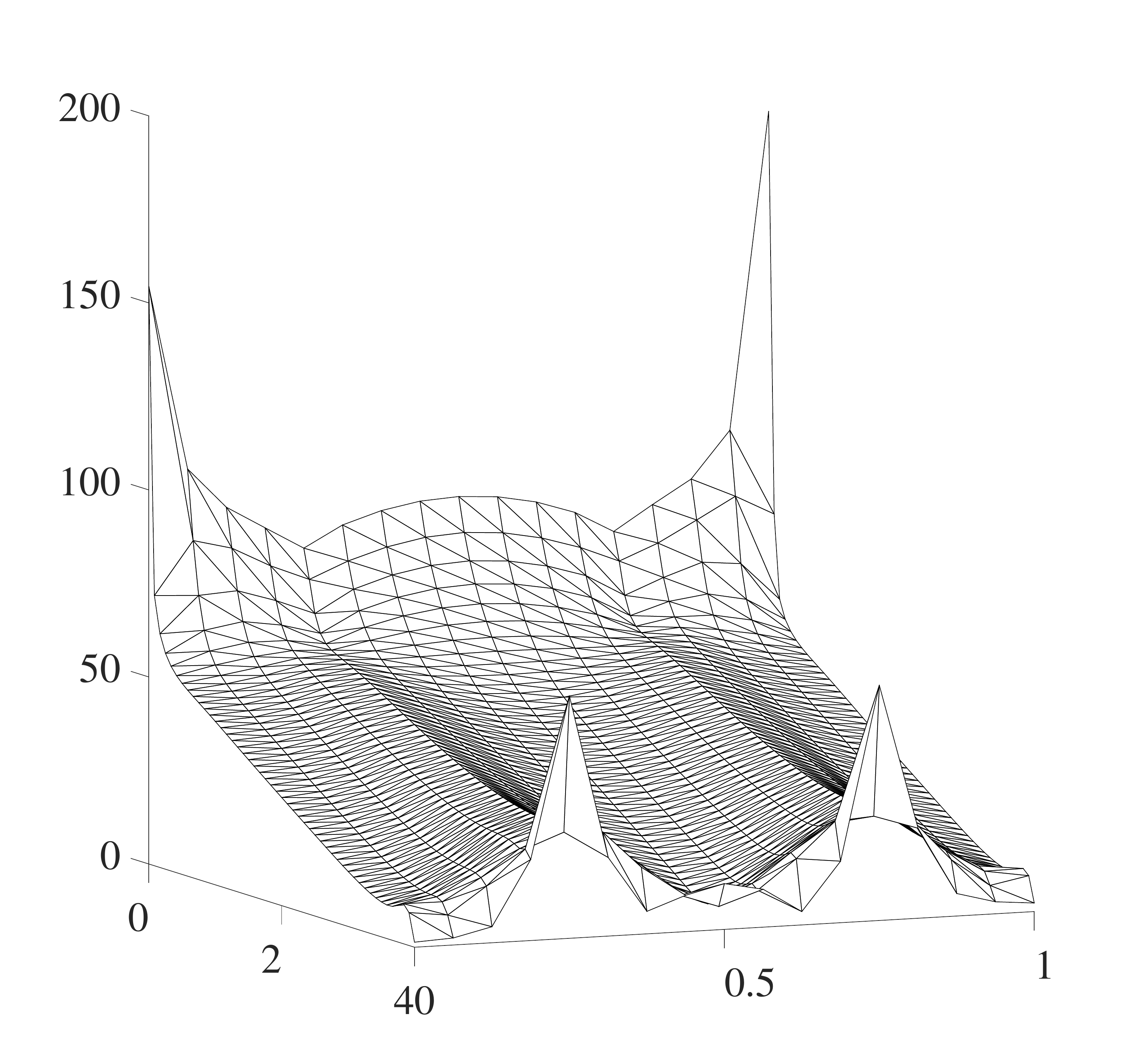}
\caption{Norm of the stresses; pulled bulk material with trusses.\label{pulledtrussmises}}
\end{figure}

\clearpage
\bibliographystyle{abbrv}
\bibliography{CutElasticity}

\end{document}